\documentclass[11pt,letterpaper]{amsart}
\usepackage{amsmath, amssymb, amsfonts, verbatim, graphicx, amsthm, tikz-cd, mathrsfs, color, comment, fancyhdr, amsrefs}
\usepackage{enumerate}
\raggedbottom
\usepackage{url}

\usepackage[foot]{amsaddr}

\allowdisplaybreaks[1]

\usepackage[sc]{mathpazo}
\linespread{1.5}         
\usepackage[T1]{fontenc}

\newcommand{\vertiii}[1]{{\left\vert\kern-0.25ex\left\vert\kern-0.25ex\left\vert #1
		\right\vert\kern-0.25ex\right\vert\kern-0.25ex\right\vert}}

\theoremstyle{plain}
\newtheorem{Thm}{Theorem}[section]
\newtheorem{Prop}[Thm]{Proposition}
\newtheorem{Lem}[Thm]{Lemma}
\newtheorem{Cor}[Thm]{Corollary}

\newtheorem*{JK}{Jewett-Krieger Theorem}

\theoremstyle{definition}
\newtheorem{Def}[Thm]{Definition}
\newtheorem{Not}[Thm]{Notation}

\theoremstyle{remark}
\newtheorem{Rmk}[Thm]{Remark}
\newtheorem{Ex}[Thm]{Example}

\renewcommand{\epsilon}{\varepsilon}

\usepackage{setspace}

\usepackage{hyperref}

\title{Spatial-Temporal Differentiation Theorems}

\author{I. Assani$^1$, A. Young$^2$}
\address{University of North Carolina at Chapel Hill}

\email{$^1$assani@email.unc.edu, $^2$aidanjy@live.unc.edu}
\subjclass[2020]{Primary 37A05, Secondary 37B10}
\begin{document}
	
	\maketitle
	
	\begin{abstract}
		Let $(X, \mathcal{B}, \mu, T)$ be a dynamical system where $X$ is a compact metric space with Borel $\sigma$-algebra $\mathcal{B}$, and $\mu$ is a probability measure that is ergodic with respect to the homeomorphism $T : X \to X$. We study the following differentiation problem: Given $f \in C(X)$ and $F_k \in \mathcal{B}$, where $\mu(F_k) > 0$ and $\mu(F_k) \to 0$, when can we say that
		$$\lim_{k \to \infty} \frac{\int_{F_k} \left( \frac{1}{k} \sum_{i = 0}^{k - 1} T^i f \right) \mathrm{d} \mu}{\mu(F_k)} = \int f \mathrm{d} \mu?$$
	\end{abstract}
	
	Let $(X, \mathcal{B}, \mu, T)$ be an ergodic topological dynamical system, where $X$ is a compact metric space, $\mathcal{B}$ is  the Borel $\sigma$-algebra of $X$, and $T : X \to X$ a homeomorphism that is ergodic with respect to the probability measure $\mu$. We consider \emph{spatial-temporal differentiation problems} of the type
	$$\lim_{k \to \infty} \frac{1}{\mu(F_k)} \int_{F_k} \left( \frac{1}{k} \sum_{i = 0}^{k - 1} T^i f \right) \mathrm{d} \mu ,$$
	where $(F_k)_{k = 1}^\infty$ is a sequence of measurable sets $F_k \in \mathcal{B}$ with positive measure $\mu(F_k) > 0$; specifically, we consider questions of when this limit exists, and when it exists, what that limit is for $f \in L^\infty(X, \mu)$.
	
	Before proceeding, we pause to distinguish these problems from two other kinds of differentiation problems which we will call temporal and spatial differentiation problems. A temporal differentiation problem might look like $\lim_{k \to \infty} \frac{1}{k} \sum_{i = 0}^{k - 1} \int T^i f \mathrm{d} \mu$, and a spatial differentiation problem might look like $\lim_{k \to \infty} \frac{1}{\mu(F_k)} \int_{F_k} f \mathrm{d} \mu$, where $F_k$ are sets of positive measure. A temporal differentiation problem, at least one of this form, would be trivial so long as $T$ is measure-preserving, and results exist regarding spatial differentiation problems (e.g. the Lebesgue Differentiation Theorem \cite[3.21]{FollandReal}, Fundamental Theorem of Calculus). Our problem, however, fits in neither of these bins, except in trivial cases, and these differentiation problems might be called "spatial-temporal" differentiation problems.
	
	A spatial-temporal differentiation problem \linebreak$\left( \frac{1}{\mu(F_k)} \int_{F_k} \left( \frac{1}{k} \sum_{i = 0}^{k - 1} T^i f \right) \mathrm{d} \mu \right)_{k = 1}^\infty$ hinges on three parameters: the dynamical system $(X, \mathcal{B}, \mu, T)$, the sequence $(F_k)_{k = 1}^\infty$ of measurable sets, and the function $f \in L^\infty(X, \mu)$. For the most part, the questions we consider in this article can be understood as "fixing" two of these parameters and investigating what can be said about the convergence properties of the differentiation when the remaining parameter is allowed to "vary".
	
	The paper is organized as follows:
	\begin{enumerate}
		\item In Section \ref{Topological STDs}, we consider certain functions which behave particularly well with respect to these differentiations, called uniform functions, and analyze them with respect to these spatial-temporal differentiations. We pay special attention to topological dynamical systems and how these spatial-temporal differentiations interact with unique ergodicity and uniformity.
		\item In Section \ref{Non-expansive}, we consider a non-expansive  topological dynamical system, and consider spatial-temporal differentiations along certain random nested sequences of subsets, deriving probabilistic results.
		\item In Section \ref{Lipschitz}, we consider instead a broader class of Lipschitz maps, and differentiate along randomly chosen sequences of sets; in particular, we derive probabilistic results about spatial-temporal differentiations along random sequences of cylinders in a subshift, as well as find certain pathological counterexamples.
		\item In Section \ref{Bernoulli shifts and probability}, we turn to study differentiations along random cylinders on Bernoulli shifts, but using a more probabilistic set of tools different from those we employed in the second section. We then use these techniques to consider a different problem of random cylinders, where we allow the cylinders at different steps to have different centers.
	\end{enumerate}

	We thank the referee for their detailed and through comments.

	\section{Uniform functions and differentiation theorems}\label{Topological STDs}
	
	In this section, we consider questions of the following forms: Given an appropriate system $(X, \mathcal{B}, T, \mu)$, are there $f \in L^\infty(X, \mu)$ for which \linebreak
	$\left( \frac{1}{\mu(F_k)} \int_{F_k} \left( \frac{1}{k} \sum_{i = 0}^{k - 1} T^i f \right) \mathrm{d} \mu \right)_{k = 1}^\infty$ converges for \emph{all} choices of $(F_k)_{k = 1}^\infty$? On the other hand, are there restrictions we can place on $(X, \mathcal{B}, \mu, T)$ to ensure that $\left( \frac{1}{\mu(F_k)} \int_{F_k} \left( \frac{1}{k} \sum_{i = 0}^{k - 1} T^i f \right) \mathrm{d} \mu \right)_{k = 1}^\infty$ converges for \emph{all} choices of $(F_k)_{k = 1}^\infty$ \emph{and} all $f \in C(X)$? The answer to the former question will be centered around the notion of a uniform function (defined below), and the answer to the latter question will be centered around unique ergodicity.
	
	Let $X$ be a compact metrizable space with Borel $\sigma$-algebra $\mathcal{B}$, and let $T : X \to X$ be a homeomorphism. Then $(X, T)$ is uniquely ergodic iff the sequence $ \left( \frac{1}{k} \sum_{i = 0}^{k - 1} T^i f \right)_{k = 1}^\infty$ converges in $C(X)$ to a constant function for all $f \in C(X)$ \cite[Theorem 10.6]{EisnerOperators}, and when this  happens, the sequence converges to $\int f \mathrm{d} \mu$, where $\mu$ is the unique ergodic $T$-invariant Borel probability measure. Thus if $(F_k)_{k = 1}^\infty$ is \emph{any} sequence of measurable sets of positive measure, then $\frac{1}{k} \sum_{i = 0}^{k - 1} \alpha_{F_k} \left( T^i f \right) \to \int f \mathrm{d} \mu$ for all $f \in C(X)$, since $\alpha_{F_k}$ is a bounded functional on $L^{\infty}(X, \mu)$. Fix $\epsilon > 0$, and choose $K \in \mathbb{N}$ such that
	$$k \geq K \Rightarrow \left\| \int f \mathrm{d} \mu - \frac{1}{k} \sum_{i = 0}^{k - 1} T^i f \right\|_\infty \leq \epsilon.$$
	Then if $k \geq K$, we have
	\begin{align*}
		\left| \int f \mathrm{d} \mu - \frac{1}{k} \sum_{i = 0}^{k - 1} \alpha_{F_k} \left( T^i f \right) \right|	& = \left| \alpha_{F_k} \left( \int f \mathrm{d} \mu - \frac{1}{k} \sum_{i = 0}^{k - 1} T^i f \right) \right| \\
		& \leq \left\| \int f \mathrm{d} \mu - \frac{1}{k} \sum_{i = 0}^{k - 1} T^i f \right\|_\infty \\
		& \leq \epsilon .
	\end{align*}
	
	More generally, if we have any dynamical system $(Y, \mathcal{A}, \nu, S)$, we can call a function $g \in L^\infty(Y, \nu)$ \emph{uniform} if $\frac{1}{k} \sum_{i = 0}^{k - 1} S^i g \to \int g \mathrm{d} \nu$ in $L^\infty$. Let $\mathscr{U}(Y, \mathcal{A}, \nu, S) \subseteq L^\infty (Y, \nu)$ denote the space of all uniform functions on $(Y, \mathcal{A}, \nu, S)$. If $g$ is uniform, then for any sequence $(G_k)_{k = 1}^\infty$ of measurable sets of positive measure, we have
	$$\frac{1}{k} \sum_{i = 0}^{k - 1} \frac{1}{\nu(G_k)} \int_{G_k} S^i g \mathrm{d} \nu \to \int g \mathrm{d} \nu ,$$
	meaning that essentially any differentiation problem of the type that interests us will behave exceptionally well for that $g$.
	
	Whenever $(X, T)$ is a uniquely ergodic system, we have \linebreak $C(X) \subseteq \mathscr{U}(X, \mathcal{B}, \mu, T)$, since
	$$\left\| \int f \mathrm{d} \mu - \frac{1}{k} \sum_{i = 0}^{k - 1} T^i f \right\|_\infty \leq \left\| \int f \mathrm{d} \mu - \frac{1}{k} \sum_{i = 0}^{k - 1} T^i f \right\|_{C(X)} .$$
	
	We collect here a few results about some more general differentiation problems. We first demonstrate a general characterization theorem for uniform functions.
	
	\begin{Thm}\label{Uniformity and STDs}
		Let $(Y, \mathcal{A}, \nu, S)$ be an ergodic dynamical system, and let $g \in L^\infty (Y, \nu)$. Then $g$ is uniform if and only if for all sequences $(G_k)_{k = 1}^\infty$ in $\mathcal{A}$ of measurable sets of positive measure,
		$$\frac{1}{k} \sum_{i = 0}^{k - 1} \frac{1}{\nu(G_k)} \int_{G_k} S^i g \mathrm{d} \nu \to \int g \mathrm{d} \nu .$$
	\end{Thm}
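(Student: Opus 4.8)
The plan is to reduce both directions to a single observation: the spatial averaging functionals, written as in the text as $\alpha_G(h) = \frac{1}{\nu(G)}\int_G h \, \mathrm{d}\nu$, commute with the temporal averages. Abbreviating $A_k g := \frac{1}{k}\sum_{i=0}^{k-1} S^i g$, linearity of the integral gives
$$\frac{1}{k}\sum_{i=0}^{k-1} \frac{1}{\nu(G_k)}\int_{G_k} S^i g \, \mathrm{d}\nu = \alpha_{G_k}(A_k g),$$
so the condition in the theorem says exactly that $\alpha_{G_k}(A_k g) \to \int g \, \mathrm{d}\nu$ for every sequence $(G_k)$, while uniformity says exactly that $\left\| A_k g - \int g \, \mathrm{d}\nu \right\|_\infty \to 0$. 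Replacing $g$ by $g - \int g \, \mathrm{d}\nu$ changes neither condition (constants are $S$-invariant and fixed by every $\alpha_G$), so I may assume $\int g \, \mathrm{d}\nu = 0$ and must prove: $\|A_k g\|_\infty \to 0$ if and only if $\alpha_{G_k}(A_k g) \to 0$ for all $(G_k)$. In this language the theorem amounts to the fact that $\sup_{\nu(G)>0}|\alpha_G(h)|$ both controls and is controlled by $\|h\|_\infty$, applied to $h = A_k g$.

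The forward direction is immediate and mirrors the computation already carried out for $C(X)$: since each $\alpha_{G_k}$ has operator norm at most $1$ on $L^\infty$, uniformity gives $|\alpha_{G_k}(A_k g)| \le \|A_k g\|_\infty \to 0$, uniformly over the choice of sets, so convergence holds for every sequence $(G_k)$.

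For the backward direction I argue the contrapositive, and this is where the only real work lies. Suppose $g$ is not uniform, so there are $\delta > 0$ and a subsequence $(k_j)$ with $\|A_{k_j} g\|_\infty \ge \delta$. The main obstacle is to pass from this essential-supremum lower bound to a genuine set $G$ on which the average $\alpha_G(A_{k_j}g)$ is large; in the complex-valued case, phase cancellation inside $G$ could a priori destroy $\alpha_G(A_{k_j}g)$ even where $|A_{k_j}g|$ is large. I resolve this by splitting into real and imaginary parts: since $|z| \le |\mathrm{Re}\,z| + |\mathrm{Im}\,z|$, the bound $\|A_{k_j}g\|_\infty \ge \delta$ forces at least one of $\operatorname{ess\,sup}(\pm\mathrm{Re}\,A_{k_j}g)$ or $\operatorname{ess\,sup}(\pm\mathrm{Im}\,A_{k_j}g)$ to be at least $\delta/2$. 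Taking $G_{k_j}$ to be the corresponding super-level set — for instance $G_{k_j} = \{\,\mathrm{Re}\,A_{k_j}g > \delta/4\,\}$ when it is the first quantity — yields a set of positive measure on which a real linear part of $A_{k_j}g$ exceeds $\delta/4$, whence $|\alpha_{G_{k_j}}(A_{k_j}g)| \ge \delta/4$. Defining $G_k = Y$ for the remaining indices, the sequence $(G_k)$ then satisfies $|\alpha_{G_{k_j}}(A_{k_j}g)| \ge \delta/4$ for all $j$, so $\alpha_{G_k}(A_k g) \not\to 0 = \int g \, \mathrm{d}\nu$, contradicting the hypothesis and completing the contrapositive.

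I expect the extraction of a good set from the $L^\infty$ lower bound to be the only genuine point; everything else is the commuting identity plus the triangle inequality. A cleaner alternative would be to isolate once and for all the comparison $\tfrac12\|h\|_\infty \le \sup_{\nu(G)>0}|\alpha_G(h)| \le \|h\|_\infty$ for $h \in L^\infty$ (the lower bound being exactly the real/imaginary-part argument above, with the sharp constant $1$ recoverable via a density-point and phase-sector refinement if desired), and then apply it to $h = A_k g$; I would likely present the proof this way so that the two implications read symmetrically. It is worth noting that ergodicity is not invoked beyond fixing the value $\int g \, \mathrm{d}\nu$ of the limit in the definition of a uniform function.
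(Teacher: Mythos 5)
Your proposal is correct and takes essentially the same route as the paper's own proof: the forward direction uses that each $\alpha_{G_k}$ is a norm-one functional on $L^\infty$, and the backward direction argues the contrapositive by splitting the averages into real and imaginary parts, extracting a super-level set of positive measure at the indices where the $L^\infty$ norm stays bounded away from zero, and setting $G_k = Y$ at the remaining indices. The only cosmetic differences (normalizing $\int g \, \mathrm{d}\nu = 0$, and letting the relevant part and sign vary with the index rather than pigeonholing to a single case as the paper does) do not change the argument.
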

	
	\begin{proof}
		$(\Rightarrow)$ If $g$ is uniform, then
		\begin{align*}
			\left| \int g \mathrm{d} \nu - \frac{1}{k} \sum_{i = 0}^{k - 1} \frac{1}{\nu(G_k)} \int_{G_k} S^i g \mathrm{d} \nu \right| & = \left| \frac{1}{\nu(G_k)} \int_{G_k} \left( \int g \mathrm{d} \nu - \frac{1}{k} \sum_{i = 0}^{k - 1} S^i g \right) \mathrm{d} \nu \right| \\
			& \leq \frac{1}{\nu(G_k)} \int_{G_k} \left\| \int g \mathrm{d} \nu - \frac{1}{k} \sum_{i = 0}^{k - 1} S^i g \right\|_\infty \\
			& = \left\| \int g \mathrm{d} \nu - \frac{1}{k} \sum_{i = 0}^{k - 1} S^i g \right\|_\infty \\
			& \to 0 .
		\end{align*}
		
		$(\Leftarrow)$ Suppose that $g$ is not uniform, and set $h_k = \int g \mathrm{d} \nu - \frac{1}{k} \sum_{i = 0}^{k - 1} S^i g $. Then $\limsup_{k \to \infty} \left\| h_k \right\|_\infty > 0$. Breaking $h_k$ into its real part $h_k^{\textrm{Re}}$ and imaginary part $h_k^{\textrm{Im}}$ tells us that either $\limsup_{k \to \infty} \left\| h_k^{\textrm{Re}} \right\|_\infty > 0$, or \linebreak$\limsup_{k \to \infty} \left\| h_k^{\textrm{Im}} \right\|_\infty > 0$. Suppose without loss of generality that \linebreak$\limsup_{k \to \infty} \left\| h_k^{\textrm{Re}} \right\|_\infty > 0$. Then at least one of the inequalities
		$$\nu \left( \left\{ y \in Y : h_{k}^{\textrm{Re}}(y) \geq \frac{\epsilon_0}{2} \right\} \right) > 0, \; \nu \left( \left\{ y \in Y : h_{k}^{\textrm{Re}}(y) \leq - \frac{\epsilon_0}{2} \right\} \right) > 0$$
		attains for infinitely many $k \in \mathbb{N}$. Assume without loss of generality that $I = \left\{ k \in \mathbb{N} : \nu \left( \left\{ y \in Y : h_{k}^{\textrm{Re}}(y) \geq \frac{\epsilon_0}{2} \right\} \right) > 0 \right\}$ is an infinite set.
		
		Construct a sequence $\left( G_k \right)_{k = 1}^\infty$ by letting $G_k = \left\{ y \in Y : h_{k}^{\textrm{Re}}(y) \geq \frac{\epsilon_0}{2} \right\}$ for all $k \in I$, and $G_k = Y$ for $k \in \mathbb{N} \setminus I$. Then if $k \in I$, then
		\begin{align*}
			\left| \frac{1}{\nu(G_k)} \int_{G_k} h_k \mathrm{d} \nu \right|	& \geq \left| \frac{1}{\nu(G_k)} \int_{G_k} h_k^{\textrm{Re}} \mathrm{d} \nu \right| \\
			& = \frac{1}{\nu(G_k)} \int_{G_k} h_k^{\textrm{Re}} \mathrm{d} \nu \\
			& \geq \frac{1}{\nu(G_k)} \int_{G_k} \frac{\epsilon_0}{2} \mathrm{d} \nu \\
			& = \frac{\epsilon_0}{2} .
		\end{align*}
		Therefore, there exist infinitely many $k \in \mathbb{N}$ such that
		$$\left| \int g \mathrm{d} \nu - \frac{1}{k} \sum_{i = 0}^{k - 1} \frac{1}{\nu(G_k)} \int_{G_k} S^i g \mathrm{d} \nu \right| = \left| \frac{1}{\nu(G_k)} \int_{G_k} h_k \mathrm{d} \nu \right| \geq \frac{\epsilon_0}{2},$$
		meaning that $\left| \int g \mathrm{d} \nu - \frac{1}{k} \sum_{i = 0}^{k - 1} \frac{1}{\nu(G_k)} \int_{G_k} S^i g \mathrm{d} \nu \right| \not \to 0$.
	\end{proof}

	Because we will so frequently be considering averages of functions over sets of positive measures, it will benefit us to introduce the following notation.
	
	\begin{Not}\label{Defining alpha}
	Let $(X, \mathcal{B}, \mu)$ be a probability space. When $F \in \mathcal{B}$ is a set of positive measure $\mu(F) > 0$, we denote by $\alpha_F$ the state on $L^\infty (X, \mu)$ given by
		\begin{align*}
			\alpha_F (f)	& : = \frac{1}{\mu(F)} \int_F f \mathrm{d} \mu .
		\end{align*}
	\end{Not}
	
	Theorem \ref{Uniformity and STDs} hints at why we consider spatial-temporal differentiations of $L^\infty$ functions instead of, for example, differentiations of $L^p$ functions for $p \in [1, \infty)$. One might plausibly propose that if we have a uniquely ergodic dynamical system $(X, \mathcal{B}, \mu, T)$, then we can observe that for all $f \in C(X)$, all spatial-temporal differentiations converge to $\int f \mathrm{d} \mu$. We could then try to extend this convergence to all of $L^1(X, \mu)$, since $C(X)$ is $L^1$-dense in $L^1(X, \mu)$. However, we know that a uniquely ergodic dynamical system can still have non-uniform $L^\infty$ functions (in fact, any ergodic dynamical system over a non-atomic standard probability space will have them, as seen in Proposition \ref{Existence of non-uniform functions}), so this cannot be right. The catch is that for measurable $F$ of nonzero measure, the functional $\alpha_F : f \mapsto \frac{1}{\mu(F)} \int_F f \mathrm{d} \mu$ is of norm $1$ with respect to $L^\infty$, but the same can't be said relative to $L^p$ for $p \in [1, \infty)$. As such, the "natural" choice of function for a spatial-temporal differentiation is an $L^\infty$ function.
	
	A similarly plausible but misguided attempt to establish convergence results of spatial-temporal differentiations for all $f \in L^\infty(X, \mu)$ could be through the concept of uniform sets. In \cite[Theorem 1]{HanselRaoult}, it was established that if $\mathcal{B}$ is separable with respect to the metric $(A, B) \mapsto \mu(A \Delta B)$, then there exists a dense $T$-invariant subalgebra $\mathcal{B}' \subseteq \mathcal{B}$ of sets such that $\chi_B$ is uniform for all $B \in \mathcal{B}'$. Again, one might propose that we could use a density argument to extend convergence results on spatial-temporal differentiations to functions $\chi_A$ for all $A \in \mathcal{B} \supseteq \mathcal{B}'$. But again, Theorem \ref{Uniformity and STDs} tells us that this would be tantamount to proving that all $L^\infty$ functions are uniform, and we know that there can exist non-uniform $L^\infty$ functions.
	
	Other results are possible regarding topological dynamical systems, as we show below.
	
	\begin{Lem}
		Let $f \in L^\infty(X, \mu)$ be a nonnegative function, where $(X, \mathcal{B}, \mu, T)$ is a dynamical system. Then the sequence $\left( \left\| \frac{1}{k} \sum_{i = 0}^{k - 1} S^i f \right\|_\infty \right)_{k = 1}^\infty$ is convergent, and
		$$\lim_{k \to \infty} \left\| \frac{1}{k} \sum_{i = 0}^{k - 1} S^i f \right\|_\infty = \inf_{k \in \mathbb{N}}  \left\| \frac{1}{k} \sum_{i = 0}^{k - 1} S^i f \right\|_\infty .$$
	\end{Lem}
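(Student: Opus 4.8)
The plan is to recognize this as a direct application of Fekete's subadditivity lemma. Writing $A_k = \sum_{i=0}^{k-1} S^i f$ and $M_k = \left\| A_k \right\|_\infty$, the quantity under study is exactly $M_k/k$, so it suffices to prove that the sequence $(M_k)_{k=1}^\infty$ is subadditive, i.e. $M_{k+l} \leq M_k + M_l$ for all $k, l \in \mathbb{N}$; Fekete's lemma then guarantees that $M_k/k$ converges and that its limit is $\inf_k M_k/k$.

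To obtain subadditivity I would first split the partial sum using the semigroup property of the composition operator, $A_{k+l} = \sum_{i=0}^{k-1} S^i f + S^k \sum_{i=0}^{l-1} S^i f = A_k + S^k A_l$. The key step, and the only place where the dynamics enter, is that $S$ preserves $\mu$, so each iterate $S^k$ acts as an $L^\infty$-isometry: from $A_l \leq M_l$ a.e. one gets $S^k A_l \leq M_l$ a.e., since the exceptional set pulls back under the measure-preserving $S^k$ to a null set. Combining this with $A_k \leq M_k$ a.e. (here $f \geq 0$ lets us work directly with the function rather than its modulus, and also ensures $A_k \geq 0$), we add the two pointwise bounds to get $A_{k+l} \leq M_k + M_l$ a.e., whence $M_{k+l} = \operatorname{ess\,sup} A_{k+l} \leq M_k + M_l$. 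I expect this measure-preservation step to be the main conceptual point, though it is routine; note that nonnegativity of $f$ is not strictly necessary, as the same subadditivity follows for arbitrary $f$ from $\left| A_{k+l} \right| \leq \left| A_k \right| + \left| S^k A_l \right|$ and the triangle inequality for $\left\| \cdot \right\|_\infty$.

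Finally I would invoke Fekete's lemma: for any subadditive sequence $(M_k)$ the limit $\lim_{k \to \infty} M_k/k$ exists and equals $\inf_k M_k/k$, and since $M_k \geq 0$ this infimum lies in $[0, \infty)$. Unwinding the notation yields $\lim_{k \to \infty} \left\| \frac{1}{k} \sum_{i=0}^{k-1} S^i f \right\|_\infty = \inf_{k \in \mathbb{N}} \left\| \frac{1}{k} \sum_{i=0}^{k-1} S^i f \right\|_\infty$, which is precisely the claimed convergence together with the identification of the limit.
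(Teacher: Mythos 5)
Your proposal is correct and follows essentially the same route as the paper: both establish subadditivity of $M_k = \left\| \sum_{i=0}^{k-1} S^i f \right\|_\infty$ by splitting $A_{k+l} = A_k + S^k A_l$ and using that composition with the measure-preserving $S^k$ leaves the essential supremum unchanged, then invoke Fekete's Subadditivity Lemma. Your pointwise-a.e.\ phrasing of the isometry step and your remark that nonnegativity is not actually needed are only cosmetic differences from the paper's triangle-inequality formulation.
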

	
	\begin{proof}
		Let $a_k = \left\| \sum_{i = 0}^{k - 1} T^i f \right\|_\infty$. Then the sequence $(a_k)_{k = 1}^\infty$ is subadditive. This follows since if $k, \ell \in \mathbb{N}$, then
		\begin{align*}
			a_{k + \ell}	& = \left\| \sum_{i =  0}^{k + \ell - 1} T^i f \right\|_\infty \\
			& \leq \left\| \sum_{i = 0}^{k - 1} T^i f \right\|_\infty + \left\| \sum_{i = k}^{k + \ell - 1} T^i g \right\|_\infty \\
			& = \left\| \sum_{i = 0}^{k - 1} T^i f \right\|_\infty + \left\| T^k \sum_{i = 0}^{\ell - 1} T^i f \right\|_\infty \\
			& = \left\| \sum_{i = 0}^{k - 1} T^i f \right\|_\infty + \left\| \sum_{i = 0}^{\ell - 1} T^i f \right\|_\infty \\
			& = a_k + a_\ell .
		\end{align*}
		The result then follows from the Subadditivity Lemma.
	\end{proof}
	
	\begin{Def}
		For nonnegative $f \in L^\infty(X, \mu)$, set
		$$\Gamma(f) : = \lim_{k \to \infty} \left\| \frac{1}{k} \sum_{i = 0}^{k - 1} T^i f \right\|_\infty .$$ We call this value $\Gamma(f)$ the \emph{gauge} of $f$.
	\end{Def}
	
	This $\Gamma(f)$ satisfies the inequality $\Gamma(f) \geq \int f \mathrm{d} \mu$, since
	\begin{align*}
		\frac{1}{k} \sum_{i = 0}^{k - 1} T^i f	& \leq \left\| \frac{1}{k} \sum_{i = 0}^{k - 1} T^i f \right\|_\infty \\
		\Rightarrow \int f \mathrm{d} \mu = \int \frac{1}{k} \sum_{i = 0}^{k - 1} T^i f \mathrm{d} \mu	& \leq \int \left\| \frac{1}{k} \sum_{i = 0}^{k - 1} T^i f \right\|_\infty \mathrm{d} \mu \\
		& = \left\| \frac{1}{k} \sum_{i = 0}^{k - 1} T^i f \right\|_\infty \\
		\Rightarrow \int f \mathrm{d} \mu	& \leq \inf_{k \in \mathbb{N}} \int \left\| \frac{1}{k} \sum_{i = 0}^{k - 1} T^i f \right\|_\infty \\
		& = \Gamma(f) .
	\end{align*}
	
	\begin{Def}
		Let $X$ be a compact metric space, and let $C_{\mathbb{R}}(X)$ denote the (real) space of real-valued continuous functions on $X$ endowed with the uniform norm $\| \cdot \|_{C(X)}$. Let $T : X  \to X$ be a continuous homeomorphism, and let $\mathcal{M}_T$ denote the family of all $T$-invariant Borel probability measures on $X$. A measure $\mu \in \mathcal{M}_T(X)$ is called \emph{$f$-maximizing} for some $f \in C_{\mathbb{R}}(X)$ if $\int f \mathrm{d} \mu = \sup_{\nu \in \mathcal{M}_T} \int f \mathrm{d} \nu$. We denote by $\mathcal{M}_{\mathrm{max}}(f)$ the space of all $f$-maximizing measures.
	\end{Def}
	
	The definition of maximizing measures is due to Jenkinson \cite[Definition 2.3]{Jenkinson}. The definition is topological in nature, in the sense that it is defined with reference to a homeomorphism on a compact metric space prior to any other measure that metric space might possess. A result of Jenkinson \cite[Proposition 2.4]{Jenkinson} tells us that for every $f \in C_{\mathbb{R}}(X)$, we have
	\begin{enumerate}
		\item $\mathcal{M}_{\mathrm{max}}(f) \neq \emptyset$,
		\item  $\mathcal{M}_{\mathrm{max}}(f)$ is a compact metrizable simplex, and
		\item the extreme points of  $\mathcal{M}_{\mathrm{max}}(f)$ are exactly the ergodic $f$-maximizing measures. In particular, every $f \in C_{\mathbb{R}}(X)$ admits an ergodic $f$-maximizing measure.
	\end{enumerate}
	
	For every $f \in C_{\mathbb{R}}(X)$, let $\mu_f$ denote an ergodic maximizing measure for $f$. We claim that $\Gamma(f) \leq \int f \mathrm{d} \mu_f$. To prove this, we note that \linebreak$\left\| \frac{1}{k} \sum_{i = 0}^{k - 1} T^i f \right\|_\infty \leq \max_{x \in X} \frac{1}{k} \sum_{i = 0}^{k - 1} T^i f (x)$, where the maximum exists because $X$ is compact and $f \in C_{\mathbb{R}}(X)$ is continuous. Choose $x_k \in X$ such that $\max_{x \in X} \frac{1}{k} \sum_{i = 0}^{k - 1} T^i f(x) = \frac{1}{k} \sum_{i = 0}^{k - 1} T^i f(x_k)$. Let $\delta_{x_k}$ denote the Borel point-mass probability measure
	$$\delta_{x_k}(A) = \begin{cases}
		1	& x_k \in A \\
		0	& x_k \not \in A
	\end{cases}$$
	Let $\mu_k = \frac{1}{k} \sum_{i = 0}^{k - 1} \delta_{T^i x_k}$, so that $\frac{1}{k} \sum_{i = 0}^{k - 1} T^i f (x_k) = \int f \mathrm{d} \mu_k$.
	
	Since the space of Borel probability measures on $X$ is compact in the weak* topology on $C(X)^*$, there exists a subsequence $(\mu_{k_n})_{n = 1}^\infty$ of $(\mu_k)_{k = 1}^\infty$ converging to a Borel probability measure $\mu'$. We claim that $\mu'$ is $T$-invariant, since if $g \in C(X)$, then
	\begin{align*}
		\left| \int T g \mathrm{d} \mu' - \int g \mathrm{d} \mu' \right|	& \leq \left| \int T g \mathrm{d} \mu' - \int  T g \mathrm{d} \mu_{x_{k_n}} \right| \\
		& + \left| \int T g \mathrm{d} \mu_{x_{k_n}} - \int g \mathrm{d} \mu_{x_{k_n}} \right| \\
		& + \left| \int g \mathrm{d} \mu_{x_{k_n}} - \int g \mathrm{d} \mu' \right| \\
		& \leq \left| \int T g \mathrm{d} \mu' - \int  T g \mathrm{d} \mu_{x_{k_n}} \right| \\
		& + \left| \frac{g \left( T^{k_n} x_{k_n} \right) - g(x_{k_n})}{k_n} \right| \\
		& + \left| \int g \mathrm{d} \mu_{x_{k_n}} - \int g \mathrm{d} \mu' \right| \\
		& \leq \left| \int T g \mathrm{d} \mu' - \int  T g \mathrm{d} \mu_{x_{k_n}} \right| \\
		& + \frac{2 \|g\|_{C(X)}}{k_n} \\
		& + \left| \int g \mathrm{d} \mu_{x_{k_n}} - \int g \mathrm{d} \mu' \right| \\
		& \to 0 .
	\end{align*}
	
	Therefore $\mu'$ is a $T$-invariant Borel probability measure on $X$ such that $\int f \mathrm{d} \mu' = \Gamma(f)$. But if $\mu_f$ is $f$-maximal, then
	$$\Gamma(f) = \int f \mathrm{d} \mu' \leq \int f \mathrm{d} \mu_f .$$ Under certain conditions, however, we can achieve equality here.
	
	\begin{Lem}\label{Assani Lemma 3}
		Let $(X, \mathcal{B}, \mu)$ be a probability space, where $X$ is a compact metric space with Borel $\sigma$-algebra on $X$ denoted by $\mathcal{B}$. Let $T : X \to X$ be a homeomorphism. If $\mu$ is strictly positive, and $f \in C_{\mathbb{R}}(X)$ is nonnegative, then
		$$\Gamma(f) = \int f \mathrm{d} \mu_f .$$
	\end{Lem}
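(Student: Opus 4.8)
The plan is to prove the reverse inequality $\Gamma(f) \ge \int f \, d\mu_f$, since the inequality $\Gamma(f) \le \int f \, d\mu_f$ has already been established in the discussion immediately preceding the lemma (via the weak* limit $\mu'$ of the empirical measures, which is $T$-invariant and hence dominated by the maximizing measure). The entire role of the strict-positivity hypothesis is to upgrade the essential supremum defining $\Gamma(f)$ to a genuine maximum over $X$, and this is where I expect the one substantive idea to live.

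First I would record the elementary observation that when $\mu$ is strictly positive, the $L^\infty(\mu)$ norm of any continuous real-valued function $g$ coincides with its uniform norm $\max_{x \in X} g(x)$. Indeed, for any $c < \max_{x} g(x)$ the set $\{g > c\}$ is nonempty and open, hence of positive $\mu$-measure, so $c$ cannot be an essential upper bound; combined with the trivial bound $\|g\|_\infty \le \max_{x} g(x)$ this forces equality. Since each average $\frac{1}{k}\sum_{i=0}^{k-1} T^i f$ is continuous (a finite sum of compositions of the continuous $f$ with the homeomorphism $T$) and nonnegative, I conclude
$$\Gamma(f) = \lim_{k \to \infty} \left\| \frac{1}{k}\sum_{i=0}^{k-1} T^i f \right\|_\infty = \lim_{k \to \infty} \max_{x \in X} \frac{1}{k}\sum_{i=0}^{k-1} f(T^i x).$$

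Next I would invoke Birkhoff's pointwise ergodic theorem for the ergodic maximizing measure $\mu_f$: the set of points $x$ with $\frac{1}{k}\sum_{i=0}^{k-1} f(T^i x) \to \int f \, d\mu_f$ has full $\mu_f$-measure, hence is nonempty, so such a point $x_0$ exists. For every $k$ the maximum over $X$ dominates the value at $x_0$, giving
$$\max_{x \in X} \frac{1}{k}\sum_{i=0}^{k-1} f(T^i x) \ge \frac{1}{k}\sum_{i=0}^{k-1} f(T^i x_0),$$
and letting $k \to \infty$ (the left-hand limit exists and equals $\Gamma(f)$ by the previous step) yields $\Gamma(f) \ge \int f \, d\mu_f$. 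Combined with the reverse inequality recalled above, this gives $\Gamma(f) = \int f \, d\mu_f$.

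The only genuinely delicate point is the identification of the $L^\infty(\mu)$ essential supremum with the pointwise maximum; everything else is the ergodic theorem and the already-available inequality. I would expect no further obstacles. It is worth noting that without strict positivity the argument breaks precisely at this step, because the essential supremum can be strictly smaller than the maximum when $\mu$ omits part of $X$, which is consistent with $\Gamma(f) \le \int f \, d\mu_f$ generally being a strict inequality.
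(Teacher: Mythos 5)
Your proof is correct, and it differs from the paper's in the one step that carries the substance. The first half --- using strict positivity to identify the $L^\infty(\mu)$ norm of a continuous function with its maximum over $X$ --- is exactly the paper's opening move, and like the paper you quote the inequality $\Gamma(f) \le \int f \mathrm{d}\mu_f$ from the discussion preceding the lemma rather than reproving it. Where you diverge is the reverse inequality $\int f \mathrm{d}\mu_f \le \Gamma(f)$: you invoke the Birkhoff ergodic theorem for the ergodic measure $\mu_f$, select a generic point $x_0$, bound $\max_{x \in X} \frac{1}{k}\sum_{i=0}^{k-1} f(T^i x) \ge \frac{1}{k}\sum_{i=0}^{k-1} f(T^i x_0)$, and pass to the limit. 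The paper avoids the ergodic theorem entirely: since $\mu_f$ is $T$-invariant, $\int f \mathrm{d}\mu_f = \int \frac{1}{k}\sum_{i=0}^{k-1} T^i f \,\mathrm{d}\mu_f \le \sup_{x \in X} \frac{1}{k}\sum_{i=0}^{k-1} T^i f(x)$ holds for every fixed $k$, and taking the infimum over $k$ finishes. The paper's step is more elementary and slightly more general --- it uses only the invariance of $\mu_f$, not its ergodicity, so it would apply verbatim to any maximizing measure and needs no pointwise limit theorem --- whereas your argument genuinely needs $\mu_f$ to be ergodic (which is legitimate here, since the paper defines $\mu_f$ via Jenkinson's existence result for ergodic maximizing measures). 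What your route buys is a concrete witness: it shows a single orbit already realizes the lower bound, which the paper's integral estimate leaves implicit. Both arguments are valid and of comparable length.
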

	
	\begin{proof}
		First, we claim that if $X$ is compact and $\mu$ is strictly positive, then $\left\| \frac{1}{k} \sum_{i = 0}^{k - 1} T^i f \right\|_\infty = \sup_{x \in X} \left| \frac{1}{k} \sum_{i = 0}^{k - 1} T^i f(x) \right|$. Assume for contradiction that $\left\| \frac{1}{k} \sum_{i = 0}^{k - 1} T^i f \right\|_\infty < \sup_{x \in X} \left| \frac{1}{k} \sum_{i = 0}^{k - 1} T^i f(x) \right|$. Then there exists $R > 0$ such that $R < \sup_{x \in X} \left| \frac{1}{k} \sum_{i = 0}^{k - 1} T^i f(x) \right|$ and
		$$\mu \left( \left\{ x \in X : \left| \frac{1}{k} \sum_{i = 0}^{k - 1} T^i f(x) \right| > R \right\} \right) = 0 .$$
		But that set is nonempty and open, so it must have positive measure, a contradiction.
		
		Therefore $\left\| \frac{1}{k} \sum_{i = 0}^{k - 1} T^i f \right\|_\infty = \sup_{x \in X} \left| \frac{1}{k} \sum_{i = 0}^{k - 1} T^i f(x) \right|$ for all $k \in \mathbb{N}$. However, we can bound $\int f \mathrm{d} \mu_f$ by
		\begin{align*}
			\int f \mathrm{d} \mu_f	& = \int \frac{1}{k} \sum_{i = 0}^{k - 1} T^i f \mathrm{d} \mu \\
			& \leq \sup_{x \in X} \frac{1}{k} \sum_{i = 0}^{k - 1} T^i f(x) \mathrm{d} \mu \\
			& = \left\| \frac{1}{k} \sum_{i = 0}^{k - 1} T^i f \mathrm{d} \mu \right\|_\infty \\
			\int f \mathrm{d} \mu_f	& \leq \inf_{k \in \mathbb{N}} \left\| \frac{1}{k} \sum_{i = 0}^{k - 1} T^i f \mathrm{d} \mu \right\|_\infty \\
			& = \Gamma(f) ,
		\end{align*}
		establishing the opposite inequality.
	\end{proof}
	
	\begin{Lem}\label{Assani Lemma 4}
		Suppose that $(X, \mathcal{B}, \mu, T)$ consists of a compact metric space $X$ with Borel $\sigma$-algebra $\mathcal{B}$ and a strictly positive probability measure $\mu$ that is ergodic with respect to a homeomorphism $T: X \to X$. Then the system $(X, T)$ is uniquely ergodic if and only if $\Gamma(f) = \int f \mathrm{d} \mu$ for all nonnegative $f \in C_{\mathbb{R}}(X)$.
	\end{Lem}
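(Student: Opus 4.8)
The plan is to prove the two implications separately. The forward direction is immediate from material already in this section, while the reverse direction is where the real content lies: it rests on Lemma \ref{Assani Lemma 3} together with a shifting argument that upgrades an inequality on nonnegative test functions to an equality of invariant measures.

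For the forward direction ($\Rightarrow$), I would assume $(X, T)$ is uniquely ergodic. As observed earlier, unique ergodicity yields $C(X) \subseteq \mathscr{U}(X, \mathcal{B}, \mu, T)$, so for each nonnegative $f \in C_{\mathbb{R}}(X)$ the ergodic averages $\frac{1}{k} \sum_{i = 0}^{k - 1} T^i f$ converge to the constant $\int f \mathrm{d}\mu$ in $L^\infty$. Passing to norms and using the reverse triangle inequality forces $\left\| \frac{1}{k} \sum_{i = 0}^{k - 1} T^i f \right\|_\infty \to \int f \mathrm{d}\mu$, so that $\Gamma(f) = \int f \mathrm{d}\mu$. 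Note that strict positivity is not needed here.

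For the reverse direction ($\Leftarrow$), the central tool is Lemma \ref{Assani Lemma 3}: since $\mu$ is strictly positive, for every nonnegative $f \in C_{\mathbb{R}}(X)$ we have $\Gamma(f) = \int f \mathrm{d}\mu_f = \sup_{\nu \in \mathcal{M}_T} \int f \mathrm{d}\nu$, the last equality holding because $\mu_f$ is $f$-maximizing. The hypothesis $\Gamma(f) = \int f \mathrm{d}\mu$ thus reads precisely as $\int f \mathrm{d}\nu \leq \int f \mathrm{d}\mu$ for every $T$-invariant Borel probability measure $\nu$ and every nonnegative $f \in C_{\mathbb{R}}(X)$. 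To conclude that $\mathcal{M}_T = \{\mu\}$, I would fix an arbitrary $\nu \in \mathcal{M}_T$ and an arbitrary $g \in C_{\mathbb{R}}(X)$, apply the inequality to the nonnegative function $g + \|g\|_{C(X)}$, and cancel the additive constant (which both probability measures integrate to the same value) to obtain $\int g \mathrm{d}\nu \leq \int g \mathrm{d}\mu$. Running the same argument with $-g$ in place of $g$ gives the reverse inequality, so $\int g \mathrm{d}\nu = \int g \mathrm{d}\mu$ for all $g \in C_{\mathbb{R}}(X)$, whence $\nu = \mu$ by the Riesz representation theorem.

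I expect the main obstacle to be interpretive rather than computational: the key realization is that the hypothesis should be rephrased as an inequality between invariant measures via the maximizing-measure identity $\Gamma(f) = \sup_{\nu \in \mathcal{M}_T} \int f \mathrm{d}\nu$ supplied by Lemma \ref{Assani Lemma 3}, after which the constant-shift trick promotes it from nonnegative test functions to all of $C_{\mathbb{R}}(X)$. The one point I would verify carefully is that strict positivity enters only through Lemma \ref{Assani Lemma 3}, and that the identification of $\int f \mathrm{d}\mu_f$ with the supremum over $\mathcal{M}_T$ is exactly the defining property of an $f$-maximizing measure.
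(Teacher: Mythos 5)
Your proof is correct, and its reverse direction takes a genuinely different route from the paper's. The paper proves that direction by contrapositive: assuming $(X,T)$ is not uniquely ergodic, it invokes a deep theorem of Jenkinson (Theorem 3.7 in that reference) to produce a continuous $f$ whose \emph{unique} maximizing measure is a second ergodic measure $\nu \neq \mu$; after shifting $f$ to be nonnegative, Lemma \ref{Assani Lemma 3} yields $\Gamma(f) = \int f \mathrm{d} \nu > \int f \mathrm{d} \mu$. You instead argue the implication directly: Lemma \ref{Assani Lemma 3} together with the defining property of a maximizing measure converts the hypothesis into the domination $\int f \mathrm{d} \nu \leq \int f \mathrm{d} \mu$ for every $\nu \in \mathcal{M}_T$ and every nonnegative $f \in C_{\mathbb{R}}(X)$, and then the constant-shift trick plus the Riesz representation theorem force $\nu = \mu$. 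Your route is more elementary, since its only nontrivial inputs are Lemma \ref{Assani Lemma 3} and the soft existence of ergodic maximizing measures (Jenkinson's Proposition 2.4), whereas the paper leans on the substantially harder result that any prescribed ergodic measure arises as the unique maximizing measure of some continuous function; what the paper's argument buys in exchange is an explicit witness function, though the later applications (Theorems \ref{Assani Theorem} and \ref{Pathological connected differentiation}) only use the statement of the lemma, so either proof serves them equally well. Your forward direction also differs mildly: the paper deduces it from Lemma \ref{Assani Lemma 3} via $\mu = \mu_f$, hence as written routes through strict positivity, while you obtain it directly from uniform convergence of ergodic averages and correctly observe that strict positivity is not needed for that half.
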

	
	\begin{proof}
		$(\Rightarrow)$ If $(X, T)$ is uniquely ergodic, then in particular $\mu = \mu_f$ for all nonnegative $f \in C_{\mathbb{R}}(X)$. Therefore by the previous lemma, we have
		$$\int f \mathrm{d} \mu = \int f \mathrm{d} \mu_f = \Gamma(f) .$$
		
		$(\Leftarrow)$ If $(X, T)$ is \emph{not} uniquely ergodic, then we know that $\mathcal{M}_T(X)$ is not a singleton, and thus contains another ergodic measure $\nu$. By a result of Jenkinson \cite[Theorem 3.7]{Jenkinson}, we know that there exists $f \in C(X)$ real-valued such that $\nu = \mu_f$ is the \emph{unique} $f$-maximizing measure. We may assume without loss of generality that $f$ is nonnegative, since otherwise we can replace $f$ with $\tilde{f} - \inf_{x \in X} f(x)$. Since we claimed that $\nu$ was the unique $f$-maximizing measure, we can conclude in particular that
		\begin{align*}
			\int f \mathrm{d} \mu	& < \int f \mathrm{d} \nu \\
			& = \int f \mathrm{d} \mu_f \\
			& = \Gamma(f) .
		\end{align*}
	\end{proof}
	
	\begin{Thm}\label{Assani Theorem}
		Suppose that $(X, \mathcal{B}, \mu, T)$ consists of a compact metric space $X$ with Borel $\sigma$-algebra $\mathcal{B}$ and a probability measure $\mu$ that is ergodic with respect to a homeomorphism $T: X \to X$. Then the following results are related by the implications (1)$\Rightarrow$(2)$\Rightarrow$(3). Further, if $\mu$ is strictly positive, then (3)$\Rightarrow$(1).
		\begin{enumerate}
			\item $(X, T)$ is uniquely ergodic.
			\item For every sequence of Borel-measurable sets $(F_k)_{k = 1}^\infty$ of positive measure, and for every $f \in C(X)$, the limit $\lim_{k \to \infty} \alpha_{F_k} \left( \frac{1}{k} \sum_{i = 0}^{k - 1} T^i f \right)$ exists and is equal to $\int f \mathrm{d} \mu$, where $\alpha_\cdot$ is as defined in Notation \ref{Defining alpha}.
			\item For every sequence of open sets $(U_k)_{k = 1}^\infty$ of positive measure, and for every $f \in C(X)$, the limit $\lim_{k \to \infty} \alpha_{U_k} \left( \frac{1}{k} \sum_{i = 0}^{k - 1} T^i f \right)$ exists and is equal to $\int f \mathrm{d} \mu$, where $\alpha_\cdot$ is as defined in Notation \ref{Defining alpha}.
		\end{enumerate}
	\end{Thm}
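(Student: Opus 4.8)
The plan is to treat the three implications separately, observing that $(1)\Rightarrow(2)$ and $(2)\Rightarrow(3)$ are essentially formal and that all the genuine content lies in $(3)\Rightarrow(1)$ under strict positivity.

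For $(1)\Rightarrow(2)$, I would invoke the computation already carried out at the start of this section together with the characterization of unique ergodicity recalled there: unique ergodicity is equivalent to uniform (in $C(X)$) convergence of the Cesàro averages $\frac{1}{k}\sum_{i=0}^{k-1} T^i f$ to $\int f\,\mathrm{d}\mu$ for every $f \in C(X)$, so every $f \in C(X)$ is uniform in the sense defined above, and Theorem \ref{Uniformity and STDs} then yields the stated convergence for every sequence $(F_k)$ of positive-measure sets. Equivalently, one uses that each $\alpha_{F_k}$ is a state (a norm-$1$ functional) on $L^\infty(X,\mu)$, so that $\left| \alpha_{F_k}\left(\int f\,\mathrm{d}\mu - \frac{1}{k}\sum_{i=0}^{k-1} T^i f\right) \right| \le \left\| \int f\,\mathrm{d}\mu - \frac{1}{k}\sum_{i=0}^{k-1} T^i f \right\|_\infty \to 0$. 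The implication $(2)\Rightarrow(3)$ is immediate, since an open set of positive measure is in particular a Borel-measurable set of positive measure; applying the hypothesis of $(2)$ to the subfamily of open sequences gives $(3)$.

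The substance is $(3)\Rightarrow(1)$, which I would prove by contraposition. Suppose $(X,T)$ is not uniquely ergodic; since $\mu$ is strictly positive, Lemma \ref{Assani Lemma 4} furnishes a nonnegative $f \in C_{\mathbb{R}}(X)$ with $\Gamma(f) > \int f\,\mathrm{d}\mu$. Write $g_k = \frac{1}{k}\sum_{i=0}^{k-1} T^i f$, which is continuous and nonnegative. By strict positivity, Lemma \ref{Assani Lemma 3} gives $\|g_k\|_\infty = \sup_{x\in X} g_k(x)$, and since $\Gamma(f) = \inf_k \|g_k\|_\infty$ we have $\sup_{x\in X} g_k(x) \ge \Gamma(f)$ for every $k$. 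Setting $\eta = \tfrac{1}{2}\big(\Gamma(f) - \int f\,\mathrm{d}\mu\big) > 0$, I would define the superlevel sets
$$U_k = \Big\{ x \in X : g_k(x) > \Gamma(f) - \eta \Big\}.$$
Each $U_k$ is open by continuity of $g_k$ and nonempty because $\sup_{x} g_k(x) \ge \Gamma(f) > \Gamma(f) - \eta$; strict positivity of $\mu$ then forces $\mu(U_k) > 0$, so $\alpha_{U_k}$ is defined. On $U_k$ the integrand exceeds $\Gamma(f) - \eta$, hence $\alpha_{U_k}(g_k) \ge \Gamma(f) - \eta = \tfrac{1}{2}\big(\Gamma(f) + \int f\,\mathrm{d}\mu\big) > \int f\,\mathrm{d}\mu$ for every $k$. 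Thus $\alpha_{U_k}\big(\frac{1}{k}\sum_{i=0}^{k-1} T^i f\big)$ is bounded away from $\int f\,\mathrm{d}\mu$ and cannot converge to it, contradicting $(3)$.

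The main obstacle is exactly this last implication, and it is where all the hypotheses pull their weight: Lemma \ref{Assani Lemma 4} supplies a continuous function whose gauge strictly exceeds its integral, continuity of the averages $g_k$ makes the sets $\{g_k > \Gamma(f) - \eta\}$ open, and strict positivity of $\mu$ guarantees these nonempty open sets have positive measure. I expect no difficulty in the two formal implications; the only point requiring care is confirming that a single function $f$ together with the single sequence $(U_k)$ suffices to defeat $(3)$, which it does precisely because the lower bound $\Gamma(f) - \eta$ is uniform in $k$.
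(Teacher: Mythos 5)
Your proposal is correct and follows essentially the same route as the paper: the two formal implications are handled via the norm-$1$ property of the states $\alpha_{F_k}$, and $(3)\Rightarrow(1)$ is proved by contraposition using Lemma \ref{Assani Lemma 4} to produce a nonnegative $f$ with $\Gamma(f) > \int f \,\mathrm{d}\mu$ and then taking superlevel sets of the Ces\`aro averages, which are nonempty open (hence of positive measure by strict positivity) and on which the averages stay bounded away from $\int f\,\mathrm{d}\mu$. The only cosmetic difference is that you fix the threshold at the midpoint $\Gamma(f) - \eta$ where the paper uses an arbitrary $L \in \left( \int f\,\mathrm{d}\mu , \Gamma(f) \right)$.
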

	
	\begin{proof}
		(1)$\Rightarrow$(2): If $(X, T)$ is uniquely ergodic, then \linebreak $\left\| \int f \mathrm{d} \mu - \frac{1}{k} \sum_{i = 0}^{k - 1} T^i f \right\|_{C(X)} \stackrel{k \to \infty}{\to} 0$, so
		\begin{align*}
			\left| \int f \mathrm{d} \mu - \alpha_{F_k} \left( \frac{1}{k} \sum_{i = 0}^{k - 1} T^i f \right) \right|	& = \left| \alpha_{F_k} \left( \int f \mathrm{d} \mu - \frac{1}{k} \sum_{i = 0}^{k - 1} T^i f \right) \right| \\
			& \leq \left\| \int f \mathrm{d} \mu - \frac{1}{k} \sum_{i = 0}^{k - 1} T^i f \right\|_{C(X)} \\
			& \stackrel{k \to \infty}{\to} 0 .
		\end{align*}
		
		(2)$\Rightarrow$(3): Trivial, since an open set is automatically Borel.
		
		$\neg$(1)$\Rightarrow \neg$(3): Suppose $(X, T)$ is \emph{not} uniquely ergodic, and that $\mu$ is strictly positive. Then Lemma \ref{Assani Lemma 4} tells us that there exists nonnegative $f \in C_{\mathbb{R}}(X)$ for which $\Gamma(f) > \int f \mathrm{d} \mu$. Let $L$ be such that $\int f \mathrm{d} \mu < L < \Gamma(f)$, and consider the open set
		$$U_k = \left\{ x \in X : \frac{1}{k} \sum_{i = 0}^{k - 1} T^i f(x) > L \right\} .$$
		By the proof of Lemma \ref{Assani Lemma 3}, we know that
		$$\Gamma(f) = \inf_{k \in \mathbb{N}} \max_{x \in X} \frac{1}{k} \sum_{i = 0}^{k - 1} T^i f(x) = \inf_{k \in \mathbb{N}} \frac{1}{k} \sum_{i = 0}^{k - 1} T^i f(x_k) ,$$ where $x_k \in U_k$ for all $k \in \mathbb{N}$. Therefore $U_k$ is a nonempty open set, and since $\mu$ is strictly positive, that means $\mu(U_k) > 0$. Therefore
		\begin{align*}
			\alpha_{U_k} \left( \frac{1}{k} \sum_{i = 0}^{k - 1} T^i f \right)	& \geq \alpha_{U_k} \left( L \right) \\
			& = L \\
			\Rightarrow \liminf_{k \to \infty} \alpha_{U_k}(f)	& \geq L \\
			& > \int f \mathrm{d} \mu .
		\end{align*}
	\end{proof}
	
	\begin{Thm}\label{Pathological connected differentiation}
		Suppose that $(X, \mathcal{B}, \mu, T)$ consists of a compact connected metric space $X = (X, \rho)$ with Borel $\sigma$-algebra $\mathcal{B}$ and a probability measure $\mu$ that is ergodic with respect to a homeomorphism $T: X \to X$. Suppose further that $\mu$ is strictly positive, but $(X, T)$ is not uniquely ergodic. Then there exists a sequence $(U_k)_{k = 1}^\infty$ of nonempty open subsets of $X$ and a nonnegative continuous function $f \in C(X)$ such that the sequence $\left( \alpha_{U_k} \left( \frac{1}{k} \sum_{i = 0}^{k - 1} T^i f \right) \right)_{k = 1}^\infty$ is not Cauchy. Furthermore, if $\mu$ is atomless, then we can choose the sequence $(U_k)_{k = 1}^\infty$ such that $\mu \left( U_k \right) \searrow 0$.
	\end{Thm}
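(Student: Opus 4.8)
The plan is to sharpen the construction used in the proof of Theorem~\ref{Assani Theorem}: there the sets $\{g_k > L\}$ were used only to force a large $\liminf$, but to defeat the Cauchy property I would pair them with a second family on which the same Birkhoff averages are \emph{small}, and then interleave the two families so that the resulting sequence oscillates. Throughout write $g_k = \frac{1}{k}\sum_{i=0}^{k-1} T^i f$, which is continuous since $f \in C(X)$ and $T$ is a homeomorphism. First, because $\mu$ is strictly positive and $(X,T)$ is not uniquely ergodic, Lemma~\ref{Assani Lemma 4} supplies a nonnegative $f \in C_{\mathbb{R}}(X)$ with $\Gamma(f) > \int f \, \mathrm{d}\mu$; fix levels $\int f \, \mathrm{d}\mu < L_1 < L_2 < \Gamma(f)$.

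Next I would build the two families. Because $\Gamma(f) = \inf_k \|g_k\|_\infty$ and, by the strict-positivity argument inside the proof of Lemma~\ref{Assani Lemma 3}, $\|g_k\|_\infty = \max_{x \in X} g_k(x)$, each $A_k^+ = \{x \in X : g_k(x) > L_2\}$ is a nonempty open set; strict positivity gives $\mu(A_k^+) > 0$, and $\alpha_{A_k^+}(g_k) \ge L_2$. On the other hand $\int g_k \, \mathrm{d}\mu = \int f \, \mathrm{d}\mu < L_1$, so the open set $A_k^- = \{x \in X : g_k(x) < L_1\}$ is nonempty (otherwise $g_k \ge L_1$ everywhere would force $\int f \, \mathrm{d}\mu \ge L_1$), has positive measure, and satisfies $\alpha_{A_k^-}(g_k) < L_1$. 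Setting $U_k = A_k^+$ for odd $k$ and $U_k = A_k^-$ for even $k$, the sequence $\alpha_{U_k}(g_k)$ has odd terms $\ge L_2$ and even terms $< L_1$; since $L_2 - L_1 > 0$, consecutive terms stay a fixed distance apart, so the sequence is not Cauchy.

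For the final clause, when $\mu$ is atomless I would shrink each $U_k$ to a small open ball. The key point is that $g_k > L_2$ holds pointwise on all of $A_k^+$ and $g_k < L_1$ pointwise on all of $A_k^-$, so \emph{any} nonempty open subset $U_k' \subseteq A_k^+$ still has $\alpha_{U_k'}(g_k) \ge L_2$, and any nonempty open subset of $A_k^-$ still has average $< L_1$; shrinking never moves the average across the level. Since $A_k^+, A_k^-$ are open and nonempty they contain open balls, and for a fixed center $x$ the nested balls $B(x, 1/n)$ decrease to $\{x\}$, so continuity of $\mu$ from above together with atomlessness gives $\mu(B(x, 1/n)) \to \mu(\{x\}) = 0$. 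I would then choose centers and radii inductively so that $\mu(U_k) < \min(\mu(U_{k-1}), 1/k)$, which makes $\mu(U_k) \searrow 0$ while the oscillation argument above is untouched.

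The only genuine obstacle is this last requirement of monotone decay: I must drive the measures to $0$ without losing control of the averages. This is exactly what the pointwise strict inequalities defining $A_k^+$ and $A_k^-$ buy, reducing the difficulty to the measure-theoretic fact that small balls have small measure, where atomlessness (for the limit $0$) and strict positivity (for positivity of each $\mu(U_k)$) enter. I note that connectedness of $X$ does not appear to be needed for this approach; it presumably enters the hypothesis to make the pathology more striking on a space one might expect to behave well, or because the intended construction organizes the sets differently.
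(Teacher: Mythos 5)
Your proof is correct, and while it shares the paper's overall skeleton --- Lemma \ref{Assani Lemma 4} supplies a nonnegative $f$ with $\Gamma(f) > \int f \, \mathrm{d}\mu$, two families of level sets of the Birkhoff averages $g_k = \frac{1}{k}\sum_{i=0}^{k-1} T^i f$ are interleaved along odd and even $k$ to destroy the Cauchy property, and the atomless refinement shrinks each set to a small ball --- it differs at the one step that actually carries the topological hypothesis. The paper's ``low'' sets are $W_k = \left\{ x \in X : \int f \, \mathrm{d}\mu < g_k(x) < L \right\}$, and their nonemptiness is established by the Intermediate Value Theorem applied to the continuous function $g_k$ on the \emph{connected} space $X$ (using that $g_k$ takes a value above $M$ and a value at most $\int f \, \mathrm{d}\mu$). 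Your low sets $A_k^- = \{ x \in X : g_k(x) < L_1 \}$ dispense with the pinching from below, so their nonemptiness follows from the integral argument alone: $\int g_k \, \mathrm{d}\mu = \int f \, \mathrm{d}\mu < L_1$ forces $g_k$ below $L_1$ somewhere. Consequently you never invoke connectedness, and your closing observation is accurate: your argument proves a strictly stronger theorem in which the connectedness hypothesis is deleted, while the paper's lower bound $\int f \, \mathrm{d}\mu < g_k$ on $W_k$ is never actually used in its estimates, so nothing is lost by your simplification. This strengthening is of genuine interest in the context of the paper, because the authors remark twice that Theorem \ref{Pathological connected differentiation} does not encompass the subshift example of Theorem \ref{Pathological Differentiation} precisely because subshifts are totally disconnected; under your proof that obstruction disappears (for instance, a full-support Bernoulli shift is strictly positive and not uniquely ergodic, so your version applies to it directly, though it does not reproduce the finer information of Theorem \ref{Pathological Differentiation} about cylinders centered along a single orbit).
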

	
	\begin{proof}
		Lemma \ref{Assani Lemma 4} tells us that there exists nonnegative $f \in C_{\mathbb{R}}(X)$ for which $\Gamma(f) > \int f \mathrm{d} \mu$. Let $L, M \in \mathbb{R}$ such that $\int f \mathrm{d} \mu < L < M < \Gamma(f)$, and consider the open sets
		\begin{align*}
			V_k	& = \left\{ x \in X : \frac{1}{k} \sum_{i = 0}^{k - 1} T^i f(x) > M \right\} , \\
			W_k	& = \left\{ x \in X : \int f \mathrm{d} \mu < \frac{1}{k} \sum_{i = 0}^{k - 1} T^i f(x) < L \right\} .
		\end{align*}
		By the proof of Lemma \ref{Assani Lemma 3}, we know that $V_k \neq \emptyset$, so let $x_k \in V_k$. We also know that there exists $z_k$ in $X$ such that $f(z_k) \leq \int f \mathrm{d} \mu$, since if $f(z) > \int f \mathrm{d} \mu$ for all $z \in X$, then $\int f(z) \mathrm{d} \mu(z) > \int f \mathrm{d} \mu$, a contradiction. By the Intermediate Value Theorem, there then exists $y_k \in W_k$. Construct $(U_k)_{k = 1}^\infty$ as
		$$U_k = \begin{cases}
			V_k ,	& \textrm{$k$ odd} \\
			W_k ,	& \textrm{$k$ even} .
		\end{cases}$$
		Then
		\begin{align*}
			\limsup_{k \to \infty} \alpha_{U_{2k - 1}} \left( \frac{1}{2k - 1} \sum_{i = 0}^{2k - 2} T^i f \right)	& \geq \limsup_{k \to \infty} \alpha_{U_{2k - 1}} \left( M \right) \\
			& = M , \\
			\liminf_{k \to \infty} \alpha_{U_{2k}} \left( \frac{1}{2k} \sum_{i = 0}^{2k - 1} T^i f \right)	& \leq \liminf_{k \to \infty} \alpha_{U_{2k}} \left( L \right) \\
			& = L .
		\end{align*}
		Therefore
		$$\liminf_{k \to \infty} \alpha_{U_k} \left( \frac{1}{k} \sum_{i = 0}^{k - 1} T^i f \right) \leq L < M \leq \limsup_{k \to \infty} \alpha_{U_k} \left( \frac{1}{k} \sum_{i = 0}^{k - 1} T^i f \right) .$$
		
		Moreover, if $\mu$ is atomless, then we can choose $\left(U_k \right)_{k = 1}^\infty$ so that $\mu \left( U_k \right) \to 0$ by letting $U_k$ be a ball of sufficiently small radius contained in $V_k$ (if $k$ is odd) or $W_k$ (if $k$ is even). The above calculations can be carried out in the same way.
	\end{proof}
	
	In Theorem \ref{Pathological Differentiation}, we construct an example of a Bernoulli shift $(X, \mathcal{B}, \mu, T)$ where there exists $(x, f) \in X \times C(X)$ such that the sequence \linebreak $\left( \alpha_{C_k(x)} \left( \frac{1}{k} \sum_{i = 0}^{k - 1} T^i f \right) \right)_{k = 1}^\infty$ not only does not converge to $\int f \mathrm{d} \mu$, as in Theorem \ref{Assani Theorem}, but such that it does not converge at all. Theorem \ref{Pathological connected differentiation} does not encompass that example, since subshifts are a priori totally disconnected.
	
	In the next result, we will be making use of the Jewett-Krieger Theorem in a specific formulation. This is the formulation originally proven by Jewett in \cite{Jewett} under the assumption that the transformation was weakly mixing; Bellow and Furstenberg later demonstrated in \cite{BellowFurstenberg} that the parts of Jewett's argument which relied on the weakly mixing property could be proven under the weaker assumption of ergodicity. The version of the Jewett-Krieger Theorem we will be using is as follows.
	
	\begin{JK}
		Given an invertible ergodic system $(Y, \mathcal{A}, \nu, S)$ on a standard probability space $(Y, \mathcal{A}, \nu)$, there exists an essential isomorphism $h : (Y, \mathcal{A}, \nu,  S) \to \left(2^\omega, \mathcal{B}, \mu, T \right)$ (where $2^\omega$ denotes the Cantor space) such that $\left( 2^\omega, T \right)$ is a strictly ergodic system.
	\end{JK}
	
	The following result provides some structure statements about the space $\mathscr{U}(Y, \mathcal{A}, \nu, S)$ of uniform functions.
	
	\begin{Thm}
		Let $(Y, \mathcal{A}, \nu)$ be a standard probability space, and $S : Y \to Y$ an ergodic automorphism. Then $\mathscr{U}(Y, \mathcal{A}, \nu, S)$ is a closed $S$-invariant subspace of $L^\infty (Y, \nu)$ that is closed under complex conjugation, and contains a unital $S$-invariant C*-subalgebra $A$ which is dense in $L^1(Y, \nu)$. This $A$ is isomorphic as a C*-subalgebra to $C \left( 2^\omega \right)$.
	\end{Thm}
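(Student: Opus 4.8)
The plan is to dispatch the elementary vector-space and $*$-structure claims directly from the definition of uniformity, and then to construct the C*-subalgebra $A$ by transporting the continuous functions of a strictly ergodic topological model back to $(Y, \mathcal{A}, \nu, S)$ via the Jewett--Krieger theorem.

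First I would verify that $\mathscr{U}(Y, \mathcal{A}, \nu, S)$ is a closed, conjugation-closed, $S$-invariant subspace. Linearity is immediate, since both $g \mapsto \frac{1}{k}\sum_{i=0}^{k-1} S^i g$ and $g \mapsto \int g \, d\nu$ are linear. Conjugation-closedness follows from $S^i \bar g = \overline{S^i g}$ and $\int \bar g \, d\nu = \overline{\int g \, d\nu}$, so the governing $L^\infty$-norm is unchanged under conjugation. For $S$-invariance, the telescoping identity $\frac{1}{k}\sum_{i=0}^{k-1} S^i (Sg) = \frac{1}{k}\sum_{i=0}^{k-1} S^i g + \frac{1}{k}(S^k g - g)$ shows the averages of $Sg$ differ from those of $g$ by a term of norm at most $2\|g\|_\infty / k \to 0$, while $\int Sg \, d\nu = \int g \, d\nu$. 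For norm-closedness, given uniform $g_n \to g$ in $L^\infty$, I would bound $\|\frac{1}{k}\sum_{i=0}^{k-1} S^i g - \int g\, d\nu\|_\infty$ by $\|g - g_n\|_\infty + \|\frac{1}{k}\sum_{i=0}^{k-1} S^i g_n - \int g_n\, d\nu\|_\infty + |\int g_n\, d\nu - \int g\, d\nu|$, using that each $S^i$ is an $L^\infty$-isometry; choosing $n$ large to control the outer terms and then $k$ large to kill the middle term yields uniformity of $g$.

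For the C*-subalgebra, I would apply Jewett--Krieger to obtain an essential isomorphism $h : (Y, \mathcal{A}, \nu, S) \to (2^\omega, \mathcal{B}, \mu, T)$ with $(2^\omega, T)$ strictly ergodic, so that $h \circ S = T \circ h$ holds $\nu$-a.e.\ and $h_* \nu = \mu$. Let $\Phi : L^\infty(2^\omega, \mu) \to L^\infty(Y, \nu)$ be the pullback $\Phi(\phi) = \phi \circ h$; it is a unital $*$-homomorphism intertwining the dynamics, $\Phi \circ T = S \circ \Phi$, and preserving integrals, $\int \Phi(\phi)\, d\nu = \int \phi\, d\mu$. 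Set $A := \Phi(C(2^\omega))$. Because strict ergodicity forces $\mu$ to have full support (the support of the invariant measure of a minimal system is all of $2^\omega$), the restriction $\Phi|_{C(2^\omega)}$ is isometric, hence an injective $*$-homomorphism, so $A$ is a unital C*-subalgebra of $L^\infty(Y, \nu)$ isomorphic to $C(2^\omega)$; its $S$-invariance follows since $T$ carries $C(2^\omega)$ onto itself. To see $A \subseteq \mathscr{U}(Y, \mathcal{A}, \nu, S)$, I would recall that unique ergodicity of $(2^\omega, T)$ gives $C(2^\omega) \subseteq \mathscr{U}(2^\omega, \mathcal{B}, \mu, T)$ (as recorded earlier in this section), and then transport uniformity through the isometry: for $\phi \in C(2^\omega)$, $\|\frac{1}{k}\sum_{i=0}^{k-1} S^i \Phi(\phi) - \int \Phi(\phi)\, d\nu\|_\infty = \|\frac{1}{k}\sum_{i=0}^{k-1} T^i \phi - \int \phi\, d\mu\|_\infty \to 0$. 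Finally, density of $A$ in $L^1(Y, \nu)$ follows because $\Phi$ extends to an isometric isomorphism $L^1(2^\omega, \mu) \to L^1(Y, \nu)$ and $C(2^\omega)$ is $L^1(\mu)$-dense.

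The main obstacle is the interface between the purely measure-theoretic essential isomorphism supplied by Jewett--Krieger and the topological data of $C(2^\omega)$: one must ensure that $\Phi$ is isometric on continuous functions, which is exactly the point where strict ergodicity (and the resulting full support of $\mu$) is essential, since without full support the pullback of a nonzero continuous function could vanish $\nu$-a.e., destroying both the C*-isomorphism and the transfer of uniformity. The remaining checks --- that $\Phi$ is multiplicative and unital and that it descends to an $L^1$-isometry --- are routine consequences of $h$ being a measure-isomorphism, and I would not belabor them.
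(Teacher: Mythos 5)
Your proposal is correct and follows essentially the same route as the paper: the same telescoping and three-epsilon arguments for the subspace, conjugation, and closedness claims, and the same Jewett--Krieger construction pulling back $C\left(2^\omega\right)$ through the essential isomorphism, with full support of the strictly ergodic model's measure guaranteeing the pullback is isometric on continuous functions. If anything, you spell out a few points the paper leaves implicit (the $S$-invariance and unitality of $A$, and the $L^1$-isometry underlying the density claim), so no gaps to report.
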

	
	\begin{proof}
		First, we prove that $\mathscr{U}(Y, \mathcal{A}, \nu, S)$ is a closed $S$-invariant subspace of $L^\infty(Y, \nu)$. The fact it is a subspace of $L^\infty(Y, \nu)$ is clear, so suppose $f \in \operatorname{cl} \left( \mathscr{U}(Y, \mathcal{A}, \nu, S) \right)$. Then there exists $g \in \mathscr{U}(Y, \mathcal{A}, \nu, S)$ such that $\left\|f - g \right\|_\infty \leq \frac{\epsilon}{3}$. Choose $K \in \mathbb{N}$ such that $k \geq K \Rightarrow \left\| \int g \mathrm{d} \mu - \frac{1}{k} \sum_{i = 0}^{k - 1} T^i g \right\|_\infty \leq \frac{\epsilon}{3}$ Then
		\begin{align*}
			\left\| \int f \mathrm{d} \mu - \frac{1}{k} \sum_{i = 0}^{k - 1} T^i f \right\|_\infty	& \leq \left\| \int f \mathrm{d} \mu - \int g \mathrm{d} \mu \right\|_\infty  \\
			& + \left\| \int g \mathrm{d} \mu - \frac{1}{k} \sum_{i = 0}^{k - 1} T^i g \right\|_\infty \\
			& + \left\| \frac{1}{k} \sum_{i = 0}^{k - 1} T^i (g - f) \right\|_\infty \\
			& \leq \left\| f - g \right\|_\infty + \left\| \int g \mathrm{d} \mu - \frac{1}{k} \sum_{i = 0}^{k - 1} T^i g \right\|_\infty + \left\| f - g \right\|_\infty \\
			& \leq \frac{\epsilon}{3} + \frac{\epsilon}{3} + \frac{\epsilon}{3} = \epsilon .
		\end{align*}
		Thus $f \in \mathscr{U}(Y, \mathcal{A}, \nu, S)$. Now, we claim that if $f \in \mathscr{U}(Y, \mathcal{A}, \nu, S)$, then $Sf, S^{-1}f \in \mathscr{U}(Y, \mathcal{A}, \nu, S)$. We compute
		\begin{align*}
			\left\| \int S f \mathrm{d} \nu - \frac{1}{k} \sum_{i = 0}^{k - 1} S^i (Sf) \right\|_\infty	& = \left\| \int f \mathrm{d} \nu - \frac{1}{k} \sum_{i = 0}^{k - 1} S^i (Sf) \right\|_\infty	\\
			& = \left\| \int f \mathrm{d} \nu - \left( \frac{1}{k} \sum_{i = 0}^{k - 1} S^i f \right) + \frac{1}{k} \left( f - S^k f \right) \right\|_\infty \\
			& \leq \left\| \int f \mathrm{d} \nu - \frac{1}{k} \sum_{i = 0}^{k - 1} S^i f \right\|_\infty + \frac{2 \| f \|}{k} \\
			& \stackrel{k \to \infty}{\to} 0.
		\end{align*}
		An analogous argument will show that $S^{-1} f \in \mathscr{U}(Y, \mathcal{A}, \nu, S)$. To see that $\mathscr{U}(Y, \mathcal{A}, \nu, S)$ is also closed under complex conjugation, we see that
		\begin{align*}
			\left\| \int \overline{f} \mathrm{d} \mu - \frac{1}{k} \sum_{i = 0}^{k - 1} T^i \overline{f} \right\|_\infty	& = \left\| \overline{\int f \mathrm{d} \mu - \frac{1}{k} \sum_{i = 0}^{k - 1} T^i f} \right\|_\infty \\
			& = \left\| \int f \mathrm{d} \mu - \frac{1}{k} \sum_{i = 0}^{k - 1} T^i f \right\|_\infty .
		\end{align*}
		
		Finally, we prove that $\mathscr{U}(Y, \mathcal{A}, \nu, S)$ contains a unital $S$-invariant C*-algebra $A$ that's dense in $L^1(Y, \nu)$. By the Jewett-Krieger Theorem, we know there exists an essential isomorphism $\phi : (Y, \mathcal{A}, \nu, S) \to \left(2^\omega, \mathcal{B}, \mu, T\right)$, where $\left( 2^\omega, \mathcal{B}, \mu, T \right)$ is uniquely ergodic. Let $A = \Phi \left( C \left( 2^\omega \right) \right)$, where $\Phi : L^\infty \left( 2^\omega , \mu \right) \to L^\infty(Y, \nu)$ is the pullback of $\phi$. Since $C \left( 2^\omega \right)$ is dense in $L^1 \left( 2^\omega , \mu \right)$, we can infer that $A = \Phi \left( C \left( 2^\omega \right) \right)$ is dense in $L^1(Y, \nu)$. Since continuous functions in a uniquely ergodic system are uniform, it follows that the functions of $A$ are uniform.
		
		Because $\mu$ is strictly positive, we know that $C \left( 2^\omega \right)$ is isomorphic to its copy in $L^\infty \left( 2^\omega , \mu \right)$ (see proof of Lemma \ref{Assani Lemma 3}), so this map $\Phi$ is an isomorphism between $C \left( 2^\omega \right) \subsetneq L^\infty \left( 2^\omega , \mu \right)$ and $A = \Phi \left( C \left( 2^\omega \right) \right)$.
	\end{proof}
	
	\begin{Prop}
		Suppose that $(X, \mathcal{B}, \mu, T)$ consists of a compact metric space $X = (X, \rho)$ with Borel $\sigma$-algebra $\mathcal{B}$ and a probability measure $\mu$ that is ergodic with respect to a homeomorphism $T: X \to X$, where $X$ is connected. Suppose further that $\exists F \in \mathcal{B}$ such that $0 < \mu(F) < 1$. Then there exists $f \in \mathscr{U}(X, \mathcal{B}, \mu, T) \setminus C(X)$.
	\end{Prop}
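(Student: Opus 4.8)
The plan is to produce a uniform \emph{indicator} function $\chi_B$ attached to a set $B$ of intermediate measure $0 < \mu(B) < 1$; once such a $B$ is in hand, connectedness of $X$ will prevent $\chi_B$ from having a continuous representative, placing it in $\mathscr{U}(X, \mathcal{B}, \mu, T) \setminus C(X)$. To manufacture uniform indicators I would invoke the Hansel--Raoult theorem \cite[Theorem 1]{HanselRaoult}, whose hypothesis is met here: since $X$ is a compact metric space it is second countable, so $\mathcal{B}$ is countably generated and the measure algebra $(\mathcal{B}, d_\mu)$ with $d_\mu(A, B) = \mu(A \Delta B)$ is separable. Hansel--Raoult then furnishes a $d_\mu$-dense, $T$-invariant subalgebra $\mathcal{B}' \subseteq \mathcal{B}$ for which $\chi_B$ is uniform whenever $B \in \mathcal{B}'$. (Alternatively, the preceding theorem supplies a C*-subalgebra $A \subseteq \mathscr{U}(X,\mathcal{B},\mu,T)$ isomorphic to $C(2^\omega)$, whose projections are uniform indicator functions; either source of uniform indicators will do.)

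Next I would extract from $\mathcal{B}'$ a set of intermediate measure. Set $\epsilon = \min\{\mu(F), 1 - \mu(F)\}$, which is strictly positive precisely because $0 < \mu(F) < 1$. By $d_\mu$-density there is $B \in \mathcal{B}'$ with $\mu(B \Delta F) < \epsilon$, and since $|\mu(B) - \mu(F)| \leq \mu(B \Delta F) < \epsilon$, this forces $0 < \mu(B) < 1$. The function $\chi_B$ is then uniform, so $\chi_B \in \mathscr{U}(X, \mathcal{B}, \mu, T)$, and it only remains to verify that $\chi_B \notin C(X)$ as an element of $L^\infty(X, \mu)$.

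The heart of the argument, and the step I expect to be the main obstacle, is ruling out a continuous representative. Suppose toward a contradiction that $g \in C(X)$ agrees with $\chi_B$ $\mu$-almost everywhere. Off the null set where $g \neq \chi_B$ the function $g$ is forced into $\{0, 1\}$, so the open set $g^{-1}\left( (1/3, 2/3) \right)$ meets only that null set and is therefore itself $\mu$-null; since a nonempty open set carries positive measure, it must be empty. Thus $g$ omits the interval $(1/3, 2/3)$, so the continuous image $g(X)$ of the connected space $X$ is a connected subset of $(-\infty, 1/3] \cup [2/3, \infty)$ and hence lies in a single half-line; comparing with $g \in \{0,1\}$ almost everywhere then forces $\mu(B) \in \{0, 1\}$, contradicting $0 < \mu(B) < 1$. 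The delicate point is the passage from an almost-everywhere identity to a topological statement: it is exactly connectedness, together with the fact that nonempty open sets are non-null (i.e. strict positivity of $\mu$), that converts ``$\chi_B$ is two-valued on a positive-measure set'' into the impossibility of a continuous representative. I would flag this use of full support as the essential hypothesis, since if $\mu$ is allowed to omit open sets one can have every bounded function agree a.e. with a continuous one, and the conclusion can fail.
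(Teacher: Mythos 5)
Your route to the key object differs from the paper's, and both halves of your proposal deserve separate comment. Where you invoke Hansel--Raoult \cite{HanselRaoult} to obtain a $T$-invariant algebra of uniform sets that is dense in the metric $(A, B) \mapsto \mu(A \Delta B)$, and then approximate $F$ to get a uniform indicator $\chi_B$ with $0 < \mu(B) < 1$, the paper instead applies the Jewett--Krieger theorem: it takes an essential isomorphism $h$ from $(X, \mathcal{B}, \mu, T)$ onto a strictly ergodic system on $2^\omega$, argues via the clopen basis of $2^\omega$ (together with the hypothesis on $F$, pushed forward through $h$) that some clopen set $G_0$ must have intermediate measure, and pulls back $\chi_{G_0}$, which is uniform because continuous functions on a uniquely ergodic system are uniform. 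The two constructions deliver the same object --- a $\{0,1\}$-valued uniform function whose level set has measure strictly between $0$ and $1$ --- and your steps are sound: the separability hypothesis holds for the Borel algebra of a compact metric space, and $|\mu(B) - \mu(F)| \le \mu(B \Delta F)$ gives the intermediate measure. Your parenthetical fallback (projections in the C*-algebra $A$ isomorphic to $C\left(2^\omega\right)$ from the preceding theorem) is essentially the paper's argument in disguise, but your primary route is genuinely different and somewhat more economical.

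The concluding step is where the real issue sits, and it is instructive that you and the paper stumble on the same stone. Your argument that $\chi_B$ admits no continuous representative uses, as you yourself flag, that every nonempty open set has positive $\mu$-measure; strict positivity is not among the hypotheses of the proposition. The paper is no better off: its proof simply asserts that a $\{0,1\}$-valued $f$ with $\mu(\{f = 1\}) \notin \{0,1\}$ cannot lie in $C(X)$, and that assertion needs exactly the same full-support input, since connectedness of $X$ alone does not rule out a continuous representative. A concrete illustration: for a Denjoy homeomorphism of the circle, $X = \TT$ is connected and $\mu$ is ergodic and non-atomic but supported on a Cantor set $K \subsetneq \TT$; if $C$ is a clopen-in-$K$ subset with $0 < \mu(C) < 1$, Urysohn's lemma produces $g \in C(\TT)$ with $g = 1$ on $C$ and $g = 0$ on $K \setminus C$, so $\chi_C = g$ in $L^\infty(\TT, \mu)$ despite $\chi_C$ being two-valued of intermediate measure. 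So ``two-valued with intermediate measure'' does not by itself exclude membership in $C(X)$, and both your proof and the paper's are incomplete as written unless strict positivity (or some substitute, such as a more careful choice of the set $B$ adapted to the support of $\mu$) is supplied. Your explicit identification of this hypothesis is more honest than the paper's silence on the point, but it also means that, as stated, neither argument establishes the proposition in the generality claimed.
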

	
	\begin{proof}
		By the Jewett-Krieger Theorem, there exists an essential isomorphism $h : (X, \mathcal{B}, \mu, T) \to \left( X', \mathcal{B}', \mu', T' \right)$, where $X' = 2^\omega$ and $\left( X', T' \right)$ is uniquely ergodic. The topological space $2^\omega$ admits a basis $\mathcal{G}$ of clopen sets. We claim that there exists $G \in \mathcal{G}$ such that $0 < \mu'(G) < 1$.
		
		Assume for contradiction that $\mu'(G) \in \{0, 1\}$ for all $G \in \mathcal{G}$. If $(E_k)_{k = 1}^\infty$ is some sequence in $\mathcal{B}'$ of sets for which $\mu'(E_k) \in \{0, 1\}$, then
		\begin{align*}
			\mu' \left( \bigcup_{k = 1}^\infty E_k \right)	& = \max_{k \in \mathbb{N}}	\mu'(E_k)	& \in \{0, 1\}, \\
			\mu' \left( \bigcap_{k = 1}^\infty E_k \right)	& = \min_{k \in \mathbb{N}}	\mu' (E_k)	& \in \{0, 1\}, \\
			\mu'(X \setminus E_1)	& = 1 - \mu(E_1)	& \in \{0, 1\} .
		\end{align*}
		But since $\mathcal{G}$ generates $\mathcal{B}'$, this would imply that $\mu'(E) \in \{0, 1\}$ for all $E \in \mathcal{B}'$, a contradiction.
		
		Therefore, there exists $G_0 \in \mathcal{B}'$ clopen such that $0 < \mu'(G_0) < 1$. Set $g = \chi_{G_0} \in C \left( X' \right) \subseteq \mathscr{U} \left( X', \mathcal{B}', \mu', T' \right)$, and let $f = g \circ h$. Then $f \in \mathscr{U}(X, \mathcal{B}, \mu, T)$. But since $f$ takes values in $\{0, 1\}$, and $\mu(\{x \in X : f(x) = 1 \}) \not \in \{0, 1\}$, we must conclude that $f \in \mathscr{U} \left( X', \mathcal{B}', \mu', T' \right) \setminus C(X)$.
	\end{proof}
	
	We conclude this section by remarking that in most situations, we'll have $\mathscr{U}(Y, \mathcal{A}, \nu, S) \neq L^\infty(Y, \nu)$. We cite here a special case of a result of N. Ormes.
	
	\begin{Lem}
	Suppose $(Y, \mathcal{A}, \nu)$ is a non-atomic standard probability space, and $S : Y \to Y$ is an ergodic automorphism. Then there exists a minimal homeomorphism $T : 2^\omega \to 2^\omega$ and an affine homeomorphism $p : [0, 1] \to \mathcal{M}_T \left( 2^\omega \right)$ for which $\left( 2^\omega , \mathcal{B} , p(0) , T \right)$ is essentially isomorphic to $(Y, \mathcal{A}, \nu, S)$, where $\mathcal{B}$ here denotes the Borel $\sigma$-algebra on $2^\omega$.
	\end{Lem}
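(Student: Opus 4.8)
The plan is to obtain this statement as a direct specialization of the cited realization theorem of Ormes, which produces a minimal homeomorphism of the Cantor set whose simplex of invariant Borel probability measures is affinely homeomorphic to a prescribed metrizable Choquet simplex $K$, and which, at a prescribed extreme point of $K$, is measure-theoretically isomorphic to a prescribed ergodic automorphism of a non-atomic standard probability space. The only structural facts we must supply are that the target $[0,1]$ is a simplex of the required type and that the given system $(Y, \mathcal{A}, \nu, S)$ fits the input hypotheses.

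First I would record that the closed interval $[0,1]$, viewed inside $\mathbb{R}$ with its usual convex structure, is a metrizable Choquet simplex. Indeed it is a compact convex metrizable set whose extreme boundary is exactly $\{0, 1\}$, and every point $t \in [0,1]$ has the (unique) barycentric representation $t = (1-t)\cdot 0 + t \cdot 1$; hence $[0,1]$ is a one-dimensional simplex with precisely two extreme points. This is the object we will hand to Ormes' theorem as the target simplex $K$.

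Next I would verify the input hypotheses for our data: $(Y, \mathcal{A}, \nu)$ is a non-atomic standard probability space and $S$ is an ergodic automorphism, which is exactly what is assumed. I would then apply the cited result with $K = [0,1]$ and with the distinguished extreme point taken to be $0 \in [0,1]$. This yields a minimal homeomorphism $T : 2^\omega \to 2^\omega$ together with an affine homeomorphism $p : [0,1] \to \mathcal{M}_T\left(2^\omega\right)$ carrying $0$ to a $T$-invariant measure $p(0)$ for which $\left(2^\omega, \mathcal{B}, p(0), T\right)$ is measure-theoretically (hence essentially) isomorphic to $(Y, \mathcal{A}, \nu, S)$. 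As a consistency check, since $0$ is an extreme point of $[0,1]$ and $p$ is an affine homeomorphism onto $\mathcal{M}_T\left(2^\omega\right)$, the measure $p(0)$ is an extreme point of $\mathcal{M}_T\left(2^\omega\right)$, i.e. ergodic, as it must be to be isomorphic to the ergodic system $(Y, \nu, S)$; the remaining extreme point $p(1)$ is some other ergodic measure, which the statement leaves unconstrained.

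Since the entire construction is contained in the cited theorem, there is essentially nothing further to prove, and the one point warranting care is the bookkeeping of hypotheses. Concretely, I would confirm that the notion of \emph{essential isomorphism} used here coincides with the measure-theoretic isomorphism delivered by Ormes (an isomorphism after discarding null sets), and that the side-conditions of the cited theorem are met in our case: metrizability of $K$, which holds since $[0,1]$ is metrizable, and the requirement that $\nu$ be realized at an \emph{extreme} point of $K$, which holds since $0$ is extreme. The main obstacle, were one to prove the lemma from scratch rather than cite it, would be reconstructing Ormes' Kakutani–Rokhlin tower machinery, which builds the minimal Cantor model realizing the prescribed simplex while simultaneously pinning down the measure-theoretic isomorphism type at the chosen vertex; this we take as given.
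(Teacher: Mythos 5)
Your proposal is correct and matches the paper's proof, which likewise obtains the lemma as a special case of Ormes' realization theorem (Corollary 7.4 in the cited work) applied to the Choquet simplex $[0,1]$. The additional bookkeeping you supply---that $[0,1]$ is a metrizable Choquet simplex with extreme points $\{0,1\}$ and that $p(0)$ is consequently ergodic---is exactly the (implicit) content of the paper's one-line citation.
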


	\begin{proof}
	This is a special case of \cite[Corollary 7.4]{Ormes}, where we specifically consider the Choquet simplex $[0, 1]$.
	\end{proof}
	
	Since $\left( 2^\omega, T \right)$ is not uniquely ergodic, it follows that there exists $f_0 \in C \left( 2^\omega \right)$ such that $\left( \frac{1}{k} \sum_{i = 0}^{k - 1} T^i f_0 \right)_{k = 1}^\infty$ does not converge uniformly to the constant $\int f_0 \mathrm{d} \left( p(0) \right)$. Since $\left( 2^\omega , T \right)$ is minimal, and the support of $p(0)$ is a nonempty $T$-invariant compact subset of $2^\omega$, it follows that $p(0)$ is strictly positive, and so the uniform norm on $C \left( 2^\omega \right)$ coincides with the $L^\infty \left( 2^\omega , p(0) \right)$ norm on $C \left( 2^\omega \right)$. As such, it follows that
	$$\left\| \int f_0 \mathrm{d} (p(0)) - \frac{1}{k} \sum_{i = 0}^{k - 1} T^i f_0 \right\|_\infty = \sup_{x \in 2^\omega} \left| \int f_0 \mathrm{d} (p(0)) - \frac{1}{k} \sum_{i = 0}^{k - 1} T^i f_0(x) \right| \stackrel{k \to \infty}{\not \to} 0 .$$ Let $\phi : (Y, \mathcal{A}, \nu, S) \to \left( 2^\omega , \mathcal{B}, p(0), T \right)$ be an essential isomorphism, and let $\Phi : L^\infty \left( 2^\omega , p(0) \right) \to L^\infty (Y, \nu)$ be the pullback of $\phi$. Then
	$$\left\| \int \left( \Phi f_0 \right) \mathrm{d} \nu - \frac{1}{k} \sum_{i = 0}^{k - 1} S^i \left( \Phi f_0 \right) \right\|_\infty = \left\| \int f_0 \mathrm{d} (p(0)) - \frac{1}{k} \sum_{i = 0}^{k - 1} T^i f_0 \right\|_\infty \stackrel{k \to \infty}{\not \to} 0 .$$
	Therefore $\Phi f_0 \in L^\infty(Y, \nu) \setminus \mathscr{U}(Y, \mathcal{A}, \nu, S)$.
	
	The following proposition summarizes this discussion.
	
	\begin{Prop}\label{Existence of non-uniform functions}
		Suppose $(Y, \mathcal{A}, \nu)$ is a non-atomic standard probability space, and $S : Y \to Y$ is an ergodic automorphism. Then $\mathscr{U}(Y, \mathcal{A}, \nu, S) \neq L^\infty (Y, \nu)$.
	\end{Prop}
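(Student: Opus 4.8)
The plan is to observe that the construction needed here has essentially already been carried out in the discussion preceding the statement, so the proof amounts to assembling those pieces into a single argument. First I would invoke the cited special case of Ormes's result (the Lemma immediately above) to realize $(Y,\mathcal{A},\nu,S)$, up to essential isomorphism, as $(2^\omega, \mathcal{B}, p(0), T)$, where $T : 2^\omega \to 2^\omega$ is a minimal homeomorphism and $p : [0,1] \to \mathcal{M}_T(2^\omega)$ is an affine homeomorphism. The key structural consequence is that $\mathcal{M}_T(2^\omega)$ is affinely homeomorphic to $[0,1]$, hence is not a singleton, so $(2^\omega, T)$ fails to be uniquely ergodic.

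Next, using the characterization of unique ergodicity recalled at the start of the section, the failure of unique ergodicity produces some $f_0 \in C(2^\omega)$ whose Cesàro averages $\frac{1}{k}\sum_{i=0}^{k-1} T^i f_0$ do not converge uniformly to the constant $\int f_0 \, \mathrm{d}(p(0))$. At this stage the obstruction lives in $C(2^\omega)$, whereas uniformity in the sense of $\mathscr{U}$ concerns convergence in $L^\infty(2^\omega, p(0))$, so the one genuinely load-bearing step is upgrading failure of uniform convergence to failure of $L^\infty(p(0))$-convergence. For this I would argue, exactly as in Lemma \ref{Assani Lemma 3}, that $p(0)$ is strictly positive: its topological support is a nonempty closed $T$-invariant subset of $2^\omega$, and minimality of $T$ forces this support to be all of $2^\omega$. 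Strict positivity then guarantees that the uniform norm and the $L^\infty(p(0))$ norm agree on continuous functions, which lets us conclude that $f_0$ is non-uniform on $(2^\omega, \mathcal{B}, p(0), T)$.

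Finally I would transport this non-uniform function back to $(Y,\mathcal{A},\nu,S)$. Letting $\Phi$ denote the pullback of the essential isomorphism, $\Phi$ preserves integrals and the $L^\infty$ norm and intertwines the two dynamics, so the Cesàro averages of $\Phi f_0$ fail to converge in $L^\infty(Y,\nu)$. Hence $\Phi f_0 \in L^\infty(Y,\nu) \setminus \mathscr{U}(Y,\mathcal{A},\nu,S)$, which gives $\mathscr{U}(Y,\mathcal{A},\nu,S) \neq L^\infty(Y,\nu)$. The main obstacle is precisely the strict-positivity and norm-coincidence step, since everything else is either cited directly (Ormes, the unique ergodicity criterion) or a routine translation of a norm identity along the isomorphism $\Phi$.
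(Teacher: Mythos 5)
Your proposal is correct and follows essentially the same route as the paper's own argument: invoke the Ormes lemma to realize $(Y,\mathcal{A},\nu,S)$ as $\left(2^\omega,\mathcal{B},p(0),T\right)$ with $T$ minimal but not uniquely ergodic, use minimality to conclude that $p(0)$ is strictly positive so that the uniform norm and the $L^\infty\left(2^\omega,p(0)\right)$ norm agree on $C\left(2^\omega\right)$, and then transport the resulting non-uniform continuous function back through the pullback $\Phi$ of the essential isomorphism. The paper's proof is exactly this assembly of the discussion preceding the proposition, including the load-bearing strict-positivity and norm-coincidence step you correctly identified.
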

	
	\section{Non-expansive maps}\label{Non-expansive}
	In this section, as well as in Section \ref{Lipschitz}, we investigate for a certain class of dynamical system $(X, \mathcal{B}, \mu, T)$ what can be said about the convergence properties of $\left( \frac{1}{\mu(F_k)} \int_{F_k} \left( \frac{1}{k} \sum_{i = 0}^{k - 1} T^i f \right) \mathrm{d} \mu \right)_{k = 1}^\infty$ for $f \in C(X)$ when we consider a "probabilistically generic" sequence $(F_k)_{k = 1}^\infty$. In other words, we investigate in some sense a "typical" behavior of \linebreak$\left( \frac{1}{\mu(F_k)} \int_{F_k} \left( \frac{1}{k} \sum_{i = 0}^{k - 1} T^i f \right) \mathrm{d} \mu \right)_{k = 1}^\infty$ for $f \in C(X)$, and find sufficient conditions for this differentiation to converge almost surely to $\int f \mathrm{d} \mu$ for all $f \in C(X)$.
	
	Let $X = (X, \rho)$ be a compact metric space, and $T : X \to X$ a $1$-Lipschitz map, i.e. such that $\rho(Tx, Ty) \leq \rho(x, y)$ for all $x, y \in X$. Let $\mathcal{B}$ denote the Borel $\sigma$-algebra on $X$, and $\mu$ a $T$-invariant, ergodic Borel probability measure on $X$. Then $(X, T)$ has topological entropy $0$, and thus $(X, \mathcal{B}, \mu, T)$ is automatically of entropy $0 < \infty$ \cite[Lemma 1]{Goodman71}. By the Krieger Generator Theorem \cite[2.1]{KriegerGenerator}, the ergodic system admits a finite measurable partition $\mathcal{E} = \{ E_d \}_{d \in \mathcal{D}}$ of $X$ such that $\left\{ T^i E_d : i \in \mathbb{Z}, d \in \mathcal{D} \right\}$ generates the $\sigma$-algebra $\mathcal{B}$, where $\mathcal{D}$ is a finite indexing set. We call $\mathcal{E}$ a \emph{generator} of $(X, \mathcal{B}, \mu, T)$.
	
	Let $d_i : X \to \mathcal{D}, i \in \mathbb{Z}$ be the measurable random variable uniquely determined by the relation
	$$x \in T^{-i} E_{d_i(x)} ,$$
	or equivalently
	$$T^i x \in E_{d_i(x)}$$
	Given a word $\mathbf{a} = (a_0, a_1, \ldots, a_{\ell - 1}) \in \mathcal{D}^\ell$, we define the \emph{cylinder associated to $\mathbf{a}$} by
	$$[a_0, a_1, \ldots, a_{\ell - 1}] : = \bigcap_{i = 0}^{\ell - 1} T^{-i} E_{a_i} .$$
	We also define the \emph{rank-$k$ cylinder associated to $x \in X$} by
	$$C_k(x) : = [d_0(x), d_1(x), \ldots, d_{k - 1}(x)] = \bigcap_{i = 0}^{k - 1} T^{-i} E_{d_i(x)} .$$
	Equivalently, we can define $C_k(x)$ to be the element of $\lor_{i = 0}^{k - 1} T^{-i} \mathcal{E}$ containing $x$.
	
	We note here that $\mu(C_k(x)) > 0$ for all $k \in \mathbb{N}$ for almost all $x \in X$, since
	\begin{align*}
		\{ x \in X : \exists k \in \mathbb{N} \textrm{ s.t. } \mu(C_k(x)) = 0 \}	& = \bigcup_{k \in \mathbb{N}} \left\{ x \in X : \mu(C_k(x)) = 0 \right\} \\
		& = \bigcup_{k \in \mathbb{N}} \left( \bigcup_{ \mathbf{d} \in \mathcal{D}^k \textrm{ s.t. } \mu([\mathbf{d}]) = 0 } [\mathbf{d}] \right)
	\end{align*}
	is a countable union of null sets.
	
	Suppose further that $\operatorname{diam} (C_k(x)) \to 0$ for almost all $x \in X$.	Our main result for this section is the following.
	
	\begin{Thm}\label{Non-expansinve random cylinders}
		Let $X = (X, \rho)$ be a compact metric space, and $T : X \to X$ a $1$-Lipschitz map, i.e. such that $\rho(Tx, Ty) \leq \rho(x, y)$ for all $x, y \in X$. Let $\mathcal{B}$ denote the Borel $\sigma$-algebra on $X$, and $\mu$ a $T$-invariant, ergodic Borel probability measure on $X$. Let $\mathcal{E} = \{ E_d \}_{d \in \mathcal{D}}$ be a finite measurable partition of $X$ which generates $\mathcal{B}$, and let $C_k(x)$ be the element of $\lor_{i = 0}^{k - 1} T^{-i} \mathcal{E}$ containing $x$. Suppose further that $\operatorname{diam}(C_k(x)) \to 0$ for almost all $x \in X$. Then the set of $x \in X$ such that
		$$\frac{1}{\mu(C_k(x))} \int_{C_k(x)} \frac{1}{k} \sum_{i = 0}^{k - 1} T^i f \mathrm{d} \mu \stackrel{k \to \infty}{\to } \int f \mathrm{d} \mu$$
		for all $f \in C(X)$ is of full measure.
	\end{Thm}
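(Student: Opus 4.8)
The plan is to exploit the non-expansiveness of $T$ to show that the Birkhoff average $\frac{1}{k} \sum_{i = 0}^{k - 1} T^i f$ is nearly constant across each cylinder $C_k(x)$, and then to combine this with the pointwise ergodic theorem. Write $A_k f = \frac{1}{k} \sum_{i = 0}^{k - 1} T^i f$ for the Birkhoff average, and recall that $\alpha_{C_k(x)}(A_k f) = \frac{1}{\mu(C_k(x))} \int_{C_k(x)} A_k f \, \mathrm{d}\mu$ in the notation of Notation \ref{Defining alpha}. The crucial observation is that, since $T$ is $1$-Lipschitz, $\operatorname{diam}(T^i(C_k(x))) \leq \operatorname{diam}(C_k(x))$ for every $i \geq 0$: indeed, any two points of $C_k(x)$ are moved to within $\operatorname{diam}(C_k(x))$ of each other by $T^i$. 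Consequently, for any $y \in C_k(x)$ and any $f \in C(X)$ with modulus of continuity $\omega_f$ (which exists because $X$ is compact, so $f$ is uniformly continuous), the points $T^i x$ and $T^i y$ both lie in $T^i(C_k(x))$, giving $|f(T^i y) - f(T^i x)| \leq \omega_f(\operatorname{diam}(C_k(x)))$; averaging over $i$ yields $|A_k f(y) - A_k f(x)| \leq \omega_f(\operatorname{diam}(C_k(x)))$ uniformly over $y \in C_k(x)$.

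Integrating this bound over $C_k(x)$ gives $|\alpha_{C_k(x)}(A_k f) - A_k f(x)| \leq \omega_f(\operatorname{diam}(C_k(x)))$. On the full-measure set where $\operatorname{diam}(C_k(x)) \to 0$ (intersected with the full-measure set where $\mu(C_k(x)) > 0$ for all $k$, established just before the theorem), the right-hand side tends to $0$. Meanwhile, by Birkhoff's pointwise ergodic theorem, $A_k f(x) \to \int f \, \mathrm{d}\mu$ for almost every $x$. Intersecting these two full-measure sets shows that, for each \emph{fixed} $f \in C(X)$, we have $\alpha_{C_k(x)}(A_k f) \to \int f \, \mathrm{d}\mu$ almost everywhere.

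The remaining issue, and the main point requiring care, is upgrading ``for each fixed $f$, almost every $x$'' to ``almost every $x$, for all $f$ simultaneously,'' since $C(X)$ is uncountable. Here I would use that a compact metric $X$ makes $C(X)$ separable: fix a countable dense subset $\{ f_n \} \subseteq C(X)$ and let $\Omega$ be the intersection of the full-measure set on which the diameter condition and the positivity of $\mu(C_k(x))$ hold with the Birkhoff sets for each $f_n$; this $\Omega$ has full measure and the convergence holds on it for every $f_n$. To pass to an arbitrary $f \in C(X)$, note that $\alpha_{C_k(x)} \circ A_k$ is the composition of the norm-one functional $\alpha_{C_k(x)}$ with the $L^\infty$-contraction $A_k$, so $|\alpha_{C_k(x)}(A_k f) - \alpha_{C_k(x)}(A_k f_n)| \leq \| f - f_n \|_\infty$, and likewise $|\int f \, \mathrm{d}\mu - \int f_n \, \mathrm{d}\mu| \leq \| f - f_n \|_\infty$; a standard $3\epsilon$ argument then yields convergence for $f$ on the \emph{same} set $\Omega$. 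The decisive step is the diameter estimate of the first paragraph: it is precisely non-expansiveness that keeps $T^i(C_k(x))$ from spreading out, so that the ergodic average cannot vary appreciably across the shrinking cylinder, and without this control the averaging against $\alpha_{C_k(x)}$ could fail to track the pointwise ergodic average $A_k f(x)$.
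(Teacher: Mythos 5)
Your proof is correct, and its overall architecture is the same as the paper's: control the oscillation of the Birkhoff average $A_k f = \frac{1}{k}\sum_{i=0}^{k-1} T^i f$ over the shrinking cylinder, combine this with the pointwise ergodic theorem to get almost-everywhere convergence for a fixed $f$, then use separability of $C(X)$ together with the fact that $g \mapsto \alpha_{C_k(x)}(A_k g)$ has norm at most $1$ to upgrade to all $f \in C(X)$ simultaneously on a single full-measure set. The one place you genuinely diverge is in how the oscillation estimate is derived, and your version is the cleaner of the two. The paper bounds $\left| T^i f(x) - \alpha_{C_k(x)}\left(T^i f\right) \right|$ by transporting the integral onto the forward image: it rewrites $\frac{1}{\mu(C_k(x))} \int_{C_k(x)} T^i f \, \mathrm{d}\mu$ as $\frac{1}{\mu\left(T^i C_k(x)\right)} \int_{T^i C_k(x)} f \, \mathrm{d}\mu$ and then applies uniform continuity on $T^i C_k(x)$, whose diameter is at most $\operatorname{diam}(C_k(x))$. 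That change of variables tacitly assumes $\mu\left(T^i C_k(x)\right) = \mu(C_k(x))$ and that the forward image is measurable, which is delicate under the theorem's actual hypotheses: $T$ is only assumed to be a $1$-Lipschitz map, not a homeomorphism, and measure invariance gives $\mu\left(T^{-i}A\right) = \mu(A)$, not $\mu\left(T^i A\right) = \mu(A)$. You avoid this entirely by never leaving $C_k(x)$: for $y \in C_k(x)$ you bound $|f(T^i x) - f(T^i y)| \leq \omega_f\left(\operatorname{diam}(C_k(x))\right)$ directly from $\rho(T^i x, T^i y) \leq \rho(x, y)$, and then integrate in $y$ against $\alpha_{C_k(x)}$. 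The two estimates encode the same idea---non-expansiveness prevents the cylinder from spreading under iteration---but yours is valid verbatim for non-invertible $T$ and requires no regularity of forward images, so it buys a modest amount of extra rigor and generality at no cost. The final density step also differs cosmetically (you approximate by a single element of a countable dense set, the paper by a finite linear combination from a countable spanning set), but the two are interchangeable.
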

	
	\begin{proof}
		Since $X$ is compact metrizable, we know that $C(X)$ is a separable vector space, so let $\{ f_n \}_{n \in \mathbb{N}}$ be a countable set in $C(X)$ such that $\overline{\operatorname{span}}\{ f_n \}_{n \in \mathbb{N}} = C(X)$, where the closure is taken in the uniform norm on $C(X)$. Let
		$$S_n = \left\{ x \in X : \frac{1}{k} \sum_{i = 0}^{k - 1} \alpha_{C_k(x)} \left( T^i f_n \right) \to \int f_n \mathrm{d} \mu \right\} .$$
		We claim that $\mu(S_n) = 1$.
		
		Let $x \in X$ such that $\operatorname{diam}(C_k(x)) \to 0$, that $\mu(C_k(x)) > 0$ for all $k \in \mathbb{N}$, and such that $\frac{1}{k} \sum_{i = 0}^{k - 1} T^i f_n (x) \to \int f_n \mathrm{d} \mu$. By the Birkhoff ergodic theorem \cite[Theorem 1.5]{Walters}, the set of all such $x$ is of full measure. Fix $\epsilon > 0$. Since $f_n$ is uniformly continuous, we know there exists $\delta > 0$ such that $\rho(x_1, x_2) \leq \delta \Rightarrow |f_n(x_1) - f_n(x_2)| \leq \frac{\epsilon}{2}$. Choose $K_1 \in \mathbb{N}$ such that $\operatorname{diam}(C_k (x)) \leq \delta$ for all $k \geq K_1$. Choose $K_2 \in \mathbb{N}$ such that $k \geq K_2 \Rightarrow \left| \int f_n \mathrm{d} \mu - \frac{1}{k} \sum_{i = 0}^{k - 1} T^i f_n (x) \right| \leq \frac{\epsilon}{2}$. Let $K = \max \{ K_1, K_2 \}$, and suppose that $k \geq K$. Then
		\begin{align*}
			& \left| \int f_n \mathrm{d} \mu - \frac{1}{k} \sum_{i = 0}^{k - 1} \alpha_{C_k(x)} \left(T^i f_n\right) \right|	\\
			& \leq \left| \int f_n \mathrm{d} \mu - \frac{1}{k} \sum_{i = 0}^{k - 1} T^i f_n(x) \right| + \left| \frac{1}{k} \sum_{i = 0}^{k - 1} T^i f_n (x) - \frac{1}{k} \sum_{i = 0}^{k - 1} \alpha_{C_k(x)} \left( T^i f_n \right) \right| \\
			& \leq \frac{\epsilon}{2} + \frac{1}{k} \sum_{i = 0}^{k - 1} \left| T^i f_n(x) - \frac{1}{\mu(C_k(x))} \int_{C_k(x)} T^i f \right| \\
			& = \frac{\epsilon}{2} + \frac{1}{k} \sum_{i = 0}^{k - 1} \left| \frac{1}{\mu(C_k(x))} \int_{C_k(x)} T^i f_n(x) - T^i f_n \mathrm{d} \mu \right| \\
			& = \frac{\epsilon}{2} + \frac{1}{k} \sum_{i = 0}^{k - 1} \left| \frac{1}{\mu \left( T^i C_k(x) \right)} \int_{T^i C_k(x)} f_n(x) - f_n \mathrm{d} \mu \right| \\
			& \leq \frac{\epsilon}{2} + \frac{1}{k} \sum_{i = 0}^{k - 1} \frac{1}{\mu \left( T^i C_k(x) \right)} \int_{T^i C_k(x)} \left| f_n(x) - f_n \right| \mathrm{d} \mu \\
			& \leq \frac{\epsilon}{2} + \frac{1}{k} \sum_{i = 0}^{k - 1} \frac{1}{\mu \left( T^i C_k(x) \right)} \int_{T^i C_k(x)} \frac{\epsilon}{2} \mathrm{d} \mu \\
			& =  \epsilon ,
		\end{align*}
		since $\operatorname{diam} \left( T^i C_k(x) \right) \leq \operatorname{diam} (C_k(x)) \leq \operatorname{diam} (C_K(x)) < \delta$. Thus if \linebreak$\mu(C_k(x))> 0$ for all $k \in \mathbb{N}$, if $\operatorname{diam}(C_k(x)) \to 0$, and if $\frac{1}{k} \sum_{i = 0}^{k - 1} T^i f_n (x) \to \int f_n \mathrm{d} \mu$, then $x \in E_n$. Thus $\mu(S_n) = 1$ for all $n \in \mathbb{N}$, and so $\mu \left( \bigcap_{n \in \mathbb{N}} S_n \right) = 1$.
		
		We claim now that if $x \in S = \bigcap_{n \in \mathbb{N}} S_n$, then $\frac{1}{k} \sum_{i = 0}^{k - 1} \alpha_{C_k(x)} \left( T^i f \right) \to \int f \mathrm{d} \mu$ for all $f \in C(X)$. Fix $x \in S, f \in C(X), \epsilon > 0$. Then there exist $N \in \mathbb{N}$ and $z_1, \ldots, z_N \in \mathbb{C}$ such that
		$$\left\| f - \sum_{n = 1}^{N} z_n f_n \right\|_\infty < \frac{\epsilon}{3} .$$
		Choose $L_1, \ldots, L_N \in \mathbb{N}$ such that
		$$k \geq L_n \Rightarrow \left| \int f_n \mathrm{d} \mu - \frac{1}{k} \sum_{i = 0}^{k - 1} \alpha_{C_k(x)} \left( T^i f_n \right) \right| < \frac{\epsilon}{3 N \max \{ |z_1|, \ldots, |z_N|, 1 \}} .$$
		Abbreviate $g = \sum_{n = 1}^N z_n f_n$, and let $L = \max \{ L_1, \ldots, L_N \}$. Then if $k \geq L$, then
		\begin{align*}
			& \left| \int f \mathrm{d} \mu - \frac{1}{k} \sum_{i = 0}^{k - 1} \alpha_{C_k(x)} \left( T^i f \right) \right|	\\
			& \leq \left| \int f \mathrm{d} \mu - \int g \mathrm{d} \mu \right| + \left| \int g \mathrm{d} \mu - \frac{1}{k} \sum_{i = 0}^{k - 1} \alpha_{C_k(x)} \left( T^i g \right) \right| \\
			& + \left| \frac{1}{k} \sum_{i = 0}^{k - 1} \alpha_{C_k(x)} \left( T^i (g - f) \right) \right| \\
			& \leq \left\| f - g \right\|_\infty + \sum_{n = 1}^N |z_n| \left| \int f_n \mathrm{d} \mu - \frac{1}{k} \sum_{i = 0}^{k - 1} \alpha_{C_k(x)} \left( T^i f_n \right) \right| \\
			& + \frac{1}{k} \sum_{i = 0}^{k - 1} \left\| g - f \right\|_\infty \\
			& \leq \epsilon .
		\end{align*}
		Thus $x \in S \Rightarrow \lim_{k \to \infty} \frac{1}{k} \sum_{i = 0}^{k - 1} \alpha_{C_k(x)} \left( T^i f \right) = \int f \mathrm{d} \mu$ for all $f \in C(X)$. Since $\mu(S) = 1$, this concludes the proof.
	\end{proof}
	
	\begin{Rmk}
	We remark that the cylindrical structure of the $C_k(x)$ was not essential to our proof of Theorem \ref{Non-expansinve random cylinders}. Rather, the important feature of $(C_k(x))_{k = 1}^\infty$ was that their diameter went to $0$ as $k \to \infty$. To demonstrate this fact, we consider the scenario where we replace the $C_k(x)$ with balls around $x$ of radius decreasing to $0$, and note that the technique of proof is remarkably similar to that used to prove Theorem \ref{Non-expansinve random cylinders}.
	\end{Rmk}
	
	\begin{Thm}\label{Non-expansive random balls}
		Let $X = (X, \rho)$ be a compact metric space, and $T : X \to X$ a $1$-Lipschitz map, i.e. such that $\rho(Tx, Ty) \leq \rho(x, y)$ for all $x, y \in X$. Let $\mathcal{B}$ denote the Borel $\sigma$-algebra on $X$, and $\mu$ a $T$-invariant, ergodic Borel probability measure on $X$. Let $(r_k)_{k = 1}^\infty$ be a non-increasing sequence of positive numbers $r_k > 0$ such that $\lim_{k \to \infty} r_k = 0$. Let $B_k(x) = \{ y \in X : \rho(x, y) < r_k \}$. Then the set of $x \in X$ such that
		$$\frac{1}{\mu(B_k(x))} \int_{B_k(x)} \frac{1}{k} \sum_{i = 0}^{k - 1} T^i f \mathrm{d} \mu \stackrel{k \to \infty}{\to } \int f \mathrm{d} \mu$$
		for all $f \in C(X)$ is of full measure.
	\end{Thm}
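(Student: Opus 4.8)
The plan is to mirror the proof of Theorem~\ref{Non-expansinve random cylinders} almost verbatim, exploiting two simplifications that arise when the sets are metric balls rather than dynamical cylinders. The first is geometric: since $T$ is $1$-Lipschitz, each iterate $T^i$ is $1$-Lipschitz, so for every $w \in B_k(x)$ we have $\rho(T^i x, T^i w) \le \rho(x, w) < r_k$. Because $(r_k)$ is a fixed sequence tending to $0$ independently of $x$, this bound both replaces and strengthens the hypothesis $\operatorname{diam}(C_k(x)) \to 0$ from the cylinder theorem, and it lets me avoid the change of variables through $T^i C_k(x)$ entirely. The second point to dispatch is the positivity of $\mu(B_k(x))$: each $B_k(x)$ is an open neighborhood of $x$, so $\mu(B_k(x)) > 0$ whenever $x \in \operatorname{supp}(\mu)$; and since $X$ is compact metric, hence second countable, $\mu(\operatorname{supp}(\mu)) = 1$. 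Thus $\mu(B_k(x)) > 0$ for all $k$ for $\mu$-a.e.\ $x$, so the states $\alpha_{B_k(x)}$ of Notation~\ref{Defining alpha} are well-defined for a.e.\ $x$.

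With these in hand, I would fix a countable family $\{f_n\}_{n \in \mathbb{N}}$ in $C(X)$ whose linear span is uniformly dense (using separability of $C(X)$), and set
$$S_n = \left\{ x \in \operatorname{supp}(\mu) : \frac{1}{k} \sum_{i=0}^{k-1} \alpha_{B_k(x)}\!\left( T^i f_n \right) \to \int f_n \, \mathrm{d}\mu \right\}.$$
To prove $\mu(S_n) = 1$, take any $x \in \operatorname{supp}(\mu)$ at which the Birkhoff averages $\frac{1}{k}\sum_{i=0}^{k-1} T^i f_n(x) \to \int f_n \, \mathrm{d}\mu$ --- a full-measure condition by the Birkhoff ergodic theorem \cite[Theorem 1.5]{Walters} --- and run the two-term estimate
$$\left| \int f_n \, \mathrm{d}\mu - \frac{1}{k}\sum_{i=0}^{k-1}\alpha_{B_k(x)}\!\left(T^i f_n\right) \right| \le \left| \int f_n \, \mathrm{d}\mu - \frac{1}{k}\sum_{i=0}^{k-1} T^i f_n(x) \right| + \frac{1}{k}\sum_{i=0}^{k-1} \left| T^i f_n(x) - \alpha_{B_k(x)}\!\left(T^i f_n\right) \right|.$$
Given $\epsilon > 0$, uniform continuity of $f_n$ furnishes $\delta$ with $\rho(y,z) \le \delta \Rightarrow |f_n(y) - f_n(z)| \le \epsilon/2$; choosing $K_1$ (uniformly in $x$) so that $r_k \le \delta$ for $k \ge K_1$ gives $|f_n(T^i x) - f_n(T^i w)| \le \epsilon/2$ for every $w \in B_k(x)$ by the Lipschitz bound above, hence $|T^i f_n(x) - \alpha_{B_k(x)}(T^i f_n)| \le \epsilon/2$. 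The first term is $\le \epsilon/2$ for large $k$ by Birkhoff convergence, so the whole expression is eventually $\le \epsilon$. Therefore $\mu(S_n) = 1$, and $\mu(S) = 1$ where $S = \bigcap_{n} S_n$.

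Finally, to upgrade from the dense family to all of $C(X)$, I would fix $x \in S$ and $f \in C(X)$, approximate $f$ within $\epsilon/3$ in uniform norm by $g = \sum_{n=1}^N z_n f_n$, and split
$$\left| \int f \, \mathrm{d}\mu - \frac{1}{k}\sum_{i=0}^{k-1}\alpha_{B_k(x)}\!\left(T^i f\right) \right|$$
into three pieces using that each $\alpha_{B_k(x)}$ is a norm-one functional on $L^\infty(X, \mu)$: the two outer pieces are each bounded by $\|f - g\|_\infty < \epsilon/3$, and the middle piece tends to $0$ because $x \in S_n$ for each $n \le N$. This is exactly the three-epsilon argument concluding the proof of Theorem~\ref{Non-expansinve random cylinders}.

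I expect no genuine obstacle. The two features that made the cylinder theorem delicate --- forcing the sets to shrink in diameter and to retain positive measure --- are automatic for balls, since $\operatorname{diam}(B_k(x)) \le 2 r_k \to 0$ uniformly in $x$ and positivity holds throughout $\operatorname{supp}(\mu)$. The only step warranting explicit attention is the identity $\mu(\operatorname{supp}(\mu)) = 1$, which underwrites a.e.\ positivity of $\mu(B_k(x))$ and follows from second countability of the compact metric space $X$.
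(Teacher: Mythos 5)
Your proposal is correct and follows essentially the same route as the paper's proof: Birkhoff's theorem for the temporal averages, uniform continuity combined with the $1$-Lipschitz property to control the spatial averages over $B_k(x)$ for $x \in \operatorname{supp}(\mu)$, and the countable-dense-family three-epsilon argument to pass from the $f_n$ to all of $C(X)$. The only difference is cosmetic (and in fact a slight improvement): you bound $\left| T^i f_n(x) - \alpha_{B_k(x)}\left(T^i f_n\right) \right|$ directly by averaging the pointwise bound $\left| f_n\left(T^i x\right) - f_n\left(T^i w\right) \right| \le \epsilon/2$ over $w \in B_k(x)$, whereas the paper first pushes the integral forward to $T^i B_k(x)$ --- a change of variables your version does not need, and which strictly speaking presumes an invertibility that the theorem's hypotheses (a $1$-Lipschitz map, not a homeomorphism) do not grant.
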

	
	\begin{proof}
		First we will prove that for an arbitrary $f \in C(X)$, the set of all $x \in X$ such that $\frac{1}{k} \sum_{i = 0}^{k - 1} \alpha_{B_k(x)} \left( T^i f \right) \stackrel{k \to \infty}{\to } \int f \mathrm{d} \mu$ is of full measure. Fix $\epsilon > 0$, and choose $\delta > 0$ such that $\rho(x, y) < \delta \Rightarrow |f(x) - f(y)| < \frac{\epsilon}{2}$ (where we invoke the uniform continuity of $f$). Choose $K_1 \in \mathbb{N}$ such that $r_{K_1} < \delta$. Then if $k \geq K_1 , i \in [0, k - 1]$, we have that $y \in B_k(x) \Rightarrow \rho(x, y) < \delta \Rightarrow \rho\left( T^i x, T^i y \right) < \delta$. Let $x \in \operatorname{supp}(\mu), k \geq K_1$. Then
		\begin{align*}
			\left| T^i f(x) - \alpha_{B_k(x)} \left( T^i f \right) \right|	& = \left| T^i f(x) - \frac{1}{\mu(B_k(x))} \int_{B_k(x)} T^i f \mathrm{d} \mu \right| \\
			& = \left| T^i f(x) - \frac{1}{\mu\left(T^i B_k(x)\right)} \int_{T^i B_k(x)} f \mathrm{d} \mu \right| \\
			& = \left| \frac{1}{\mu \left(T^i B_k(x) \right)} \int_{T^i B_k(x)} \left( T^i f(x) - f \right) \mathrm{d} \mu \right| \\
			& \leq \frac{1}{\mu \left(T^i B_k(x) \right)} \int_{T^i B_k(x)} \left| T^i f(x) - f \right| \mathrm{d} \mu \\
			& \leq \frac{\epsilon}{2} .
		\end{align*}
		
		Let $x \in X \cap \operatorname{supp}(\mu)$ such that $\frac{1}{k} \sum_{i = 0}^{k - 1} T^i f(x) \stackrel{k \to \infty}{\to} \int f \mathrm{d} \mu$. Choose $K_2 \in \mathbb{N}$ such that
		$$k \geq K_2 \Rightarrow \left| \int f \mathrm{d} \mu - \frac{1}{k} \sum_{i = 0}^{k - 1} T^i f(x) \right| < \frac{\epsilon}{2}.$$
		Then if $k \geq \max \{ K_1, K_2 \}$, then
		\begin{align*}
			& \left| \int f \mathrm{d} \mu - \frac{1}{k} \sum_{i = 0}^{k - 1} \alpha_{B_k(x)} \left(T^i f\right) \right|	\\
			& \leq \left| \int f \mathrm{d} \mu - \frac{1}{k} \sum_{i = 0}^{k - 1} T^i f(x) \right| + \left| \frac{1}{k} \sum_{i = 0}^{k - 1} T^i f (x) - \frac{1}{k} \sum_{i = 0}^{k - 1} \alpha_{B_k(x)} \left( T^i \right) \right| \\
			& \leq \frac{\epsilon}{2} + \frac{\epsilon}{2} \\
			& = \epsilon .
		\end{align*}
		The Birkhoff Ergodic Theorem then tells us that the set \linebreak
		$ \left\{ x \in X : \frac{1}{k} \sum_{i = 0}^{k - 1} T^i f(x) \stackrel{k \to \infty}{\to} \int f \mathrm{d} \mu \right\}$
		is of full measure, and so we can intersect it with the support of $\mu$ to get another set of full measure.
		
		We can now use an argument almost identical to that used in the proof of Theorem \ref{Non-expansinve random cylinders} to prove this present theorem. Let $\{ f_n \}_{n \in \mathbb{N}}$ be a countable set in $C(X)$ such that $\overline{\operatorname{span}}\{ f_n \}_{n \in \mathbb{N}} = C(X)$, where the closure is taken in the uniform norm on $C(X)$. Let
		$$S_n = \left\{ x \in X \cap \operatorname{supp}(\mu) : \frac{1}{k} \sum_{i = 0}^{k - 1} \alpha_{B_k(x)} (f_n) \to \int f_n \mathrm{d} \mu \right\} .$$
		As we have already shown, each $S_n$ is of full measure, and thus so is $\bigcap_{n \in \mathbb{N}} S_n$. From here, appealing to the fact that these $\{ f_n \}_{n \in \mathbb{N}}$ generate $C(X)$, we can prove the present theorem.
	\end{proof}
	
	\begin{Rmk}
		Assuming that $\mu(\{x\}) = 0$ for all $x \in X$, then $\mu(B_k(x)) \to 0$ for all $x \in X$.
	\end{Rmk}
	
	\begin{Ex} Theorem \ref{Non-expansive random balls} ceases to be true if we drop the hypothesis that our system is ergodic. Let $X = (X, \rho)$ be a compact metric space, and let $T : X \to X$ be the identity map $T = \operatorname{id}_X$ on $X$. Let $\mu$ be any non-atomic Borel probability measure $\mu$ on $X$ (which is automatically $\operatorname{id}_X$-invariant) that is strictly positive. Fix $x_0 \in X$ and let $f(x) = \rho(x, x_0)$. Let $B_k(x_0) = \{ x \in X : \rho(x, x_0) < 1/k \}$.
		
		We claim that $\int f \mathrm{d} \mu > 0$, but $$\alpha_{B_k(x_0)}\left( \frac{1}{k} \sum_{i = 0}^{k - 1} T^i f \right) \stackrel{k \to \infty}{\to} 0 .$$
		First, we observe that $\mu(B_k(x_0)) > 0$ for all $k \in \mathbb{N}$, since $x_0 \in \operatorname{supp}(\mu)$. However, since $\bigcap_{k = 1}^{\infty} B_k(x_0) = \{x_0\}$, we know that $\mu(B_k(x_0)) \to 0$. Therefore there exists $K \in \mathbb{N}$ such that $0 < \mu (B_K(x_0)) < \mu(B_1(x_0))$. Since $f$ is a nonnegative function, we can then conclude that
		\begin{align*}
			\int f \mathrm{d} \mu	& \geq \int_{B_1(x_0) \setminus B_K(x_0)} f \mathrm{d} \mu \\
			& \geq \mu(B_1(x_0) \setminus B_K(x_0)) \frac{1}{K} \\
			& > 0 .
		\end{align*}
		
		Then
		\begin{align*}
			\alpha_{B_k(x_0)} \left( \frac{1}{k} \sum_{i = 0}^{k - 1} T^i f \right)	& = \alpha_{B_k(x_0)} \left( \frac{1}{k} \sum_{i = 0}^{k - 1} f \right) \\
			& = \alpha_{B_k(x_0)}(f) \\
			& = \frac{1}{\mu(B_k(x_0))} \int_{B_k(x_0)} f \mathrm{d} \mu .
		\end{align*}
		
		However, we can also say that $\left| \alpha_{B_k(x_0)}(f) \right| \leq 1/k$, since
		\begin{align*}
			\left|\alpha_{B_k(x_0)}(f) \right|	& = \left| \frac{1}{\mu(B_k(x_0))} \int_{B_k(x_0)} f \mathrm{d} \mu \right| \\
			& \leq \frac{1}{\mu(B_k(x_0))} \int_{B_k(x_0)} |f| \mathrm{d} \mu \\
			& \leq \frac{1}{\mu(B_k(x_0))} \int_{B_k(x_0)} \frac{1}{k} \mathrm{d} \mu \\
			& = \frac{1}{k} .
		\end{align*}
		Thus $\alpha_{B_k(x_0)} \left( \frac{1}{k} \sum_{i = 0}^{k - 1} T^i f \right) \stackrel{k \to \infty}{\to} 0 \neq \int f \mathrm{d} \mu$.
		
		Thus for \emph{every} $x_0 \in X$ exists $f_{x_0} \in C(X)$ such that
		$$\limsup_{k \to \infty} \left| \int f_{x_0} \mathrm{d} \mu - \alpha_{B_k(x_0)} \left( \frac{1}{k} \sum_{i = 0}^{k - 1} T^i f_{x_0} \right) \right| > 0 .$$
		
		This example highlights how the assumption that $T$ is ergodic is pulling some amount of weight.
	\end{Ex}

	\begin{Rmk}
	In this section, as well as in Sections \ref{Lipschitz} and \ref{Bernoulli shifts and probability}, we focus on continuous functions $f \in C(X)$. Our reason for this is that we can study these $f$ in relation to the topological properties of $(X, T)$.
	\end{Rmk}
	
	\section{Lipschitz maps and subshifts}\label{Lipschitz}
	
	Let us consider a compact \emph{pseudo}metric space $X = (X, p)$, and $T : X \to X$ a map that is Lipschitz of constant $L > 1$, i.e. $p(Tx, Ty) \leq L \cdot p(x, y)$. Recall that a pseudometric is distinguished from a metric by the fact we do not assume that a pseudometric distinguishes points, i.e. we do not assume that $p(x, y) = 0 \Rightarrow x = y$. Suppose that $(X, \mathcal{B}, \mu, T)$ is of finite entropy, and thus admits a generator $\mathcal{E}$. Suppose further that for almost all $x \in X$ exists a constant $\gamma = \gamma_x \in [1, \infty)$ such that
	$$\operatorname{diam}(C_k(x)) \leq \gamma \cdot L^{-k} \; (\forall k \in \mathbb{N}) .$$
	
	We pause to remark on two points. The first is that our consideration of pseudometric spaces is not generality for generality's sake. As we will see later in this section, this consideration of pseudometric spaces will be useful for studying certain metric spaces. The second is that this class of examples is not a direct generalization of the class considered in Section \ref{Non-expansive}. Though every $1$-Lipschitz map is of course Lipschitz for every constant $L > 1$, our condition on $\operatorname{diam}(C_k(x))$ is stronger here, since we ask not just that $\operatorname{diam}(C_k(x))$ go to $0$, but that it do so exponentially.
	
	Since we are working in the slightly unorthodox setting of pseudometric spaces rather than metric spaces, we will prove that one of the strong properties of compact metric spaces is also true of compact \textit{pseudo}metric spaces, namely that every continuous function is uniformly continuous. The proof is essentially identical to the "textbook" argument for compact metric spaces. We doubt this is a new result, but we could not find a reference for it, so we prove it here.
	
	\begin{Lem}
		Let $(X, p)$ be a compact pseudometric space. Then every continuous function $f : X \to \mathbb{C}$ is uniformly continuous.
	\end{Lem}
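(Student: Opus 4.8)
The plan is to run the standard \emph{``continuous on a compact space is uniformly continuous''} argument, but via the open-cover formulation of compactness rather than a sequential one, and to check at each step that we use only the triangle inequality (which a pseudometric satisfies) and never the separation axiom $p(x,y) = 0 \Rightarrow x = y$. The whole content of the lemma is that dropping point-separation costs nothing, so the work is really in confirming that each ingredient of the classical proof survives.

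First I would fix $\epsilon > 0$ and invoke pointwise continuity: for each $x \in X$ there is $\delta_x > 0$ with $p(x, y) < \delta_x \Rightarrow |f(x) - f(y)| < \epsilon/2$. The pseudometric balls $B(x, \delta_x / 2) = \{ y \in X : p(x, y) < \delta_x / 2 \}$ are open in the topology induced by $p$ and cover $X$, so compactness yields a finite subcover $B(x_1, \delta_{x_1}/2), \ldots, B(x_n, \delta_{x_n}/2)$. I would then set $\delta = \tfrac{1}{2} \min_{1 \leq j \leq n} \delta_{x_j}$, which is strictly positive because it is a minimum of finitely many positive numbers. Now suppose $p(x, y) < \delta$. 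Since the balls cover $X$, we have $x \in B(x_j, \delta_{x_j}/2)$ for some $j$, so $p(x, x_j) < \delta_{x_j}/2 < \delta_{x_j}$ and hence $|f(x) - f(x_j)| < \epsilon/2$; and by the triangle inequality $p(y, x_j) \leq p(y, x) + p(x, x_j) < \delta + \delta_{x_j}/2 \leq \delta_{x_j}$, so $|f(y) - f(x_j)| < \epsilon/2$. Combining via $|f(x) - f(y)| \leq |f(x) - f(x_j)| + |f(x_j) - f(y)| < \epsilon$ gives uniform continuity with the single modulus $\delta$ depending only on $\epsilon$.

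The ``main obstacle'' is essentially a sanity check rather than a genuine difficulty: one must confirm that the triangle inequality is the only metric axiom used (it is, both when defining $\delta$ from the cover and when estimating $p(y, x_j)$), that the balls $B(x, \delta_x/2)$ are open in the pseudometric topology so that compactness applies to them as an open cover, and that nowhere does the argument need to separate points lying at pseudometric distance $0$. An alternative sequential argument --- extracting $x_{n_k} \to x$ from hypothetical witnesses $p(x_n, y_n) \to 0$ with $|f(x_n) - f(y_n)| \geq \epsilon$ --- would also work, but it would implicitly require that compactness entails sequential compactness in the pseudometric setting; the covering argument avoids that detour entirely, which is why I would favor it here.
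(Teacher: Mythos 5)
Your proof is correct and follows essentially the same argument as the paper's: the classical open-cover proof using half-radius balls, a finite subcover, the minimum of the half-radii as the uniform modulus, and the triangle inequality, none of which requires the separation axiom. (In fact your version is slightly cleaner, since you define $\delta_x$ with the threshold $\epsilon/2$ from the start, whereas the paper defines it with $\epsilon$ and then silently uses the $\epsilon/2$ bound.)
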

	
	\begin{proof}
		Fix $\epsilon > 0$. Then for every $x \in X$ exists $\delta_x > 0$ such that $p(x, y) < \delta_x \Rightarrow |f(x) - f(y)| < \epsilon$. Then the family $\mathcal{U} = \left\{ B \left( x, \frac{\delta_x}{2} \right) \right\}_{x \in X}$ is an open cover of $X$, so there exists a finite subcover $\mathcal{U}' = \left\{ B \left( x_j, \frac{\delta_{x_j}}{2} \right) \right\}_{j = 1}^n$ of $X$.
		
		Let $\delta' = \min_{1 \leq j \leq n} \frac{\delta_{x_j}}{2}$, and suppose that $x, y \in X$ such that $p(x, y) < \delta'$. Then there exists $x_j \in X$ such that $p(x, x_j) < \frac{\delta_{x_j}}{2}$, since $\mathcal{U}'$ is a cover of $X$. Then
		\begin{align*}
			p(x_j, y)	& \leq p(x_j, x) + p(x, y) \\
			& < \frac{\delta_{x_j}}{2} + \frac{\delta_{x_j}}{2} \\
			& = \delta_{x_j} .
		\end{align*}
		Therefore $p(x_j, x) < \frac{\delta_{x_j}}{2} < \delta_{x_j}, p(x_j, y) < \delta_{x_j}$, so $|f(x) - f(x_j)| < \frac{\epsilon}{2} , |f(y) - f(x_j)| < \frac{\epsilon}{2}$. 
		Thus
		\begin{align*}
			|f(x) - f(y)|	& \leq |f(x) - f(x_j)| + |f(x_j) - f(y)| \\
			& < \frac{\epsilon}{2} + \frac{\epsilon}{2} \\
			& = \epsilon .
		\end{align*}
		
		Therefore $\delta' > 0$ is such that $p(x, y) < \delta' \Rightarrow |f(x) - f(y)| < \epsilon$. Thus we have shown that $f$ is uniformly continuous.
	\end{proof}
	
	Now we are able to both state and prove the first main result of this section.
	
	\begin{Prop}\label{Random Lipschitz Cylinders}
		Let $(X, p)$ be a compact pseudometric space, and let $T : X \to X$ be an $L$-Lipschitz homeomorphism on $X$ with respect to $p$, where $L > 1$. Suppose $\mu$ is a regular Borel probability measure on $X$ such that $T$ is ergodic with respect to $\mu$. Let $\mathcal{E} = \{ E_d \}_{d \in \mathcal{D}}$ be a generator of $(X, T)$ such that for almost all $x \in X$ exists $\gamma_x \in \mathbb{R}$ such that $\operatorname{diam}(C_k(x)) \leq \gamma_x \cdot L^{-k}$ for all $k \in \mathbb{N}$. Fix $f \in C(X)$. Then
		$$\frac{1}{k} \sum_{i = 0}^{k - 1} \alpha_{C_k(x)} \left( T^i f \right) \stackrel{k \to \infty}{\to } \int f \mathrm{d} \mu$$
		for almost all $x \in X$.
	\end{Prop}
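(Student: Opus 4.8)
The plan is to adapt the pointwise argument used in the proof of Theorem \ref{Non-expansinve random cylinders}, but with an essential modification to accommodate the fact that $T$ now \emph{expands} distances. Since $f$ is fixed, there is no need to invoke separability of $C(X)$; it suffices to establish convergence on a single full-measure set. First I would restrict attention to the set of $x \in X$ for which simultaneously (i) $\gamma_x < \infty$, so that $\operatorname{diam}(C_k(x)) \le \gamma_x L^{-k}$ for all $k$; (ii) $\mu(C_k(x)) > 0$ for all $k \in \mathbb{N}$; and (iii) the Birkhoff average $\frac{1}{k}\sum_{i=0}^{k-1} T^i f(x)$ converges to $\int f\,\mathrm{d}\mu$. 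By hypothesis, the Birkhoff ergodic theorem \cite[Theorem 1.5]{Walters}, and the fact that the set on which some $\mu(C_k(x)) = 0$ is a countable union of null cylinders, each of these conditions holds almost surely, so their intersection has full measure; I would fix such an $x$ for the remainder.

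The starting point is the measure-preservation identity $\alpha_{C_k(x)}(T^i f) = \alpha_{T^i C_k(x)}(f)$, obtained from the change of variables $z = T^i y$ together with $\mu(C_k(x)) = \mu(T^i C_k(x))$. Since $T^i x \in T^i C_k(x)$, this exhibits $\alpha_{C_k(x)}(T^i f)$ as an average of $f$ over a set containing $T^i x$, hence close to $T^i f(x) = f(T^i x)$ whenever $\operatorname{diam}(T^i C_k(x))$ is small. Splitting the target via the triangle inequality,
\begin{align*}
	\left| \int f\,\mathrm{d}\mu - \frac{1}{k}\sum_{i=0}^{k-1} \alpha_{C_k(x)}(T^i f) \right| & \le \left| \int f\,\mathrm{d}\mu - \frac{1}{k}\sum_{i=0}^{k-1} T^i f(x) \right| \\
	& \quad + \frac{1}{k}\sum_{i=0}^{k-1} \left| T^i f(x) - \alpha_{T^i C_k(x)}(f) \right| ,
\end{align*}
the first term vanishes as $k \to \infty$ by (iii), so the whole problem reduces to controlling the second sum.

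The main obstacle, and the point of departure from the non-expansive case, is that $L$-Lipschitzness only yields $\operatorname{diam}(T^i C_k(x)) \le L^i \operatorname{diam}(C_k(x)) \le \gamma_x L^{i - k}$, which is small only when $k - i$ is large; for $i$ near $k$ this bound is useless, so the uniform bound $\operatorname{diam}(T^i C_k(x)) \le \operatorname{diam}(C_k(x))$ exploited in Theorem \ref{Non-expansinve random cylinders} is no longer available. To handle this I would introduce a fixed cutoff: given $\epsilon > 0$, use uniform continuity of $f$ on the compact pseudometric space $X$ (by the preceding lemma) to pick $\delta > 0$ with $p(y,z) < \delta \Rightarrow |f(y) - f(z)| < \epsilon$, and then fix $m = m(x,\epsilon) \in \mathbb{N}$ with $\gamma_x L^{-m} < \delta$. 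For the good indices $0 \le i \le k - m$ one has $\operatorname{diam}(T^i C_k(x)) \le \gamma_x L^{-m} < \delta$, whence $|T^i f(x) - \alpha_{T^i C_k(x)}(f)| \le \epsilon$; for the remaining $i$ one uses only the crude bound $2\|f\|_\infty$. Since $m$ is held fixed while $k \to \infty$, the second block contributes at most $\frac{m}{k}\cdot 2\|f\|_\infty \to 0$, while the first block contributes at most $\epsilon$. Letting $k \to \infty$ and then $\epsilon \to 0$ yields the claim. The crux is thus the observation that the uncontrolled tail near $i = k$ consists of only boundedly many terms and is therefore negligible against the normalization $1/k$.
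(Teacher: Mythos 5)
Your proposal is correct and follows essentially the same route as the paper's proof: the same triangle-inequality split into a Birkhoff term and an averaging term, the same measure-preservation identity $\alpha_{C_k(x)}(T^i f) = \alpha_{T^i C_k(x)}(f)$, and the same cutoff device (your $m$ is the paper's $\kappa$) separating the indices $i \le k - m$, where $\operatorname{diam}(T^i C_k(x)) \le \gamma_x L^{i-k} < \delta$, from the boundedly many tail indices handled by the crude bound $2\|f\|_\infty$. The only differences are cosmetic bookkeeping (the paper fixes explicit $\epsilon/4$ thresholds and a $K_1 > \kappa$, whereas you take $\limsup_{k}$ and then let $\epsilon \to 0$).
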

	
	\begin{proof}
	Our goal is to show that for every $\epsilon > 0$, there exists some $K \in \mathbb{N}$ such that if $k \geq K$, we have
	\begin{align*}
		\left| \int f \mathrm{d} \mu - \frac{1}{k} \sum_{i = 0}^{k - 1} \alpha_{C_k(x)} \left( T^i f \right) \right|	\\
		\leq \left| \int f \mathrm{d} \mu - \frac{1}{k} \sum_{i = 0}^{k - 1} \left( T^i f \right)(x) \right| + \left| \frac{1}{k} \sum_{i = 0}^{k - 1} \left( \left( T^i f \right)(x) - \alpha_{C_k(x)} \left( T^i f \right) \right) \right| \\
		\leq \left| \int f \mathrm{d} \mu - \frac{1}{k} \sum_{i = 0}^{k - 1} \left( T^i f \right)(x) \right| + \frac{1}{k} \sum_{i = 0}^{k - 1} \left| \left( T^i f \right)(x) - \alpha_{C_k(x)} \left( T^i f \right) \right| \\
		\leq \epsilon .
	\end{align*}
	We will accomplish this by bounding the terms
	$$\left| \int f \mathrm{d} \mu - \frac{1}{k} \sum_{i = 0}^{k - 1} \left( T^i f \right)(x) \right| , \; \frac{1}{k} \sum_{i = 0}^{k - 1} \left| \left( T^i f \right)(x) - \alpha_{C_k(x)} \left( T^i f \right) \right| $$
	by $\epsilon$.
	
		We will start with bounding the latter term. We claim that if $x \in X$ such that $\mu(C_k(x)) > 0, \operatorname{diam}(C_k(x)) \leq \gamma_x \cdot L^{-k}$ for all $k \in \mathbb{N}$, then for every $\epsilon > 0$, there exists $K_1 \in \mathbb{N}$ such that
		$$k \geq K_1 \Rightarrow \frac{1}{k} \sum_{i = 0}^{k - 1} \left| \left( T^i f \right) (x) - \alpha_{C_k(x)} \left( T^i f \right) \right| < \frac{\epsilon}{2} .$$
		To prove this, choose $\delta > 0$ such that $p(y, z) < \delta \Rightarrow |f(y) - f(z)| < \frac{\epsilon}{4}$. Let $\kappa \in \mathbb{N}$ such that $\gamma_x \cdot L^{-\kappa} < \delta$. Then if $k > \kappa$, then
		\begin{align*}
		\frac{1}{k} \sum_{i = 0}^{k - 1} \left| \left( T^i f \right)(x) - \alpha_{C_k(x)} \left( T^i f \right) \right|	& = \frac{1}{k} \left[ \sum_{i = 0}^{k - \kappa} \left| \left( T^i f \right)(x) - \alpha_{C_k(x)} \left( T^i f \right) \right| \right] \\
			& + \frac{1}{k} \left[ \sum_{k - \kappa + 1}^{k - 1} \left| \left( T^i f \right)(x) - \alpha_{C_k(x)} \left( T^i f \right) \right| \right] .
		\end{align*}
		We will estimate these two terms separately, bounding each by $\frac{\epsilon}{4}$. Beginning with the former, we observe that if $x, y \in C_k(x)$, then
		$$p \left( T^{i} x, T^i y \right) \leq L^{i} p(x, y) \leq L^i \cdot \gamma_x \cdot L^{-k} = \gamma_{x} \cdot L^{i - k} .$$
		In particular, this means that if $i - k \leq - \kappa$, then $\left| \left( T^i f \right)(x) - f(z) \right| < \frac{\epsilon}{4}$ for all $z = T^i y \in T^i C_{k}(x)$, so
		\begin{align*}
			& \frac{1}{k} \left[ \sum_{i = 0}^{k - \kappa} \left| \left( T^i f \right)(x) - \alpha_{C_k(x)} \left( T^i f \right) \right| \right]	\\
			& = \frac{1}{k} \left[ \sum_{i = 0}^{k - \kappa} \left| \frac{1}{\mu(C_k(x))} \int_{C_k(x)} \left( \left( T^i f \right)(x) \right) - T^i f \mathrm{d} \mu \right| \right] \\
			& = \frac{1}{k} \left[ \sum_{i = 0}^{k - \kappa} \frac{1}{\mu\left(T^i C_k(x)\right)} \int_{T^i C_k(x)} \left| \left( T^i f \right) (x) - f \right| \mathrm{d} \mu \right] \\
			& \leq \frac{1}{k} \left[ \sum_{i = 0}^{k - \kappa} \frac{1}{\mu\left(T^i C_k(x)\right)} \int_{T^i C_k(x)} \frac{\epsilon}{4} \mathrm{d} \mu \right] \\
			& = \frac{k - \kappa + 1}{k} \frac{\epsilon}{4} \\
			& \leq \frac{\epsilon}{4} .
		\end{align*}
		On the other hand, we can estimate
		$$\frac{1}{k} \left[ \sum_{k - \kappa + 1}^{k - 1} \left| \left( T^i f \right)(x) - \alpha_{C_k(x)} \left( T^i f \right) \right| \right]  \leq \frac{2 \kappa}{k} \| f \| .$$
		Choose $K_1 > \kappa$ such that $\frac{2 \kappa \left\| f \right\|_\infty}{K_1} < \frac{\epsilon}{4}$. Then if $k \geq K_1$, we have
		\begin{align*}
			\frac{1}{k} \sum_{i = 0}^{k - 1} \left| \left( T^i f \right)(x) - \alpha_{C_k(x)} \left( T^i f \right) \right|	& = \frac{1}{k} \left[ \sum_{i = 0}^{k - \kappa} \left| \left( T^i f \right)(x) - \alpha_{C_k(x)} \left( T^i f \right) \right| \right] \\
			& + \frac{1}{k} \left[ \sum_{k - \kappa + 1}^{k - 1} \left| \left( T^i f \right)(x) - \alpha_{C_k(x)} \left( T^i f \right) \right| \right] \\
			& \leq \frac{\epsilon}{4} + \frac{\epsilon}{4} \\
			& = \frac{\epsilon}{2} .
		\end{align*}
	
		Now suppose further that $x \in X$ is such that $\frac{1}{k} \sum_{i = 0}^{k - 1} \left( T^i f \right)(x) \stackrel{k \to \infty}{\to} \int f \mathrm{d} \mu$. Choose $K_2 \in \mathbb{N}$ such that $k \geq K_2 \Rightarrow \left| \int f \mathrm{d} \mu - \frac{1}{k} \sum_{i = 0}^{k - 1} \left( T^i f \right)(x) \right| < \frac{\epsilon}{2}$. Then if $k \geq \max \{ K_1, K_2 \}$, then we have
		\begin{align*}
			\frac{1}{k} \sum_{i = 0}^{k - 1} \left| \left( T^i f \right)(x) - \alpha_{C_k(x)} \left( T^i f \right) \right|	& = \frac{1}{k} \left[ \sum_{i = 0}^{k - \kappa} \left| \left( T^i f \right)(x) - \alpha_{C_k(x)} \left( T^i f \right) \right| \right] \\
			& + \frac{1}{k} \left[ \sum_{k - \kappa + 1}^{k - 1} \left| \left( T^i f \right)(x) - \alpha_{C_k(x)} \left( T^i f \right) \right| \right] \\
			& \leq \frac{\epsilon}{2} + \frac{\epsilon}{2} \\
			& = \epsilon .
		\end{align*}
	Since the set of $x \in X$ for which this calculation could be performed is of full measure, the proposition follows.
	\end{proof}
	
	From here, we get the following corollary.
	
	\begin{Cor}
		Let $(X, \rho)$ be a compact metric space, and let $T : X \to X$ be an $L$-Lipschitz homeomorphism on $X$ with respect to $\rho$, where $L > 1$. Suppose $\mu$ is a regular Borel probability measure on $X$ such that $T$ is ergodic with respect to $\mu$. Let $\mathcal{E} = \{ E_d \}_{d \in \mathcal{D}}$ be a generator of $(X, T)$ such that for almost all $x \in X$ exists $\gamma_x \in \mathbb{R}$ such that $\operatorname{diam}(C_k(x)) \leq \gamma_x \cdot L^{-k}$ for all $k \in \mathbb{N}$. Then the set of $x \in X$ such that
		$$\frac{1}{k} \sum_{i = 0}^{k - 1} \alpha_{C_k(x)} \left( T^i f \right) \stackrel{k \to \infty}{\to } \int f \mathrm{d} \mu$$
		for all $f \in C(X)$ is of full measure.
	\end{Cor}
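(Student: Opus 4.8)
The plan is to deduce the corollary from Proposition \ref{Random Lipschitz Cylinders} by the same quantifier-swapping device used in the proof of Theorem \ref{Non-expansinve random cylinders}. The only gap between the proposition and the corollary is the order of quantifiers: the proposition fixes a single $f \in C(X)$ and produces a full-measure set of good $x$ (a set that may depend on $f$), whereas the corollary asks for one full-measure set of $x$ that works simultaneously for \emph{every} $f \in C(X)$. Since $X$ is a compact metric space, $C(X)$ is separable in the uniform norm, so I would first fix a countable set $\{ f_n \}_{n \in \mathbb{N}} \subseteq C(X)$ with $\overline{\operatorname{span}} \{ f_n \}_{n \in \mathbb{N}} = C(X)$.

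Next, for each $n \in \mathbb{N}$ let $S_n = \left\{ x \in X : \frac{1}{k} \sum_{i = 0}^{k - 1} \alpha_{C_k(x)} \left( T^i f_n \right) \to \int f_n \mathrm{d} \mu \right\}$. Applying Proposition \ref{Random Lipschitz Cylinders} to $f_n$ gives $\mu(S_n) = 1$ for each $n$, and since a countable intersection of full-measure sets is again of full measure, the set $S = \bigcap_{n \in \mathbb{N}} S_n$ satisfies $\mu(S) = 1$. Thus it suffices to show that every $x \in S$ is good for all of $C(X)$, and not merely for the generating functions $f_n$.

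To establish this last point, fix $x \in S$, $f \in C(X)$, and $\epsilon > 0$, and choose a finite linear combination $g = \sum_{n = 1}^N z_n f_n$ with $\left\| f - g \right\|_\infty < \frac{\epsilon}{3}$. The crucial structural fact is that each $\alpha_{C_k(x)}$ is a state on $L^\infty(X, \mu)$, hence a functional of norm one; this controls the two approximation errors, since $\left| \frac{1}{k} \sum_{i = 0}^{k - 1} \alpha_{C_k(x)} \left( T^i (g - f) \right) \right| \leq \left\| g - f \right\|_\infty$ and likewise $\left| \int f \mathrm{d} \mu - \int g \mathrm{d} \mu \right| \leq \left\| f - g \right\|_\infty$. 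Splitting $\left| \int f \mathrm{d} \mu - \frac{1}{k} \sum_{i = 0}^{k - 1} \alpha_{C_k(x)} \left( T^i f \right) \right|$ by the triangle inequality into these two approximation terms plus the middle term $\left| \int g \mathrm{d} \mu - \frac{1}{k} \sum_{i = 0}^{k - 1} \alpha_{C_k(x)} \left( T^i g \right) \right|$, then expanding the middle term by linearity and using $x \in S_n$ for each of the finitely many $n \leq N$ to force it below $\frac{\epsilon}{3}$ once $k$ is large, yields the desired convergence. This is exactly the approximation argument carried out at the end of the proof of Theorem \ref{Non-expansinve random cylinders}, so I expect no genuine obstacle; the only subtlety is the bookkeeping of the constants $z_n$ in the middle term, which is absorbed cleanly by the norm-one bound on $\alpha_{C_k(x)}$.
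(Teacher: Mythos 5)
Your proposal is correct and is essentially the paper's own proof: the paper likewise applies Proposition \ref{Random Lipschitz Cylinders} to a countable set $\{f_n\}$ with $\overline{\operatorname{span}}\{f_n\} = C(X)$, intersects the resulting full-measure sets, and extends to all of $C(X)$ via the same triangle-inequality/norm-one-state approximation argument from the proof of Theorem \ref{Non-expansinve random cylinders}. The only difference is presentational: the paper cites that earlier argument rather than rewriting it, whereas you spell it out explicitly.
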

	
	\begin{proof}
		Let $\{ f_n \}_{n \in \mathbb{N}}$ be a countable set in $C(X)$ such that \linebreak$C(X) = \overline{\operatorname{span}} \{ f_n \}_{n \in \mathbb{N}}$. By the previous result, we can extrapolate that the set of $x \in X$ such that $\frac{1}{k} \sum_{i = 0}^{k - 1} \alpha_{C_k(x)} \left( T^i f_n \right) \stackrel{k \to \infty}{\to } \int f_n \mathrm{d} \mu$ is of full measure. We can then extend to all of $C(X)$ in the same manner as we did in the proof of Theorem \ref{Non-expansinve random cylinders}.
	\end{proof}
	
	\subsection{Two-sided subshifts and systems of finite entropy}
	
	This brings us to the matter of (two-sided) subshifts. Let $\mathcal{D}$ be a finite discrete set, and let $T : \mathcal{D}^\mathbb{Z} \to \mathcal{D}^\mathbb{Z}$ be the map $(Tx)_n = x_{n + 1}$, called the \emph{left shift}. We call $X \subseteq \mathcal{D}^\mathbb{Z}$ a \emph{subshift} if $X$ is compact and $T X = X$. Assume that $\mu$ is a Borel probability measure on $X$ with respect to which $T$ is ergodic.
	
	In a shift space, we will always take our generator to be the family $\mathcal{E} = \{ E_d \}_{d \in \mathcal{D}}$ of sets $E_d = \{ x \in X : x_0 = d \} , d \in \mathcal{D}$. We claim that for almost all $x \in X$, we have
	$$\lim_{k \to \infty} \frac{1}{k} \sum_{i = 0}^{k - 1} \alpha_{C_k(x)} \left( T^i f \right) = \int f \mathrm{d} \mu$$
	for all $f \in C(X)$. First, we want to establish the following lemma.
	
	\begin{Lem}\label{Cylinders dense}
		Let $(X, \mathcal{F}, \mu, T)$ be a subshift, where $X \subset \mathcal{D}^\mathbb{Z}$. The family
		$$\mathcal{F} = \left\{ T^n \chi_{[a_0, a_1, \ldots, a_{\ell - 1}]} : {(a_0, a_1, \ldots, a_{\ell - 1}) \in \mathcal{D}^\ell , \ell \in \mathbb{N}, i \in \mathbb{Z}} \right\}$$
		generates $C(X)$ in the sense that its span is dense in $C(X)$ with respect to the uniform norm.
	\end{Lem}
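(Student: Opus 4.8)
The plan is to apply the (complex) Stone--Weierstrass theorem to the linear span $\mathcal{A} := \operatorname{span} \mathcal{F}$. First I would record the basic observation that, in the product topology on $\mathcal{D}^{\mathbb{Z}}$, every cylinder $[a_0, \ldots, a_{\ell - 1}]$ is clopen, so its indicator is continuous, and that the generator $T^n \chi_{[a_0, \ldots, a_{\ell - 1}]}$ is precisely the indicator of the block cylinder $\{ x \in X : x_{n + i} = a_i \text{ for } 0 \le i \le \ell - 1 \}$, which constrains the consecutive coordinates $n, n + 1, \ldots, n + \ell - 1$. Thus as $n$ ranges over $\mathbb{Z}$ and $\ell$ over $\mathbb{N}$, the family $\mathcal{F}$ consists of the indicators of all cylinders constraining a finite block of consecutive coordinates (allowing negative indices), and $\mathcal{A}$ is the space of their finite linear combinations. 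Since $X$ is compact Hausdorff, it then suffices to verify that $\mathcal{A}$ is a subalgebra that contains the constants, is closed under complex conjugation, and separates the points of $X$.

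The main step, and the only one requiring care, is that $\mathcal{A}$ is closed under multiplication. The product of two block-cylinder indicators is the indicator of the intersection of the two cylinders: it is $0$ (when the blocks overlap and disagree on a shared coordinate) or the indicator of the cylinder constraining the union of the two coordinate blocks. If the blocks overlap or abut, that union is again an interval of consecutive coordinates, so the product lies in $\mathcal{F}$ (or is $0$), hence in $\mathcal{A}$. The subtlety arises only when the two blocks are separated by a gap, so that their union is not an interval. In this case I would \emph{fill the gap}, writing the indicator of the resulting non-contiguous cylinder as the finite sum, over all assignments of the finitely many alphabet symbols to the gap coordinates, of the corresponding block-cylinder indicators; each summand constrains a genuine interval of coordinates and so lies in $\mathcal{F}$, whence the sum lies in $\mathcal{A}$. (Within the subshift $X$, summands corresponding to forbidden gap values simply contribute the zero function, which is harmless.) This establishes $\mathcal{A} \cdot \mathcal{A} \subseteq \mathcal{A}$.

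The remaining hypotheses are immediate. The constant function $1$ equals $\sum_{d \in \mathcal{D}} \chi_{[d]} = \sum_{d \in \mathcal{D}} T^0 \chi_{[d]}$, a finite sum of generators, so $1 \in \mathcal{A}$. The algebra separates points: if $x \neq y$ in $X$, then $x_m \neq y_m$ for some $m \in \mathbb{Z}$, and the single-coordinate generator $T^m \chi_{[x_m]}$ takes the value $1$ at $x$ and $0$ at $y$. Finally, every generator is real-valued, so $\mathcal{A}$ is closed under complex conjugation. Stone--Weierstrass then yields $\overline{\mathcal{A}} = C(X)$ in the uniform norm, which is exactly the assertion of the lemma.

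I expect the gap-filling observation in the multiplication step to be the only genuine obstacle; checking the constants, point-separation, and conjugation hypotheses is routine, and the continuity of the cylinder indicators is a standard feature of the product topology on a shift space.
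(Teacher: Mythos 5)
Your proof is correct, but it takes a genuinely different route from the paper. You verify the hypotheses of the complex Stone--Weierstrass theorem for the span $\mathcal{A}$ of $\mathcal{F}$: the only nontrivial point is closure under multiplication, which you handle with the gap-filling identity
$$\chi_{C_1 \cap C_2} = \sum_{\mathbf{d} \in \mathcal{D}^{G}} \chi_{C_{\mathbf{d}}},$$
where $G$ is the set of coordinates separating the two blocks and $C_{\mathbf{d}}$ is the consecutive-block cylinder obtained by filling the gap with the word $\mathbf{d}$; this is valid (it holds pointwise even on the ambient full shift, and empty summands contribute zero on the subshift), and constants, point separation, and self-adjointness are indeed routine. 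The paper instead argues directly: for real $f$ it forms the explicit approximants $g_\ell = \sum_{\mathbf{a} \in \mathcal{D}^{2\ell-1}} \min\{f(y) : y \in A(\mathbf{a})\}\,\chi_{A(\mathbf{a})}$, where $A(\mathbf{a})$ is the cylinder fixing coordinates $-\ell+1, \ldots, \ell-1$, observes that $g_\ell \nearrow f$ pointwise by continuity of $f$, and invokes Dini's theorem on the compact space $X$ to upgrade to uniform convergence, finishing by splitting complex $f$ into real and imaginary parts. Your approach buys a cleaner black-box argument and, as a by-product, shows that $\operatorname{span}\mathcal{F}$ is actually a $*$-algebra containing the indicators of \emph{all} finite-coordinate cylinders (not just consecutive blocks), which is of independent interest; the paper's approach is more elementary (no Stone--Weierstrass needed) and produces an explicit monotone approximating sequence, at the cost of the slightly delicate Dini step. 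One small point your write-up glosses over, shared with the paper: when the subshift is proper, some cylinders $A(\mathbf{a})$ (or gap-filled cylinders) are empty, and one should simply drop those terms; you do note this, so no gap results.
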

	
	\begin{proof}
		We claim that every $f \in C(X)$ can be approximated uniformly by elements of $\operatorname{span} \mathcal{F}$. We will begin by demonstrating the result for real $f \in C(X)$, then extrapolate the result to all complex-valued $f \in C(X)$.
		
		For $\ell \in \mathbb{N}$, set
		\begin{align*}
			& A(a_{-\ell + 1}, a_{-\ell + 1}, \ldots, a_{-1}, a_0, a_1, \ldots, a_{\ell - 1}, a_{\ell - 1}) \\
			& = \{ x \in X : x_{j} = a_j \; \forall j \in [-\ell + 1, \ell - 1] \} \\
			& = T^{-\ell + 1} [a_{-\ell + 1}, a_{-\ell + 2}, \ldots, a_{-1}, a_0, a_1, \ldots, a_{\ell - 1}, a_{\ell - 1}]
		\end{align*}
		and let
		\begin{align*}
			g_\ell	& = \sum_{\mathbf{a} \in \mathcal{D}^{2 \ell - 1}} \min \left\{ f(y) : y \in A(\mathbf{a}) \right\} \chi_{A(\mathbf{a})} \\
			& = \sum_{\mathbf{a} \in \mathcal{D}^{2 \ell - 1}} \min \left\{ f(y) : y \in T^{-\ell +1} [\mathbf{a}] \right\} \chi_{T^{-\ell} [\mathbf{a}]} \\
			& \in \operatorname{span} \mathcal{F} .
		\end{align*}
		We claim that $g_\ell \to f$ uniformly. The sequence $\ell \mapsto g_\ell$ is monotonic increasing. Moreover, we claim that it converges pointwise to $f$. To see this, let $x \in X$, and consider $g_\ell(x)$. Fix $\epsilon > 0$. Then for each $\ell \in \mathbb{N}$ exists $y^{(\ell)} \in X$ such that $g_\ell(x) = f\left( y^{(\ell)} \right)$. However, since $y_j^{(\ell)} = x_j$ for all $j \in [-\ell + 1, \ell - 1]$, we can conclude that $y^{(\ell)} \to x$, and so by continuity of $f$, we can conclude that $g_\ell(x) = f \left( y^{(\ell)} \right) \to f(x)$. Thus $g_\ell \nearrow f$ pointwise. Dini's Theorem then gives us uniform convergence. Therefore, if $f \in C(X)$ is real-valued, then $f \in \overline{\operatorname{span}} \mathcal{F}$. On the other hand, any complex-valued function $f \in C(X)$ can be expressed as the sum of its real and imaginary parts, and we can apply this argument to both of those parts separately.
	\end{proof}
	
	\begin{Thm}\label{Metric result for subshifts}
		Let $X \subseteq \mathcal{D}^\mathbb{Z}$ be a subshift, and let $\mu$ be a Borel probability measure on $X$ with respect to which the left shift $T$ is ergodic. Then the set of all $x \in X$ such that
		$$\frac{1}{k} \sum_{i = 0}^{k - 1} \alpha_{C_k(x)} \left( T^i f \right) \to \int f \mathrm{d} \mu$$
		for all $f \in C(X)$ is of full measure.
	\end{Thm}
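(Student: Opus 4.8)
The plan is to reduce everything to Proposition \ref{Random Lipschitz Cylinders} by equipping $X$ with a pseudometric that only ``sees'' the non-negative coordinates. The obstruction to applying the earlier machinery directly is that in the genuine (two-sided) metric on $\mathcal{D}^{\mathbb{Z}}$ the cylinders $C_k(x)$ do not shrink: $C_k(x)$ fixes only the coordinates $0, \dots, k-1$, leaving the negative coordinates free, so $\operatorname{diam}(C_k(x))$ stays bounded below. To fix this I would define $p(x,y) = 2^{-\min\{ n \geq 0 : x_n \neq y_n \}}$, with the convention that the exponent is $+\infty$ (hence $p = 0$) when $x$ and $y$ agree on all non-negative coordinates. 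Then $(X, p)$ is a compact pseudometric space, the shift $T$ is $2$-Lipschitz with respect to $p$ (a first non-negative disagreement at position $n \geq 1$ becomes one at position $n - 1$ after applying $T$), and since any $y, z \in C_k(x)$ agree on coordinates $0, \dots, k-1$ one has $\operatorname{diam}_p(C_k(x)) \leq 2^{-k}$; thus the hypotheses of Proposition \ref{Random Lipschitz Cylinders} hold with $L = 2$ and $\gamma_x = 1$ for every $x$. The standard generator $\mathcal{E} = \{E_d\}$ qualifies since $\{ T^i E_d : i \in \mathbb{Z}, d \in \mathcal{D}\}$ generates $\mathcal{B}$, and $\mu$ is automatically regular on the compact metrizable space $X$.

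With this pseudometric in place, Proposition \ref{Random Lipschitz Cylinders} applies to any $p$-continuous function, and the $p$-continuous functions are exactly those depending only on the non-negative coordinates. In particular every cylinder indicator $\chi_{[\mathbf{a}]}$, $\mathbf{a} = (a_0, \dots, a_{\ell - 1})$, is $p$-continuous (indeed $p$-locally constant), so for each fixed word $\mathbf{a}$ the set of $x$ with $\frac{1}{k}\sum_{i=0}^{k-1}\alpha_{C_k(x)}(T^i \chi_{[\mathbf{a}]}) \to \int \chi_{[\mathbf{a}]}\, \mathrm{d}\mu$ is of full measure.

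The remaining point is that the generating family $\mathcal{F}$ of Lemma \ref{Cylinders dense} also contains the shifts $T^n \chi_{[\mathbf{a}]}$ with $n < 0$, which depend on negative coordinates and hence are not $p$-continuous. I would handle these by a shift-insensitivity argument: for fixed $n \in \mathbb{Z}$ and $h \in L^\infty$,
$$\frac{1}{k}\sum_{i=0}^{k-1} T^i (T^n h) = \frac{1}{k}\sum_{j=n}^{n+k-1} T^j h ,$$
which differs from $\frac{1}{k}\sum_{j=0}^{k-1} T^j h$ by at most $\frac{2|n|}{k}\|h\|_\infty$ in $\|\cdot\|_\infty$. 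Applying the state $\alpha_{C_k(x)}$ (of norm $1$ on $L^\infty$) and using $\int T^n h\, \mathrm{d}\mu = \int h\, \mathrm{d}\mu$, convergence of $\frac{1}{k}\sum_{i=0}^{k-1}\alpha_{C_k(x)}(T^i h)$ to $\int h\, \mathrm{d}\mu$ forces the same convergence for $T^n h$ on the very same set of $x$. Taking $h = \chi_{[\mathbf{a}]}$ yields a full-measure convergence set for every element of $\mathcal{F}$, and intersecting these countably many sets over all pairs $(\mathbf{a}, n)$ produces a single full-measure set $S$ on which $\frac{1}{k}\sum_{i=0}^{k-1}\alpha_{C_k(x)}(T^i \phi) \to \int \phi\, \mathrm{d}\mu$ for every $\phi \in \mathcal{F}$.

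Finally, on $S$ I would extend to all of $C(X)$ by the linearity-and-density argument already used in Theorem \ref{Non-expansinve random cylinders}: given $f \in C(X)$ and $\epsilon > 0$, Lemma \ref{Cylinders dense} supplies $g \in \operatorname{span}\mathcal{F}$ with $\|f - g\|_\infty < \frac{\epsilon}{3}$, and then the bounds $\left| \int(f - g)\, \mathrm{d}\mu \right| \leq \|f - g\|_\infty$ and $\left| \frac{1}{k}\sum_{i=0}^{k-1}\alpha_{C_k(x)}(T^i(f - g)) \right| \leq \|f - g\|_\infty$ (both because $\alpha_{C_k(x)}$ and the Cesàro average are $L^\infty$-contractions) reduce convergence for $f$ to convergence for $g$, which holds on $S$. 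The main obstacle is conceptual and lives in the first two paragraphs: recognizing that one must abandon the two-sided metric for the one-sided pseudometric to obtain shrinking cylinders, and then paying for this by separately treating dependence on negative coordinates via the shift trick. Everything after that is routine bookkeeping already rehearsed in the preceding proofs.
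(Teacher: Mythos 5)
Your proposal is correct and follows essentially the same route as the paper's own proof: the same one-sided pseudometric $p(x,y) = 2^{-\min\{n \geq 0 : x_n \neq y_n\}}$ making $T$ $2$-Lipschitz with $p$-shrinking cylinders, reduction to Proposition \ref{Random Lipschitz Cylinders}, a countable intersection over words, the $O(|n|/k)$ boundary estimate to transfer convergence from $\chi_{[\mathbf{a}]}$ to $T^n\chi_{[\mathbf{a}]}$ (the paper does $n = \pm 1$ and inducts, which is the same argument), and the density extension via Lemma \ref{Cylinders dense}.
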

	
	\begin{proof}
		Our first step is to show that
		$$\frac{1}{k} \sum_{i = 0}^{k - 1} \alpha_{C_k(x)} \left( T^i \chi_{[a_0, a_1, \ldots, a_{\ell - 1}]} \right) \to \int \chi_{[a_0, a_1, \ldots, a_{\ell - 1}]} \mathrm{d} \mu$$
		for all finite strings $\mathbf{a} = (a_0, a_1, \ldots, a_{\ell - 1}) \in \mathcal{D}^\ell$. Let $p$ be the pseudometric on $X$ given by
		$p(x, y) = 2^{- \min \{ n \geq 0 : x_n \neq y_n \}} ,$
		where $\min (\emptyset) = + \infty$ and $2^{-\infty} = 0$.
		
		We claim that the function $\chi_{[\mathbf{a}]}$ is continuous with respect to the topology of $p$, and that $(X, \mathcal{B}, \mu, T)$ satisfies the hypotheses of Proposition \ref{Random Lipschitz Cylinders} for $L = 2$. A straightforward calculation shows that $T$ is $2$-Lipschitz and that $\operatorname{diam} \left([\mathbf{a}]\right) \leq 2 \cdot 2^{-\ell}$ for all $\ell \in \mathbb{N}, \mathbf{a} \in \mathcal{D}^\ell$. Therefore, if
		$$R_\mathbf{a} = \left\{ x \in X : \frac{1}{k} \sum_{i = 0}^{k - 1} \alpha_{C_k(x)} \left( T^i \chi_{[a_0, a_1, \ldots, a_{\ell - 1}]} \right) \to \int \chi_{[a_0, a_1, \ldots, a_{\ell - 1}]} \mathrm{d} \mu \right\},$$
		then $\mu(R_\mathbf{a}) = 1$ for all $\mathbf{a} \in \bigcup_{\ell = 1}^\infty \mathcal{D}^\ell$, and so $R = \bigcap_{\mathbf{a} \in \bigcup_{\ell = 1}^\infty \mathcal{D}^\ell} R_\mathbf{a}$ is of full measure. We now claim that if $\frac{1}{k} \sum_{i = 0}^{k - 1} \alpha_{C_k(x)} \left( T^i \chi_{[a_0, a_1, \ldots, a_{\ell - 1}]} \right) \to \int \chi_{[a_0, a_1, \ldots, a_{\ell - 1}]} \mathrm{d} \mu$, then $$\frac{1}{k} \sum_{i = 0}^{k - 1} \alpha_{C_k(x)} \left( T^i T^n \chi_{[a_0, a_1, \ldots, a_{\ell - 1}]} \right) \to \int \chi_{[a_0, a_1, \ldots, a_{\ell - 1}]} \mathrm{d} \mu $$
		for all $n \in \mathbb{Z}$.
		
		It will suffice to prove the result for $n = \pm 1$ and extend to all $n \in \mathbb{Z}$ by induction. To prove the claim for $n = 1$, we observe that
		\begin{align*}
			\left( \frac{1}{k} \sum_{i = 0}^{k - 1} \alpha_{C_k(x)} \left( T^i (T f) \right) \right) - \left( \frac{1}{k} \sum_{i = 0}^{k - 1} \alpha_{C_k(x)} \left( T^i f \right) \right)	& = \frac{1}{k} \alpha_{C_k(x)} \left( T^k f - f \right) \\
			\Rightarrow \left| \left( \frac{1}{k} \sum_{i = 0}^{k - 1} \alpha_{C_k(x)} \left( T^i (T f) \right) \right) - \left( \frac{1}{k} \sum_{i = 0}^{k - 1} \alpha_{C_k(x)} \left( T^i f \right) \right) \right|	& \leq \frac{2}{k} \| f \|_\infty \\
			& \to 0 .
		\end{align*}
		A similar calculation tells us that
		$$\left| \left( \frac{1}{k} \sum_{i = 0}^{k - 1} \alpha_{C_k(x)} \left( T^i \left( T^{-1} f \right) \right) \right) - \left( \frac{1}{k} \sum_{i = 0}^{k - 1} \alpha_{C_k(x)} \left( T^i f \right) \right) \right| \leq \frac{2}{k} \| f \|_\infty \to 0 ,$$
		verifying the claim for $n = - 1$.
		Thus if $\frac{1}{k} \sum_{i = 0}^{k - 1} \alpha_{C_k(x)} \left( T^i f \right) \to \int f \mathrm{d} \mu$, then a straightforward induction argument will show that
		$$\frac{1}{k} \sum_{i = 0}^{k - 1} \alpha_{C_k(x)} \left( T^i \left( T^n f \right) \right) \to \int f \mathrm{d} \mu = \int T^n f \mathrm{d} \mu $$
		for all $n \in \mathbb{Z}$.
		
		In particular, this means that if $x \in R$, then $\frac{1}{k} \sum_{i = 0}^{k - 1} \alpha_{C_k(x)} \left( T^i f \right) \to \int f \mathrm{d} \mu$ for all $f \in \mathcal{F}$. Since the span of $\mathcal{F}$ is dense in $C(X)$, this means that if $x \in R$, then
		$$\frac{1}{k} \sum_{i = 0}^{k - 1} \alpha_{C_k(x)} \left( T^i f \right) \to \int f \mathrm{d} \mu$$
		for all $f \in C(X)$.
	\end{proof}
	
	We turn now to apply Theorem \ref{Metric result for subshifts} to a slightly broader context. Let $(Y, \mathcal{A}, \nu, S)$ be an invertible ergodic system with finite entropy. Then the system admits a finite generator $\mathcal{E} = \{E_d\}_{d \in \mathcal{D}}$. For each $y \in Y, i \in \mathbb{Z}$, let $e_i(y) \in \mathcal{D}$ be the element of $\mathcal{D}$ such that $y \in S^{-i} E_{e_i(y)}$, or equivalently such that $S^i y \in E_{e_i(y)}$. Define the \emph{$k$-length cylinder} corresponding to $y$ by
	$$F_k(y) = \bigcap_{i = 0}^{k - 1} S^{-i} E_{e_i(y)} .$$
	
	We define a map $\phi : Y \to \mathcal{D}^\mathbb{Z}$ by
	$$\phi(y) = (e_i(y))_{i \in \mathbb{Z}} .$$
	We call this map $\phi$ the \emph{itinerary map} on $Y$ induced by $\mathcal{E}$. Let $T$ be the standard left shift on $\mathcal{D}^\mathbb{Z}$. The itinerary map commutes with the left shift in the sense that the following diagram commutes:
	$$
	\begin{tikzcd}
		Y \arrow[d, "\phi"] \arrow[r, "S"] & Y \arrow[d, "\phi"] \\
		\mathcal{D}^\mathbb{Z} \arrow[r, "T"] & \mathcal{D}^\mathbb{Z}
	\end{tikzcd}
	$$
	We can now state the following corollary.
	
	\begin{Cor}
		Let $(Y, \mathcal{A}, \nu, S)$ be an invertible ergodic system with finite entropy and finite generator $\mathcal{E} = \{E_d\}_{d \in \mathcal{D}}$. Let $\mathbb{A} \subseteq L^\infty(Y, \nu)$ be the subspace
		$$\mathbb{A} = \overline{\operatorname{span}} \left\{ S^n \chi_{\bigcap_{j = 0}^{\ell - 1} S^{-j} E_{d_j}} : n \in \mathbb{Z}, \ell \in \mathbb{N}, d_j \in \mathcal{D} \right\} .$$
		Then the set of $y \in Y$ such that
		$$\frac{1}{k} \sum_{i = 0}^{k - 1} \frac{1}{\mu(F_k(y))} \int_{F_k(y)} S^i g \mathrm{d} \nu \to \int g \mathrm{d} \nu$$
		for all $g \in \mathbb{A}$ is of full measure.
	\end{Cor}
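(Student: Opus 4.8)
The plan is to push the entire problem through the itinerary map $\phi$ onto a subshift and then invoke Theorem \ref{Metric result for subshifts}. First I would set $\mu = \phi_* \nu$, a Borel probability measure on $\mathcal{D}^{\mathbb{Z}}$. The commuting relation $\phi \circ S = T \circ \phi$ makes $\phi$ a factor map, so $\mu$ is $T$-invariant and (being the image of an ergodic system) ergodic; hence $X := \operatorname{supp}(\mu)$ is a closed, $T$-invariant subset of $\mathcal{D}^{\mathbb{Z}}$, i.e. a subshift to which Theorem \ref{Metric result for subshifts} applies. Write $\Phi : L^\infty(X, \mu) \to L^\infty(Y, \nu)$, $\Phi(h) = h \circ \phi$, for the induced pullback. (When $\mathcal{E}$ is a generator, $\phi$ is in fact an essential isomorphism, but only its factor structure will be used.)

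Next I would record the dictionary between the two systems, all of which follows from the pushforward relation $\mu = \phi_* \nu$ and the commuting diagram. Directly from the definitions, $F_k(y) = \phi^{-1}\bigl(C_k(\phi(y))\bigr)$, so $\nu(F_k(y)) = \mu\bigl(C_k(\phi(y))\bigr)$ and $\int_{F_k(y)} \Phi(h) \, \mathrm{d}\nu = \int_{C_k(\phi(y))} h \, \mathrm{d}\mu$. Iterating the diagram gives $\phi(S^n y) = T^n \phi(y)$, whence $S^n \Phi(h) = \Phi(T^n h)$; in particular the generators of $\mathbb{A}$ satisfy $S^n \chi_{\bigcap_{j=0}^{\ell-1} S^{-j} E_{d_j}} = \Phi\bigl(T^n \chi_{[d_0, \ldots, d_{\ell-1}]}\bigr)$, since $\chi_{\bigcap_j S^{-j} E_{d_j}} = \chi_{[d_0, \ldots, d_{\ell-1}]} \circ \phi$. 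Combining these, for every $f \in C(X)$ and every $i$ one gets $\int \Phi(f) \, \mathrm{d}\nu = \int f \, \mathrm{d}\mu$ together with the term-by-term coincidence
$$\frac{1}{\nu(F_k(y))} \int_{F_k(y)} S^i \Phi(f) \, \mathrm{d}\nu = \frac{1}{\mu(C_k(\phi(y)))} \int_{C_k(\phi(y))} T^i f \, \mathrm{d}\mu .$$

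With the dictionary in hand, Theorem \ref{Metric result for subshifts} supplies a $\mu$-full set $R \subseteq X$ on which $\frac{1}{k} \sum_{i=0}^{k-1} \alpha_{C_k(x)}(T^i f) \to \int f \, \mathrm{d}\mu$ for all $f \in C(X)$. Setting $R' = \phi^{-1}(R)$, which is $\nu$-full because $\phi_* \nu = \mu$, the displayed coincidence shows that for every $y \in R'$ the spatial-temporal differentiation of $\Phi(f)$ converges to $\int \Phi(f) \, \mathrm{d}\nu$ for all $f \in C(X)$. In particular this holds for each generator $S^n \chi_{\bigcap_j S^{-j} E_{d_j}} = \Phi(T^n \chi_{[\mathbf{d}]})$, since cylinders are clopen and hence continuous on the subshift.

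It remains to upgrade from the generators to all of $\mathbb{A}$. For fixed $y \in R'$ and each $k$, the assignment $g \mapsto \frac{1}{k} \sum_{i=0}^{k-1} \alpha_{F_k(y)}(S^i g)$ is a Cesàro average of Koopman operators composed with the state $\alpha_{F_k(y)}$, hence a linear functional of norm at most $1$ on $L^\infty(Y, \nu)$. Consequently the set $G_y$ of $g$ for which these averages converge to $\int g \, \mathrm{d}\nu$ is a closed subspace, by the same $\frac{\epsilon}{3}$ approximation used in the proof of Theorem \ref{Non-expansinve random cylinders}. Since $G_y$ contains every generator of $\mathbb{A}$ and $\mathbb{A}$ is by definition the closed span of those generators, we conclude $\mathbb{A} \subseteq G_y$ for all $y \in R'$, which is the claim. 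The main obstacle is not in the estimates but in the setup: one must verify carefully that $\phi$ is a measure-preserving factor and that cylinders, their measures, and the associated states correspond exactly under $\phi$, after which the convergence is inherited from the subshift case and extended by the routine density argument.
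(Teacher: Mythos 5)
Your proposal is correct and follows essentially the same route as the paper: push $\nu$ forward through the itinerary map, use the identity $F_k(y) = \phi^{-1}\bigl(C_k(\phi(y))\bigr)$ to transfer the states, invoke Theorem \ref{Metric result for subshifts} to get a full-measure set in the subshift, pull it back, and finish with the density argument over the closed span. The only (harmless) cosmetic difference is that you handle the shifted generators $S^n \chi_{\bigcap_j S^{-j} E_{d_j}}$ by observing they pull back from the continuous functions $T^n \chi_{[\mathbf{d}]}$ on the shift, whereas the paper re-runs the telescoping estimate from Theorem \ref{Metric result for subshifts}; both settle that step correctly.
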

	
	\begin{proof}
		Endow $\mathcal{D}^\mathbb{Z}$ with the pushforward measure $\mu(B) = \nu \left( \phi^{-1} B \right)$. Since $\phi^{-1}[d] = E_d \in \mathcal{A}$ for all $d \in \mathcal{D}$, we know that $\mu$ is Borel. We also observe that $F_k(y) = \phi^{-1} C_k(\phi(y))$. Consider $f = \chi_{\bigcap_{i = 0}^{k - 1} E_{d_i}} = \chi_{\phi^{-1} [d_0, d_1, \ldots, d_{k - 1}] }$. Let $B \subseteq \mathcal{D}^\mathbb{Z}$ be the set of all $x \in X$ such that
		$$\frac{1}{k} \sum_{i = 0}^{k - 1} \alpha_{C_k(x)} \left( T^i f \right) \to \int f \mathrm{d} \mu$$
		for all $f \in C(X)$, which we know by the previous theorem to be of full measure in $X$, and let $A = \phi^{-1} B$. Then if $y \in A$, and $d_0, d_1, \ldots, d_{\ell - 1} \in \mathcal{D}$, then
		\begin{align*}
			& \frac{1}{k} \sum_{i = 0}^{k - 1} \frac{1}{\nu(F_k(y))} \int_{F_k(y)} S^i \chi_{\bigcap_{j = 0}^{\ell - 1} S^{-j} E_{d_j}} \mathrm{d} \nu \\
			& = \frac{1}{k} \sum_{i = 0}^{k - 1} \frac{1}{\nu(F_k(y))} \nu \left( F_k(y) \cap S^{-i} \bigcap_{j = 0}^{\ell - 1} S^{-j} E_{d_j} \right) \\
			& = \frac{1}{k} \sum_{i = 0}^{k - 1} \frac{1}{\nu \left( \phi^{-1} C_k(\phi(y)) \right)} \nu \left(  \phi^{-1} \left( C_k(\phi(y)) \cap T^{-i} [d_0, d_1, \ldots, d_{\ell - 1}] \right) \right) \\
			& = \frac{1}{k} \sum_{i = 0}^{k - 1} \frac{1}{\mu (C_k(\phi(y))) } \mu \left( C_k(\phi(y)) \cap T^{-i} [d_0, d_1, \ldots, d_{\ell - 1}] \right) \\
			& = \frac{1}{k} \sum_{i = 0}^{k - 1} \frac{1}{\mu (C_k(\phi(y))) } \int_{C_k(\phi(y)} T^{i} \chi_{[d_0, d_1, \ldots, d_{\ell - 1}]} \mathrm{d} \mu \\
			& \to \int \chi_{[d_0, d_1, \ldots, d_{\ell - 1}]} \mathrm{d} \mu \\
			& = \mu([d_0, d_1, \ldots, d_{\ell - 1}]) \\
			& = \nu \left( \bigcap_{j = 0}^{\ell - 1} S^{-j} E_{d_j} \right) \\
			& = \int \chi_{\bigcap_{j = 0}^{\ell - 1} S^{-j} E_{d_j}} \mathrm{d} \nu ,
		\end{align*}
		since $\chi_{[d_0, d_1, \ldots, d_{\ell - 1}]} \in C \left( \mathcal{D}^\mathbb{Z} \right)$.
		By an argument similar to that employed in the proof of Theorem \ref{Metric result for subshifts}, we can extrapolate that if $y \in A$, then
		$$\frac{1}{k} \sum_{i = 0}^{k - 1} \frac{1}{\mu(F_k(y))} \int_{F_k(y)} S^i S^n \chi_{\bigcap_{j = 0}^{\ell - 1} S^{-j} E_{d_j}} \mathrm{d} \nu \to \int S^n \chi_{\bigcap_{j = 0}^{\ell - 1} S^{-j} E_{d_j}} \mathrm{d} \nu$$
		for $n \in \mathbb{Z}$. By density, it follows that if $g \in \mathbb{A}$, then \linebreak $\frac{1}{k} \sum_{i = 0}^{k - 1} \frac{1}{\mu(F_k(y))} \int_{F_k(y)} S^i g \mathrm{d} \nu \to \int g \mathrm{d} \nu$ for all $y \in A$, and $A$ is a set of full measure.
	\end{proof}
	
	\subsection{Pathological differentiation problems and relations to symbolic distributions}
	
	In Theorem \ref{Metric result for subshifts}, we demonstrated that
	$$\alpha_{C_k(x)} \left( \frac{1}{k} \sum_{i = 0}^{k - 1} T^i f \right) \stackrel{k \to \infty}{\to} \int f \mathrm{d} \mu \; (\forall f\in C(X)) $$
	for almost all $x \in X$. We take this opportunity to demonstrate that the "almost all" caveat is indispensable, as there can exist $x \in X$ for which $\alpha_{C_k(x)} \left( \frac{1}{k} \sum_{i = 0}^{k - 1} T^i f \right) \not \to \int f \mathrm{d} \mu$ for certain $f \in C(X)$. This is related to the shift not being uniquely ergodic, which we discussed in more detail in Section \ref{Topological STDs}. In fact, we even claim the sequence $\left( \alpha_{C_k(x)} \left( \frac{1}{k} \sum_{i = 0}^{k - 1} T^i f \right) \right)_{k = 1}^{\infty}$ can fail to be \emph{Cauchy} for certain pairs $(x, f) \in X \times C(X)$. 
	
	\begin{Thm}\label{Pathological Differentiation}
		Let $X = \mathcal{D}^\mathbb{Z}$ be a Bernoulli shift with symbol space \linebreak $\mathcal{D} = \{0, 1, \ldots, D - 1\}, D \geq 2$, a Borel probability measure $\mu$ such that $\mu([d]) \neq 0$ for all $d \in \mathcal{D}$. Let $f = \chi_{[0]}$, and left shift $T$. Then there exists an uncountable subset $S \subseteq X$ such that $x, y \in S \Rightarrow x_j = y_j \; \left( \forall j \leq 0 \right)$, and such that the sequence $\left( \alpha_{C_k(x)} \left( \frac{1}{k} \sum_{i = 0}^{k - 1} T^i f \right) \right)_{k = 1}^\infty$ is not Cauchy for all $x \in S$.
	\end{Thm}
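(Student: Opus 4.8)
The plan is to first reduce $\alpha_{C_k(x)}\left(\frac{1}{k}\sum_{i=0}^{k-1}T^i f\right)$ to something purely combinatorial. Since $f = \chi_{[0]}$, we have $(T^i f)(y) = \chi_{[0]}(T^i y)$, which is $1$ when $y_i = 0$ and $0$ otherwise; hence $\frac{1}{k}\sum_{i=0}^{k-1}T^i f$ is the function $y \mapsto \frac{1}{k}\#\{0 \le i \le k-1 : y_i = 0\}$, and this depends only on the coordinates $y_0, \ldots, y_{k-1}$. But these are exactly the coordinates frozen by the rank-$k$ cylinder $C_k(x) = [x_0, \ldots, x_{k-1}]$, so $\frac{1}{k}\sum_{i=0}^{k-1}T^i f$ is \emph{constant} on $C_k(x)$, and therefore
$$\alpha_{C_k(x)}\left(\frac{1}{k}\sum_{i=0}^{k-1}T^i f\right) = \frac{1}{k}\#\{0 \le i \le k-1 : x_i = 0\} =: a_k(x),$$
the empirical frequency of the symbol $0$ among $x_0, \ldots, x_{k-1}$. (Each $\mu(C_k(x))$ is positive for the points we construct, since $\mu$ is Bernoulli and we use only symbols of positive mass.) The theorem thus reduces to exhibiting uncountably many $x$, all agreeing on the coordinates $j \le 0$, for which $(a_k(x))_{k=1}^\infty$ is not Cauchy.

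Next I would carry out the construction. Fix the coordinates $x_j$ for $j \le 0$ arbitrarily (say all equal to $0$), and build the coordinates $j \ge 1$ in consecutive blocks $B_1, B_2, \ldots$ of rapidly increasing lengths $\ell_m$, chosen so that $L_{m-1}/L_m \to 0$, where $L_m = \ell_1 + \cdots + \ell_m$ (for instance $L_m = 2^{2^m}$). Inside $B_m$ I fill every coordinate with $0$ when $m$ is odd and with $1$ when $m$ is even, \emph{except} for one distinguished coordinate $p_m \in B_m$, whose value I leave free in $\{0,1\}$. The assignment $(s_m)_{m \ge 1} \in \{0,1\}^{\mathbb{N}} \mapsto x$ given by $x_{p_m} = s_m$ is injective, since distinct binary sequences produce points differing at some $p_m$; hence its image $S$ is uncountable, and every element of $S$ agrees on the coordinates $j \le 0$ by construction.

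Finally I would verify the oscillation by evaluating $a_k$ at $k = L_m + 1$. When $m$ is odd, block $B_m$ alone contributes $\ell_m - 1$ zeros while $L_m + 1 = (1 + o(1))\ell_m$, so $a_{L_m+1} \to 1$ along odd $m$; when $m$ is even, all zeros come from positions $\le L_{m-1}$ together with the at most $m$ free coordinates, a total of $O(L_{m-1} + m) = o(\ell_m)$, so $a_{L_m+1} \to 0$ along even $m$. Thus $\liminf_k a_k(x) = 0 < 1 = \limsup_k a_k(x)$ for \emph{every} $x \in S$, and the sequence fails to be Cauchy. The only point requiring care is that the single free coordinate per block (and the frozen coordinate $x_0$) must not disturb these two limits; this is precisely why the block lengths are taken to grow fast enough that the free coordinates occupy a set of density zero, making their cumulative contribution to $a_{L_m+1}$ negligible. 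This density-zero bookkeeping — guaranteeing oscillation simultaneously for all uncountably many choices of the free bits — is the only genuine obstacle, the rest being the combinatorial reduction above.
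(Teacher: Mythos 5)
Your proposal is correct and follows essentially the same route as the paper: the same reduction of $\alpha_{C_k(x)}\left(\frac{1}{k}\sum_{i=0}^{k-1}T^i f\right)$ to the empirical frequency of the symbol $0$ among $x_0,\ldots,x_{k-1}$, followed by a block construction with rapidly growing block lengths to force oscillation, with uncountability coming from a density-zero set of freely chosen coordinates. The paper packages the uncountability step slightly differently --- it first proves that non-Cauchyness is invariant under changing coordinates on a set of density zero, then takes the full equivalence class of a single explicit point (whose averages oscillate between $1/3$ and $2/3$ rather than $0$ and $1$) --- but this is the same underlying idea as your free coordinates $p_m$.
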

	
	\begin{proof}
		We first compute $\alpha_{[x_0, x_1, \ldots, x_{k - 1}]} \left(T^i f \right)$ for $0 \leq i \leq k - 1$ as follows. We see
		\begin{align*}
			\alpha_{[x_0, x_1, \ldots, x_{k - 1}]} \left(T^i f \right)	& = \frac{1}{\mu([x_0, x_1, \ldots, x_{k - 1}])} \int_{[x_0, x_1, \ldots, x_{k - 1}]} T^i \chi_{[0]} \mathrm{d} \mu \\
			& = \frac{1}{\mu([x_0, x_1, \ldots, x_{k - 1}])} \int_{\bigcap_{j = 0}^{k - 1} T^{-j} [x_j]} \chi_{T^{-i} [0]} \mathrm{d} \mu \\
			& = \frac{1}{\mu([x_0, x_1, \ldots, x_{k - 1}])} \mu \left( \left( \bigcap_{j = 0}^{k - 1} T^{-j} [x_j] \right) \cap T^{-i}[0] \right) \\
			& = \delta(x_i, 0) ,
		\end{align*}
		where $\delta(\cdot, \cdot)$ refers here to the Kronecker delta. Thus if $x = (x_j)_{j \in \mathbb{Z}} \in X$, then
		\begin{align*}
			\alpha_{C_k(x)} \left( \frac{1}{k} \sum_{i = 0}^{k - 1} T^i f \right)	& = \frac{\# \{ i \in [0 , k - 1] : x_i = 0 \}}{k}	& (\dagger) \end{align*}
		
		The identity $(\dagger)$ implies that if there exists $x \in X$ such that \linebreak $\left( \alpha_{C_k(x)} \left( \frac{1}{k} \sum_{i = 0}^{k - 1} T^i f \right) \right)_{k = 1}^\infty$ is not Cauchy, then we can then build our set $S$. For $x, y \in X$, write $x \sim y$ if $x_j = y_j$ for all $j \leq 0$, and the set $\{ j \in \mathbb{N} : x_j \neq y_j \}$ has density $0$. This is an equivalence relation. We claim that if $x \sim y$, then $\left| \alpha_{C_k(x)} \left( \frac{1}{k} \sum_{i = 0}^{k - 1} T^i f \right) - \alpha_{C_k(y)} \left( \frac{1}{k} \sum_{i = 0}^{k - 1} T^i f \right) \right| \stackrel{k \to \infty}{\to} 0$. By $(\dagger)$, we know that
		\begin{align*}
			\left| \alpha_{C_k(x)} \left( \frac{1}{k} \sum_{i = 0}^{k - 1} T^i f \right) - \alpha_{C_k(y)} \left( \frac{1}{k} \sum_{i = 0}^{k - 1} T^i f \right) \right|	& = \left|\frac{1}{k} \sum_{i = 0}^{k - 1} (\delta(x_i, 0) - \delta(y_i, 0)) \right| \\
			& \leq \frac{1}{k} \sum_{i = 0}^{k - 1} |(\delta(x_i, 0)-\delta(y_i, 0))| \\
			& \leq \frac{\# i \in [0, k - 1] : x_i \neq y_i}{k} \\
			& \stackrel{k \to \infty}{\to} 0 .
		\end{align*}
		Therefore, we can let $S$ be the equivalence class of $x$ under $\sim$. To see that this $S$ is uncountable, let $E \subseteq \mathbb{N}$ be an infinite subset of density $0$. Then $I$ has density $0$. For each $F \subseteq E$, let $x^F \in X$ be a sequence such that $x_j^F = x_j$ for $j \not \in F$ and $x_j^F \neq x_j$ for $j \in F$. Since $E$ has uncountably many subsets, and $x \sim x^F$ for all $F \subseteq E$, we have shown that the equivalence class of $x$ by $\sim$ is uncountable. So, assuming that $x \in X$ such that $\left( \alpha_{C_k(x)} \left( \frac{1}{k} \sum_{i = 0}^{k - 1} f \right) \right)_{k = 1}^\infty$ is not Cauchy, then we can let $S = \{ y \in X : x \sim y \}$.
		
		Our next order of business is to construct some such $x$. The identity $(\dagger)$ also helps us construct an $x \in X$ for which $\left( \alpha_{C_k(x)} \left( \frac{1}{k} \sum_{i = 0}^{k - 1} T^i f \right) \right)_{k = 1}^\infty$ is not Cauchy. Construct $x = (x_j)_{j \in \mathbb{Z}} \in X$ as follows. For brevity, let $c_n = \sum_{p = 1}^n 2^p$. Set
		$$x_j = \begin{cases}
			0	& j < 0 \\
			1	& j = 0 \\
			0	& 0 < j \leq 2 \\
			1	& 2 < j \leq 6 \\
			0	& 6 < j \leq 14 \\
			1	& 14 < j \leq 30 \\
			\vdots \\
			0	& c_{2n} < j \leq c_{2n + 1} \\
			1	& c_{2n + 1} < j \leq c_{2n + 2} \\
			0	& c_{2n + 2} < j \leq c_{2n + 3} \\
			\vdots
		\end{cases}$$
		In plain language, this sequence begins with $0$ for $j < 0$, a $1$ at $j = 0$, then $2^1$ terms of $0$, then $2^2$ terms of $1$, then $2^3$ terms of $0$, then $2^4$ terms of $1$, and so on. We claim that $\liminf_{k \to \infty} \alpha_{C_k(x)} \left( \frac{1}{k} \sum_{i = 0}^{k - 1} T^i f \right) \neq \limsup \alpha_{C_k(x)} \left( \frac{1}{k} \sum_{i = 0}^{k - 1} T^i f \right)$. Sampling along the subsequence $k_n = c_{2n} + 1$, we get
		\begin{align*}
			\alpha_{C_{c_{2n} + 1}(x)} \left( \frac{1}{c_{2n} + 1} \sum_{i = 0}^{c_{2n}} T^i f \right)	& = \frac{2 + 8 + 32 + \cdots + 2^{2n - 1}}{1 + 2 + 4 + 6 + \cdots + 2^{2n}}	& = \frac{\frac{1}{2}\sum_{p = 1}^n 4^p}{1 + \sum_{q = 1}^{2n} 2^q} \\
			& = \frac{1}{3} \cdot \frac{4^n - 1}{4^n - \frac{1}{2}} & \stackrel{n \to \infty}{\to} \frac{1}{3} ,
		\end{align*}
		where the limit is taken using L'Hospital's Rule. On the other hand, looking at the subsequence $k_n = c_{2n - 1} + 1$, we get
		\begin{align*}
			\alpha_{C_{c_{2n - 1} + 1}(x)} \left( \frac{1}{c_{2n - 1} + 1} \sum_{i = 0}^{c_{2n - 1}} T^i f \right)	& = \frac{2 + 8 + 32 + \cdots + 2^{2n - 1}}{1 + 2 + 4 + 6 + \cdots + 2^{2n - 1}}	& = \frac{\frac{1}{2}\sum_{p = 1}^n 4^p}{1 + \sum_{q = 1}^{2n - 1} 2^q} \\
			& = \frac{1}{3} \cdot \frac{4^{n} - 1}{\frac{1}{2} 4^{n} - \frac{1}{2}}	& \stackrel{n \to \infty}{\to} \frac{2}{3} .
		\end{align*}
		Thus we can say
		$$
		\liminf_{k \to \infty} \alpha_{C_k(x)} \left( \frac{1}{k} \sum_{i = 0}^{k - 1} T^i f \right) \leq \frac{1}{3} < \frac{2}{3} \leq \limsup_{k \to \infty} \alpha_{C_k(x)} \left( \frac{1}{k} \sum_{i = 0}^{k - 1} T^i f \right) .
		$$
		Therefore the sequence $\left( \alpha_{C_k(x)} \left( \frac{1}{k} \sum_{i = 0}^{k - 1} T^i f \right) \right)_{k = 1}^\infty$ is divergent, and thus not Cauchy.
	\end{proof}

	\begin{Rmk}
	Theorem \ref{Pathological Differentiation} is not encompassed by Theorem \ref{Pathological connected differentiation}, since a subshift is a priori totally disconnected.
	\end{Rmk}
	
	This calculation adequately sets up the following result.
	
	\begin{Thm}\label{Normal shift points}
		Let $(X, \mathcal{B}, \mu, T)$ be an ergodic subshift, with $X \subseteq \mathcal{D}^\mathbb{Z}$, and let $x \in X$. Then the following statements about $x \in X$ are equivalent.
		\begin{enumerate}
			\item For all $f \in C(X)$, the limit $$\lim_{k \to \infty} \alpha_{C_k(x)}\left( \frac{1}{k} \sum_{i = 0}^{k - 1} T^i f \right)$$
			exists and is equal to $\int f \mathrm{d} \mu$.
			\item For all words $(a_0, a_1, \ldots, a_{\ell -  1}) \in \bigcup_{\ell = 1}^\infty \mathcal{D}^\ell$, the limit $$\lim_{k \to \infty} \alpha_{C_k(x)}\left( \frac{1}{k} \sum_{i = 0}^{k - 1} T^i \chi_{[a_0, a_1, \ldots, a_{\ell -  1}]} \right)$$
			exists and is equal to $\mu([a_0, a_1, \ldots, a_{\ell -  1}])$.
			\item For all words $(a_0, a_1, \ldots, a_{\ell -  1}) \in \bigcup_{\ell = 1}^\infty \mathcal{D}^\ell$, the limit
			$$\lim_{k \to \infty} \frac{\# \{ i \in [0, k - \ell] : x_i = a_0, x_{i + 1} = a_1, \ldots, x_{i + \ell - 1} = a_{\ell - 1} \}}{k}$$
			exists and is equal to $\mu([a_0, a_1, \ldots, a_{\ell - 1}])$.
			\item For all words $(a_0, a_1, \ldots, a_{\ell -  1}) \in \bigcup_{\ell = 1}^\infty \mathcal{D}^\ell$, the limit
			$$\lim_{k \to \infty} \frac{\# \{ i \in [0, k - 1] : x_i = a_0, x_{i + 1} = a_1, \ldots, x_{i + \ell - 1} = a_{\ell - 1} \}}{k}$$
			exists and is equal to $\mu([a_0, a_1, \ldots, a_{\ell - 1}])$. 
		\end{enumerate}
	\end{Thm}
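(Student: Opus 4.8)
The plan is to establish the cycle of implications (1)$\Rightarrow$(2), (2)$\Leftrightarrow$(3), (3)$\Leftrightarrow$(4), and (2)$\Rightarrow$(1), which together give the full equivalence. The implication (1)$\Rightarrow$(2) is immediate: since $X$ is a subshift, each cylinder $[a_0, \ldots, a_{\ell-1}]$ is clopen, so $\chi_{[a_0, \ldots, a_{\ell-1}]} \in C(X)$ and $\int \chi_{[a_0,\ldots,a_{\ell-1}]} \, \mathrm{d}\mu = \mu([a_0, \ldots, a_{\ell-1}])$; applying (1) to this particular continuous function yields (2).

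The heart of the argument is the computation linking (2) and (3), which generalizes the identity $(\dagger)$ from Theorem \ref{Pathological Differentiation}. Fix a word $\mathbf{a} = (a_0, \ldots, a_{\ell-1})$. The key observation is that $T^i \chi_{[\mathbf{a}]} = \chi_{\{y \,:\, y_i = a_0, \ldots, y_{i+\ell-1} = a_{\ell-1}\}}$, so for every index $i \leq k - \ell$ the defining coordinates $i, \ldots, i+\ell-1$ all lie in the range $[0, k-1]$ on which $C_k(x)$ is constant; hence this set either contains $C_k(x)$ entirely or is disjoint from it, giving $\alpha_{C_k(x)}(T^i \chi_{[\mathbf{a}]}) = \mathbb{1}[x_i = a_0, \ldots, x_{i+\ell-1} = a_{\ell-1}]$. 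For the remaining indices $k-\ell+1 \leq i \leq k-1$ — of which there are at most $\ell - 1$ — I use only the crude bound $|\alpha_{C_k(x)}(T^i \chi_{[\mathbf{a}]})| \leq 1$. Summing over $i$ and dividing by $k$ gives
$$\alpha_{C_k(x)}\left(\frac{1}{k}\sum_{i=0}^{k-1} T^i \chi_{[\mathbf{a}]}\right) = \frac{\#\{i \in [0, k-\ell] : x_i = a_0, \ldots, x_{i+\ell-1} = a_{\ell-1}\}}{k} + O\!\left(\frac{\ell-1}{k}\right),$$
and since the error term vanishes as $k \to \infty$, statement (2) holds for $\mathbf{a}$ precisely when statement (3) does. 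The equivalence (3)$\Leftrightarrow$(4) is even more direct: the two numerators differ by at most $\ell - 1$ (the count over $i \in [k-\ell+1, k-1]$), so the averages differ by at most $(\ell-1)/k \to 0$ and therefore share the same limiting behavior.

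The remaining implication (2)$\Rightarrow$(1) is where the density machinery enters, and I expect it to be the main obstacle, though it reduces to the techniques already developed for Theorem \ref{Metric result for subshifts}. The subtlety is that statement (2) supplies convergence only for the forward cylinder functions $\chi_{[\mathbf{a}]}$, whereas the generating family $\mathcal{F}$ of Lemma \ref{Cylinders dense} consists of all shifts $T^n \chi_{[\mathbf{a}]}$, $n \in \mathbb{Z}$. To close this gap I would invoke the shift-invariance estimate from the proof of Theorem \ref{Metric result for subshifts}: the difference between the average built from $T^n f$ and that built from $f$ telescopes, with absolute value at most $\frac{2}{k}\|f\|_\infty \to 0$, and since $\int T^n f \, \mathrm{d}\mu = \int f \, \mathrm{d}\mu$, convergence for $\chi_{[\mathbf{a}]}$ propagates to convergence for every $T^n \chi_{[\mathbf{a}]}$. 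By linearity this gives convergence on all of $\operatorname{span} \mathcal{F}$. Finally, since each functional $f \mapsto \alpha_{C_k(x)}\bigl(\frac{1}{k}\sum_{i=0}^{k-1} T^i f\bigr)$ has operator norm at most $1$ (the shift is an $L^\infty$-isometry and $\alpha_{C_k(x)}$ is a state), the convergence extends from the uniformly dense subspace $\operatorname{span}\mathcal{F}$ to all of $C(X)$ by a standard $3\epsilon$-approximation, yielding (1) and completing the cycle.
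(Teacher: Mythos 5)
Your proof is correct and follows essentially the same route as the paper: the exact evaluation of $\alpha_{C_k(x)}\left(T^i \chi_{[\mathbf{a}]}\right)$ as $0$ or $1$ for $i \leq k-\ell$ together with the $(\ell-1)/k$ tail bound gives (2)$\iff$(3), the same $(\ell-1)/k$ comparison gives (3)$\iff$(4), and Lemma \ref{Cylinders dense} combined with the telescoping and uniform-boundedness arguments gives (1)$\iff$(2). If anything, your treatment of (2)$\Rightarrow$(1) is more explicit than the paper's one-line citation of Lemma \ref{Cylinders dense}, since you correctly identify and fill the gap that the lemma's generating family consists of all shifts $T^n\chi_{[\mathbf{a}]}$, which requires the shift-propagation estimate from the proof of Theorem \ref{Metric result for subshifts}.
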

	
	\begin{proof}
		Lemma \ref{Cylinders dense} tells us that (1)$\iff$(2). That (3)$\iff$(4) comes from the observation that the absolute difference between the two sequences is at most $\frac{\ell - 1}{k}$. To establish (2)$\iff$(3), we compute $\alpha_{C_k(x)} \left( T^i \chi_{[a_0, a_1, \ldots, a_{\ell - 1}]} \right)$ for $i \in [0, k - \ell]$ as follows.
		\begin{align*}
			\alpha_{C_k(x)} \left( T^i \chi_{[a_0, a_1, \ldots, a_{\ell - 1}]} \right)	& = \frac{1}{\mu([x_0, x_1, \ldots, x_{k - 1}])} \int_{[x_0, x_1, \ldots, x_{k - 1}]} \chi_{T^{-i} [a_0, a_1, \ldots, a_{\ell - 1}]} \mathrm{d} \mu \\
			& = \begin{cases}
				1	& a_i = x_0, a_{i + 1} = x_1, \ldots, a_{i + \ell - 1} = x_{\ell - 1} , \\
				0	& \textrm{otherwise}
			\end{cases} .
		\end{align*}
		Therefore
		\begin{align*}
			& \frac{1}{k} \sum_{i = 0}^{k - \ell} \alpha_{C_k(x)} \left( T^i \chi_{[a_0, a_1, \ldots, a_{\ell - 1}]} \right) \\
			& =  \frac{\# \{ i \in [0, k - \ell] : x_i = a_0, x_{i + 1} = a_1, \ldots, x_{i + \ell - 1} = a_{\ell - 1} \}}{k} .
		\end{align*}
		Finally, we observe that
		\begin{align*}
			\left| \alpha_{C_k(x)} \left( \frac{1}{k} \sum_{i = 0}^{k - 1} T^i f \right) - \alpha_{C_k(x)} \left( \frac{1}{k} \sum_{i = 0}^{k - \ell} T^i f \right) \right|	& = \left| \alpha_{C_k(x)} \left( \frac{1}{k} \sum_{i = k - \ell + 1}^{k - 1} T^i f \right) \right|	\\
			& \leq \frac{\ell - 1}{k} \| f \|_\infty .
		\end{align*}
		Therefore the end behaviors of $\left( \alpha_{C_k(x)}\left( \frac{1}{k} \sum_{i = 0}^{k - 1} T^i \chi_{[a_0, a_1, \ldots, a_{\ell -  1}]} \right) \right)_{k = 1}^\infty$ and \linebreak $\left( \alpha_{C_k(x)}\left( \frac{1}{k} \sum_{i = 0}^{k - \ell} T^i \chi_{[a_0, a_1, \ldots, a_{\ell -  1}]} \right) \right)_{k = 1}^\infty$ are identical, i.e. one converges iff the other converges, and if they converge, then they converge to the same value. But then, as has already been established, we know that  
		\begin{align*}
		\left( \alpha_{C_k(x)}\left( \frac{1}{k} \sum_{i = 0}^{k - \ell} T^i \chi_{[a_0, a_1, \ldots, a_{\ell -  1}]} \right) \right)_{k = 1}^\infty \\
		= \left( \frac{\# \{ i \in [0, k - 1] : x_i = a_0, x_{i + 1} = a_1, \ldots, x_{i + \ell - 1} = a_{\ell - 1} \}}{k} \right)_{k = 1}^\infty ,
		\end{align*}
		demonstrating that (2)$\iff$(3).
	\end{proof}
	
	Theorem \ref{Normal shift points} gives us an alternate proof of Theorem \ref{Metric result for subshifts}. Applying the Birkhoff Ergodic Theorem to the functions $\chi_{[\mathbf{a}]}$ tells us that almost all $x \in X$ satisfy $\frac{1}{k} \sum_{i = 0}^{k - 1} T^i \chi_{[\mathbf{a}]}(x) \stackrel{k \to \infty}{\to } \mu([\mathbf{a}])$ for all strings $\mathbf{a} \in \bigcup_{\ell = 1}^\infty \mathcal{D}^\ell$. But this is exactly condition (4) from Theorem \ref{Normal shift points}. Moreover, this result gives us a more concrete characterization of the "set of full measure" that Theorem \ref{Metric result for subshifts} alludes to.
	
	Before concluding, we demonstrate that Proposition \ref{Random Lipschitz Cylinders} does not hinge on the cylinder structure of $X$.
	
	\begin{Thm}
		Let $(X, \rho)$ be a compact metric space, and let $T : X \to X$ be an $L$-Lipschitz homeomorphism on $X$ with respect to $\rho$, where $L > 1$. Suppose $\mu$ is a regular Borel probability measure on $X$ such that $T$ is ergodic with respect to $\mu$. Let $(r_k)_{k = 1}^\infty$ be a sequence of positive numbers $r_k > 0$ such that there exists a constant $\gamma \in \mathbb{R}$ such that $r_k \leq \gamma \cdot L^{-k}$ for all $k \in \mathbb{N}$. Fix $f \in C(X)$. Let $B_k(x) = \{ y \in X : \rho(x, y) < r_k \}$. Then the set of $x \in X$ such that
		$$\frac{1}{k} \sum_{i = 0}^{k - 1} \alpha_{B_k(x)} \left( T^i f \right) \stackrel{k \to \infty}{\to } \int f \mathrm{d} \mu$$
		for all $f \in C(X)$ is of full measure.
	\end{Thm}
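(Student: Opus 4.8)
The plan is to follow the proof of Proposition \ref{Random Lipschitz Cylinders} almost verbatim, with the diameter bound $\operatorname{diam}(C_k(x)) \leq \gamma_x \cdot L^{-k}$ replaced by the observation that $\operatorname{diam}(B_k(x)) \leq 2 r_k \leq 2\gamma \cdot L^{-k}$, and with the positivity of $\mu(B_k(x))$ secured by restricting attention to $x \in \operatorname{supp}(\mu)$, exactly as in the proof of Theorem \ref{Non-expansive random balls}. First I would fix a single $f \in C(X)$ and show that the set of $x$ for which the average converges to $\int f \, \mathrm{d}\mu$ has full measure; then I would extend to all $f \in C(X)$ simultaneously by a density argument identical to the one in Theorem \ref{Non-expansinve random cylinders}.

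For the fixed-$f$ step, I would decompose, for each $x \in \operatorname{supp}(\mu)$ and each $k$,
$$\left| \int f \, \mathrm{d}\mu - \frac{1}{k} \sum_{i=0}^{k-1} \alpha_{B_k(x)}\left(T^i f\right) \right| \leq \left| \int f \, \mathrm{d}\mu - \frac{1}{k} \sum_{i=0}^{k-1} T^i f(x) \right| + \frac{1}{k} \sum_{i=0}^{k-1} \left| T^i f(x) - \alpha_{B_k(x)}\left(T^i f\right) \right|.$$
The first term tends to $0$ for a.e.\ $x$ by the Birkhoff Ergodic Theorem. For the second term I would use uniform continuity of $f$: given $\epsilon > 0$, pick $\delta$ with $\rho(y,z) < \delta \Rightarrow |f(y) - f(z)| < \tfrac{\epsilon}{4}$, then choose $\kappa$ with $\gamma \cdot L^{-\kappa} < \delta$. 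The key Lipschitz estimate is that for $y \in B_k(x)$ one has $\rho(T^i x, T^i y) \leq L^i \rho(x,y) < L^i r_k \leq \gamma \cdot L^{i-k}$, so whenever $i \leq k - \kappa$ the image $T^i B_k(x)$ has diameter below $\delta$ and hence $|T^i f(x) - \alpha_{B_k(x)}(T^i f)| < \tfrac{\epsilon}{4}$. Splitting the sum over $i \in [0, k-\kappa]$ (bounded by $\tfrac{\epsilon}{4}$) and $i \in [k-\kappa+1, k-1]$ (bounded crudely by $\tfrac{2\kappa}{k}\|f\|_\infty$), and taking $k$ large, bounds the second term by $\tfrac{\epsilon}{2}$. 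Since the set of $x \in \operatorname{supp}(\mu)$ satisfying the Birkhoff conclusion is of full measure, the fixed-$f$ claim follows.

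To pass to all $f \in C(X)$, I would take a countable set $\{f_n\}_{n \in \mathbb{N}}$ with $\overline{\operatorname{span}}\{f_n\} = C(X)$, apply the fixed-$f$ result to each $f_n$, and intersect the resulting full-measure sets. For $x$ in this intersection and arbitrary $f$, approximating $f$ uniformly by a finite combination of the $f_n$ and exploiting that each $\alpha_{B_k(x)}$ is a norm-one functional on $L^\infty(X,\mu)$ transfers convergence to $f$, just as in Theorem \ref{Non-expansinve random cylinders}. The only genuinely new wrinkle compared with the cylinder case is ensuring $\mu(B_k(x)) > 0$ so that $\alpha_{B_k(x)}$ is defined; this is the step I expect to require the most care, and it is resolved precisely by working on $\operatorname{supp}(\mu)$, since every open ball centered at a support point has positive measure, and $\mu(\operatorname{supp}(\mu)) = 1$.
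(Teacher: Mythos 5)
Your proposal is correct and follows essentially the same route as the paper's own proof: fix $f$, split into the Birkhoff term and the term $\frac{1}{k}\sum_{i}\left| T^i f(x) - \alpha_{B_k(x)}\left(T^i f\right)\right|$, control the latter by the Lipschitz estimate $\rho\left(T^i x, T^i y\right) \leq \gamma L^{i-k}$ with the split at $i = k - \kappa$, and then pass to all of $C(X)$ by separability. If anything, you are slightly more careful than the paper, which leaves the a.e.\ positivity of $\mu(B_k(x))$ implicit rather than invoking $\operatorname{supp}(\mu)$ explicitly as you do.
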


	\begin{proof}
	Since $C(X)$ is separable, it will suffice to show that given some fixed $f \in C(X)$, we have
	$$\frac{1}{k} \sum_{i = 0}^{k - 1} \alpha_{B_k(x)} \left( T^i f \right) \stackrel{k \to \infty}{\to } \int f \mathrm{d} \mu$$
	for almost all $x \in X$. Our method of proof will closely resemble our proof of Proposition \ref{Random Lipschitz Cylinders}.
	
	Our goal is to show that for every $\epsilon > 0$ exists some $K \in \mathbb{N}$ such that if $k \geq K(\epsilon)$, we have
		\begin{align*}
			\left| \int f \mathrm{d} \mu - \frac{1}{k} \sum_{i = 0}^{k - 1} \alpha_{B_k(x)} \left( T^i f \right) \right|	\\
			\leq \left| \int f \mathrm{d} \mu - \frac{1}{k} \sum_{i = 0}^{k - 1} \left( T^i f \right)(x) \right| + \left| \frac{1}{k} \sum_{i = 0}^{k - 1} \left( \left( T^i f \right)(x) - \alpha_{B_k(x)} \left( T^i f \right) \right) \right| \\
			\leq \left| \int f \mathrm{d} \mu - \frac{1}{k} \sum_{i = 0}^{k - 1} \left( T^i f \right)(x) \right| + \frac{1}{k} \sum_{i = 0}^{k - 1} \left| \left( T^i f \right)(x) - \alpha_{B_k(x)} \left( T^i f \right) \right| \\
			\leq \epsilon .
		\end{align*}
		We will accomplish this by bounding the terms
		$$\left| \int f \mathrm{d} \mu - \frac{1}{k} \sum_{i = 0}^{k - 1} \left( T^i f \right)(x) \right| , \; \frac{1}{k} \sum_{i = 0}^{k - 1} \left| \left( T^i f \right)(x) - \alpha_{B_k(x)} \left( T^i f \right) \right| $$
		by $\epsilon$.
		
		We will start with bounding the latter term. We claim that if $x \in X$ such that $\mu(B_k(x)) > 0, \operatorname{diam}(B_k(x)) \leq \gamma_x \cdot L^{-k}$ for all $k \in \mathbb{N}$, then for every $\epsilon > 0$, there exists $K_1 \in \mathbb{N}$ such that
		$$k \geq K_1 \Rightarrow \frac{1}{k} \sum_{i = 0}^{k - 1} \left| \left( T^i f \right) (x) - \alpha_{B_k(x)} \left( T^i f \right) \right| < \frac{\epsilon}{2} .$$
		To prove this, choose $\delta > 0$ such that $p(y, z) < \delta \Rightarrow |f(y) - f(z)| < \frac{\epsilon}{4}$. Let $\kappa \in \mathbb{N}$ such that $\gamma_x \cdot L^{-\kappa} < \delta$. Then if $k > \kappa$, then
		\begin{align*}
			\frac{1}{k} \sum_{i = 0}^{k - 1} \left| \left( T^i f \right)(x) - \alpha_{B_k(x)} \left( T^i f \right) \right|	& = \frac{1}{k} \left[ \sum_{i = 0}^{k - \kappa} \left| \left( T^i f \right)(x) - \alpha_{B_k(x)} \left( T^i f \right) \right| \right] \\
			& + \frac{1}{k} \left[ \sum_{k - \kappa + 1}^{k - 1} \left| \left( T^i f \right)(x) - \alpha_{B_k(x)} \left( T^i f \right) \right| \right] .
		\end{align*}
		We will estimate these two terms separately, bounding each by $\frac{\epsilon}{4}$. Beginning with the former, we observe that if $x, y \in B_k(x)$, then
		$$p \left( T^{i} x, T^i y \right) \leq L^{i} p(x, y) \leq L^i \cdot \gamma_x \cdot L^{-k} = \gamma_{x} \cdot L^{i - k} .$$
		In particular, this means that if $i - k \leq - \kappa$, then $\left| \left( T^i f \right)(x) - f(z) \right| < \frac{\epsilon}{4}$ for all $z = T^i y \in T^i C_{k}(x)$, so
		\begin{align*}
			& \frac{1}{k} \left[ \sum_{i = 0}^{k - \kappa} \left| \left( T^i f \right)(x) - \alpha_{B_k(x)} \left( T^i f \right) \right| \right] \\
			& = \frac{1}{k} \left[ \sum_{i = 0}^{k - \kappa} \left| \frac{1}{\mu(B_k(x))} \int_{B_k(x)} \left( \left( T^i f \right)(x) \right) - T^i f \mathrm{d} \mu \right| \right] \\
			& = \frac{1}{k} \left[ \sum_{i = 0}^{k - \kappa} \frac{1}{\mu\left(T^i B_k(x)\right)} \int_{T^i B_k(x)} \left| \left( T^i f \right) (x) - f \right| \mathrm{d} \mu \right] \\
			& \leq \frac{1}{k} \left[ \sum_{i = 0}^{k - \kappa} \frac{1}{\mu\left(T^i B_k(x)\right)} \int_{T^i B_k(x)} \frac{\epsilon}{4} \mathrm{d} \mu \right] \\
			& = \frac{k - \kappa + 1}{k} \frac{\epsilon}{4} \\
			& \leq \frac{\epsilon}{4} .
		\end{align*}
		On the other hand, we can estimate
		$$\frac{1}{k} \left[ \sum_{k - \kappa + 1}^{k - 1} \left| \left( T^i f \right)(x) - \alpha_{B_k(x)} \left( T^i f \right) \right| \right]  \leq \frac{2 \kappa}{k} \| f \| .$$
		Choose $K_1 > \kappa$ such that $\frac{2 \kappa \left\| f \right\|_\infty}{K_1} < \frac{\epsilon}{4}$. Then if $k \geq K_1$, we have
		\begin{align*}
			\frac{1}{k} \sum_{i = 0}^{k - 1} \left| \left( T^i f \right)(x) - \alpha_{B_k(x)} \left( T^i f \right) \right|	& = \frac{1}{k} \left[ \sum_{i = 0}^{k - \kappa} \left| \left( T^i f \right)(x) - \alpha_{B_k(x)} \left( T^i f \right) \right| \right] \\
			& + \frac{1}{k} \left[ \sum_{k - \kappa + 1}^{k - 1} \left| \left( T^i f \right)(x) - \alpha_{B_k(x)} \left( T^i f \right) \right| \right] \\
			& \leq \frac{\epsilon}{4} + \frac{\epsilon}{4} \\
			& = \frac{\epsilon}{2} .
		\end{align*}
		
		Now suppose further that $x \in X$ is such that $\frac{1}{k} \sum_{i = 0}^{k - 1} \left( T^i f \right)(x) \stackrel{k \to \infty}{\to} \int f \mathrm{d} \mu$. Choose $K_2 \in \mathbb{N}$ such that $k \geq K_2 \Rightarrow \left| \int f \mathrm{d} \mu - \frac{1}{k} \sum_{i = 0}^{k - 1} \left( T^i f \right)(x) \right| < \frac{\epsilon}{2}$. Then if $k \geq \max \{ K_1, K_2 \}$, then we have
		\begin{align*}
			\frac{1}{k} \sum_{i = 0}^{k - 1} \left| \left( T^i f \right)(x) - \alpha_{B_k(x)} \left( T^i f \right) \right|	& = \frac{1}{k} \left[ \sum_{i = 0}^{k - \kappa} \left| \left( T^i f \right)(x) - \alpha_{B_k(x)} \left( T^i f \right) \right| \right] \\
			& + \frac{1}{k} \left[ \sum_{k - \kappa + 1}^{k - 1} \left| \left( T^i f \right)(x) - \alpha_{B_k(x)} \left( T^i f \right) \right| \right] \\
			& \leq \frac{\epsilon}{2} + \frac{\epsilon}{2} \\
			& = \epsilon .
		\end{align*}
	\end{proof}
	
		Before looking at a more general family of differentiation problems, we want to take a moment to observe that if $(X, \mathcal{B}, T, \mu)$ is an ergodic system, then if the (measure-theoretic) entropy $h(T, \mu)$ of the system is positive, then we automatically have that $\mu(C_k(x)) \stackrel{k \to \infty}{\to} 0$: by the Shannon-McMillan-Breiman Theorem \cite[Theorem 6.2.1]{D&K}, it follows that for $\mu$-almost every $x \in X$ there exists $K = K_x \in \mathbb{N}$ such that
		$$k \geq K \Rightarrow - \frac{1}{k} \log \mu (C_k(x)) \geq \frac{h(T, \mu)}{2} .$$
		Then if $k \geq K$, we have
		\begin{align*}
			- \frac{1}{k} \log \mu(C_k(x))	& \geq \frac{h(T, \mu)}{2} \\
			\Rightarrow \log \mu(C_k(x))	& \leq - \frac{h(T, \mu)}{2} k	& < 0 \\
			\Rightarrow \mu(C_k(x))	& \leq \left( e^{- \frac{h(T, \mu)}{2}} \right)^k	& \stackrel{k \to \infty}{\to} 0 .
		\end{align*}
		On the other hand, whether $\mu(B_k(x)) \stackrel{k \to \infty}{\to} 0$ depends on where $(X, \mathcal{B}, \mu)$ contains atoms. If $\mu(\{x\}) = 0$ for all $x \in X$, then $\mu(B_k(x)) \stackrel{k \to \infty}{\to} 0$.

	\section{Random cylinders in a Bernoulli shift - a probabilistic approach}\label{Bernoulli shifts and probability}
	
	In this section, we consider problems similar to those addressed in Sections \ref{Non-expansive} and \ref{Lipschitz}, where we take some $(X, \mathcal{B}, \mu, T)$ with specified properties (in this case, we assume the system is Bernoulli), and seek to establish conditions under which for a randomly chosen sequence $(F_k)_{k = 1}^\infty$ of sets of positive measure, the sequence $\left( \frac{1}{\mu(F_k)} \int_{F_k} \left( \frac{1}{k} \sum_{i = 0}^{k - 1} T^i f \right) \mathrm{d} \mu \right)_{k = 1}^\infty$ converges almost surely to $\int f \mathrm{d} \mu$ for all $f \in C(X)$.
	
	We provide now an alternate proof of a special case of Theorem \ref{Metric result for subshifts}. Though the result proved is lesser in scope, we include it for the reason that the proof provided here has a decidedly more probabilistic flavor than the proof provided of Theorem \ref{Metric result for subshifts} in Section \ref{Lipschitz}. This method of proof also proves slightly more versatile, as it allows us to consider randomly chosen sequences of cylinders which are not necessarily nested.
	
	In this section, $X = \mathcal{D}^\mathbb{Z}$ is a Bernoulli shift on a finite alphabet $\mathcal{D}$ with probability vector $\mathbf{p} = (p(d))_{d \in \mathcal{D}}$, and $\mu$ is the Borel probability measure on $X$ induced by $\mathbf{p}$. We begin by proving a lemma to which we assign a whimsical title.
	
	\begin{Lem}[The Even Stronger Law of Large Numbers]
		Let $(Y, \mathcal{A}, \nu)$ be a probability space, and let $(k_n)_{n = 1}^\infty$ be a sequence in $\mathbb{N}$ such that $\sum_{n = 1}^\infty k_n^{-2} < \infty$. Let $(\zeta_{i, n})_{0 \leq i \leq k_n - 1, n \in \mathbb{N}}$ be a family of $L^\infty$ real random variables satisfying the following conditions.
		\begin{enumerate}
			\item There exists $C \in [1, \infty)$ such that $\| \zeta_{i, n} \|_\infty \leq C$ for all $0 \leq i \leq k_n - 1, n \in \mathbb{N}$.
			\item $\int \zeta_{i, n} \mathrm{d} \nu = m$ for all $0 \leq i \leq k_n - 1 , n \in \mathbb{N}$, where $m$ is a constant.
			\item For each $n \in \mathbb{N}$, the subfamily $\{ \zeta_{i, n} \}_{i = 0}^{k_n - 1}$ is mutually independent.
		\end{enumerate}
		Then
		$$\frac{1}{k_n} \sum_{i = 0}^{k_n - 1} \zeta_{i, n} \stackrel{n \to \infty}{\to} m$$
		almost surely.
	\end{Lem}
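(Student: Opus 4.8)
The plan is to establish almost sure convergence by means of a fourth-moment estimate combined with the Borel--Cantelli lemma. The naive first attempt---bounding the variance and applying Chebyshev's inequality---does not exploit the hypothesis efficiently: independence within the $n$-th row gives $\int \left( \frac{1}{k_n} \sum_{i = 0}^{k_n - 1} (\zeta_{i, n} - m) \right)^2 \mathrm{d} \nu \leq C^2 / k_n$, and summing these tail bounds would require $\sum_n k_n^{-1} < \infty$, which is strictly stronger than the assumed $\sum_n k_n^{-2} < \infty$. The remedy is to pass to the fourth moment, whose decay rate of order $k_n^{-2}$ matches the summability hypothesis exactly.

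First I would reduce to the case $m = 0$ by replacing $\zeta_{i, n}$ with the centered variables $\eta_{i, n} := \zeta_{i, n} - m$. These satisfy $\int \eta_{i, n} \mathrm{d} \nu = 0$ and $\| \eta_{i, n} \|_\infty \leq 2C$ (since $|m| \leq C$), and for each fixed $n$ the family $\{ \eta_{i, n} \}_{i = 0}^{k_n - 1}$ remains mutually independent, because independence is preserved under translation by constants.

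The heart of the argument is the fourth-moment bound. Expanding
$$\int \left( \sum_{i = 0}^{k_n - 1} \eta_{i, n} \right)^4 \mathrm{d} \nu = \sum_{i, j, l, p} \int \eta_{i, n} \eta_{j, n} \eta_{l, n} \eta_{p, n} \, \mathrm{d} \nu ,$$
I would use mutual independence within the $n$-th row together with $\int \eta_{i, n} \mathrm{d} \nu = 0$ to annihilate every term in which some index appears exactly once, since such a term factors with a vanishing factor. Only terms in which each index appears at least twice survive: the "all four equal" terms ($k_n$ of them, each bounded by $(2C)^4$), and the "two pairs" terms ($3 \sum_{i \neq j} \int \eta_{i, n}^2 \mathrm{d} \nu \int \eta_{j, n}^2 \mathrm{d} \nu$, bounded by $3 k_n^2 (2C)^4$). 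Hence $\int \left( \sum_i \eta_{i, n} \right)^4 \mathrm{d} \nu \leq 4 (2C)^4 k_n^2$, so that, writing $S_n = \frac{1}{k_n} \sum_{i = 0}^{k_n - 1} \zeta_{i, n}$, we obtain
$$\int |S_n - m|^4 \, \mathrm{d} \nu \leq \frac{4 (2C)^4}{k_n^2} .$$

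Finally I would apply Markov's inequality to $|S_n - m|^4$: for any $\epsilon > 0$,
$$\nu \big( |S_n - m| \geq \epsilon \big) = \nu \big( |S_n - m|^4 \geq \epsilon^4 \big) \leq \frac{4 (2C)^4}{\epsilon^4 k_n^2} .$$
Summing over $n$ and invoking $\sum_n k_n^{-2} < \infty$ yields $\sum_n \nu(|S_n - m| \geq \epsilon) < \infty$, so Borel--Cantelli gives $\nu(\limsup_n \{ |S_n - m| \geq \epsilon \}) = 0$; that is, almost surely $\limsup_n |S_n - m| \leq \epsilon$. Intersecting the resulting full-measure sets over $\epsilon = 1/j$, $j \in \mathbb{N}$, produces $S_n \to m$ almost surely. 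The main obstacle is entirely conceptual and lives in the moment computation: one must recognize that the second moment is insufficient and that the fourth moment is precisely the object calibrated to $\sum_n k_n^{-2} < \infty$. Once that is seen, the combinatorial bookkeeping of the expansion and the Borel--Cantelli step are routine.
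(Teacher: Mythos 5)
Your proposal is correct and follows essentially the same route as the paper's own proof: center the variables, bound the fourth moment of the row sums by a constant times $k_n^2$ using independence and vanishing of all terms containing a singleton index, then apply Markov's inequality to the fourth power and the Borel--Cantelli lemma, intersecting over $\epsilon = 1/j$. Your version is in fact slightly more careful on two small points---you track the bound $2C$ for the centered variables and avoid the paper's typographical slip in the count $\left(3k_n^2 - 2k_n\right)$ of surviving terms---but these are cosmetic differences, not a different argument.
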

	
	\begin{proof}
		For the sake of brevity, abbreviate
		$$S_n = \sum_{i = 0}^{k_n - 1} \zeta_{i, n},$$
		and assume without loss of generality that $m = 0$ (else, we can just consider $\hat{\zeta}_{i, n} = \zeta_{i, n} - m$). Given $\epsilon > 0$, set
		$$E_{n, \epsilon} = \{ y \in Y : |S_n(y)|/k_n \geq \epsilon \} = \{ y \in Y : |S_n(y)| \geq k_n \epsilon \} .$$
		Then Chebyshev's inequality tells us that
		$$\mu(E_{n, \epsilon}) \leq \frac{1}{(k_n \epsilon)^4} \int S_n^4 \mathrm{d} \nu .$$ Then
		$$\int S_n^4 \mathrm{d} \nu = \sum_{r, s, t, u = 0}^{k_n - 1} \int \zeta_{r, n} \zeta_{s, n} \zeta_{t, n} \zeta_{u, n} \mathrm{d} \nu .$$
		This sum consists of terms of the forms
		\begin{enumerate}
			\item $\int \zeta_{r, n}^4 \mathrm{d} \nu$
			\item $\int \zeta_{r, n}^2 \zeta_{s, n}^2 \mathrm{d} \nu$
			\item $\int \zeta_{r, n}^3 \zeta_{s, n} \mathrm{d} \nu$
			\item $\int \zeta_{r, n}^2 \zeta_{s, n} \zeta_{t, n} \mathrm{d} \nu$
			\item $\int \zeta_{r, n} \zeta_{s, n} \zeta_{t, n} \zeta_{u, n} \mathrm{d} \nu$
		\end{enumerate}
		where $r , s , t , u$ are distinct. We assert that the terms of the third, fourth, and fifth forms all vanish by virtue of independence. This leaves $k_n$ terms of the first form and $3k_n (k_n - 1)$ terms of the second form. Thus there are $k_n + 3 k_n (k_n - 1)$ terms of absolute value $\leq C^4$. Thus
		\begin{align*}
			\int S_n^4 \mathrm{d} \nu & \leq \left( 3k_n^2 - 2 k_n \right)^2 C^4 \\
			& \leq 3 k_n^2 C^4 \\
			\Rightarrow \mu(E_{n, \epsilon}) & \leq \frac{3 k_n^2 C^4}{(k_n \epsilon)^4} \\
			\Rightarrow \sum_{n = 1}^\infty \mu(E_{n, \epsilon})	& \leq \frac{3 C^4}{\epsilon^4}\sum_{n = 1}^\infty k_n^{-2} \\
			& < \infty .
		\end{align*}
		By the Borell-Cantelli Lemma, it follows that $\mu \left( \bigcap_{N = 1}^\infty \bigcup_{n = N}^\infty E_{n, \epsilon} \right) = 0$. But $$\bigcap_{N = 1}^\infty \bigcup_{n = N}^\infty E_{n, \epsilon} = \left\{ y \in Y : \limsup_{n \to \infty} \left| \frac{S_n(y)}{k_n} \right| \geq \epsilon \right\} ,$$
		so we can conclude that
		$$\mu \left( \left\{ y \in Y : \limsup_{n \to \infty} \left| \frac{S_n(y)}{k_n} \right| > 0 \right\} \right) = \mu \left( \bigcup_{K = 1}^\infty \left( \bigcap_{N = 1}^\infty \bigcup_{n = N}^\infty E_{n, \frac{1}{K}} \right) \right) = 0 .$$
		Thus $\frac{S_n}{k_n} \to m$ almost surely.
	\end{proof}
	
	Now we apply this to estimating $$\frac{1}{k} \sum_{i = 0}^{k - 1} \frac{1}{\mu(C_k(x))} \int_{C_k(x)} T^i \chi_{[a_0, a_1, \ldots, a_{\ell - 1}]} .$$ Fix a word $\mathbf{a} = (a_0, a_1, \ldots, a_{\ell - 1}) \in \mathcal{D}^\ell$. We are going to consider a sequence of families of discrete random variables in $X$ given by
	\begin{align*}
		\xi_{i, k}^{\mathbf{a}} (x)	& = \frac{1}{\mu(C_k(x))} \int_{C_k(x)} T^i \chi_{[\mathbf{a}]} \mathrm{d} \mu \\
		& = \frac{\mu \left( C_k(x) \cap T^{-i} [\mathbf{a}] \right)}{\mu(C_k(x))} .	& (0 \leq i \leq k - 1)
	\end{align*}
	Each random variable is bounded in $L^\infty(X, \mu)$ by $1$. We claim that they also have a shared mean $\int \xi_{i, k}^{\mathbf{a}} \mathrm{d} \mu = \mu([\mathbf{a}])$.
	\begin{align*}
		& \int \xi_{i, k}^{\mathbf{a}} \mathrm{d} \mu	\\
		& = \sum_{\vec{d} \in \mathcal{D}^k} \left( \prod_{h = 0}^{k - 1} p(d_h) \right) \alpha_{[d_0, d_1, \ldots, d_{k - 1}]} \left( T^i \chi_{[a_0, a_1, \ldots, a_{\ell - 1}]} \right) \\
		& = \sum_{\vec{d} \in \mathcal{D}^k} \left( \prod_{h = 0}^{k - 1} p(d_h) \right) \frac{1}{\prod_{h = 0}^{k - 1} p(d_h)} \int_{[d_0, d_1, \ldots, d_{k - 1}]} T^i \chi_{[a_0, a_1, \ldots, a_{\ell - 1}]} \mathrm{d} \mu \\
		& = \sum_{\vec{d} \in \mathcal{D}^k} \int_{[d_0, d_1, \ldots , d_{k - 1}]} T^i \chi_{[a_0, a_1, \ldots, a_{\ell - 1}]} \mathrm{d} \mu \\
		& = \sum_{\vec{d} \in \mathcal{D}^k} \int_{[d_0, d_1, \ldots , d_{k - 1}]} \chi_{T^{-i} [a_0, a_1, \ldots, a_{\ell - 1}]} \mathrm{d} \mu \\
		& = \sum_{\vec{d} \in \mathcal{D}^k} \mu \left( [d_0, d_1, \ldots, d_{k - 1}] \cap T^{-i} [a_0, a_1, \ldots, a_{\ell - 1}] \right) \\
		& = \sum_{\vec{d} \in \mathcal{D}^k} \mu \left( [d_0, d_1, \ldots, d_{k - 1}] \cap \bigcup_{c_0, c_1, \ldots, c_{i - 1}} [c_0, c_1, \ldots, c_{i - 1}, a_0, a_1, \ldots, a_{\ell - 1}] \right) \\
		& = \sum_{\vec{d} \in \mathcal{D}^k} \mu \left( [d_0, d_1, \ldots, d_{k - 1}] \cap [d_0, d_1, \ldots, d_{i - 1}, a_0, a_1, \ldots, a_{\ell - 1}] \right)
	\end{align*}
	To compute this value, we look at two cases: where $i + \ell \leq k$, and where $i + \ell \geq k$.
	
	If $i + \ell \leq k$, then
	\begin{align*}
		[d_0, d_1, \ldots, d_{k - 1}] \cap [d_0, d_1, \ldots, d_{i - 1}, a_0, a_1, \ldots, a_{\ell - 1}] \\
		= \begin{cases}
			[d_0, d_1, \ldots, d_{k - 1}]	& d_i = a_0, d_{i + 1} = a_1, \ldots, d_{i + \ell - 1} = a_{\ell - 1} \\
			\emptyset	& \textrm{otherwise}
		\end{cases}
	\end{align*}
	This means that $d_0, d_1, \ldots, d_{i - 1}$, as well as $d_{i + \ell}, \ldots, d_{k - 1}$ are "free". Thus
	\begin{align*}
		\sum_{ \vec{d} \in \mathcal{D}^k} \mu \left( [d_0, d_1, \ldots, d_{k - 1}] \cap [d_0, d_1, \ldots, d_{i - 1}, a_0, a_1, \ldots, a_{\ell - 1}] \right) \\
		= \sum_{ \vec{d} \in \mathcal{D}^l} \mu([d_0, d_1, \ldots, d_{i - 1}, a_0, a_1, \ldots, a_{\ell - 1}, d_{i + \ell}, \ldots, d_{k - 1}]) \\
		= \sum_{ \vec{d} \in \mathcal{D}^k} \left( p(d_0) p(d_1) \cdots p(d_{i - 1}) \right) \left( p(a_0) p(a_1) \cdots p(a_{\ell - 1}) \right) \left( p(d_{i + \ell}) \cdots p(d_{k - 1}) \right) \\
		= \mu([a_0, a_1, \ldots, a_{\ell - 1}]) .
	\end{align*}
	
	On the other hand, if $i + \ell \geq k$, then
	\begin{align*}
		[d_0, d_1, \ldots, d_{k - 1}] \cap [d_0, d_1, \ldots, d_{i - 1}, a_0, a_1, \ldots, a_{\ell - 1}] \\
		= \begin{cases}
			[d_0, d_1, \ldots, d_{i - 1}, a_0, a_1, \ldots, a_{\ell - 1}] & d_i = a_0, \ldots, d_{k - 1} = a_{k - i - 1} \\
			\emptyset	& \textrm{otherwise}
		\end{cases}
	\end{align*}
	leaving $d_0, d_1, \ldots, d_{i - 1}$ "free". Thus
	\begin{align*}
		\sum_{\vec{d} \in \mathcal{D}^k} \mu \left( [d_0, d_1, \ldots, d_{k - 1}] \cap [d_0, d_1, \ldots, d_{i - 1}, a_0, a_1, \ldots, a_{\ell - 1}] \right) \\
		= \sum_{\vec{d} \in \mathcal{D}^k} \mu([d_0, d_1, \ldots, d_{i - 1}, a_0, a_1, \ldots, a_{\ell - 1}]) \\
		= \sum_{\vec{d} \in \mathcal{D}^k} p(d_0) p(d_1) \cdots p(d_{i - 1}) p(a_0) p(a_1) \cdots p(a_{\ell - 1}) \\
		= \mu([a_0, a_1, \ldots, a_{\ell - 1}]) .
	\end{align*}
	
	Thus in either case, we have $\int \xi_{i, k}^{\mathbf{a}} \mathrm{d} \mu = \mu([\mathbf{a}])$.
	
	Now, for fixed $k$, the family $\left\{\xi_{i, k}^{\mathbf{a}} \right\}_{i = 0}^{k - 1}$ is not necessarily independent, but we can break it up into arithmetic subsequences which are. Consider the families $\left\{ \xi_{m \ell + j, k}^\mathbf{a} \right\}_{m = 0}^{\lfloor k / \ell \rfloor - 1}$ for $j \in \{0, 1, \ldots, \ell - 1\}$. Then these subfamilies are independent, so the Even Stronger Law Of Large Numbers tells us that $\frac{1}{\lfloor k / \ell \rfloor} \sum_{m = 0}^{k - 1} \xi_{m \ell + j, k}^\mathbf{a} \to \mu([\mathbf{a}])$ almost surely. Now we calculate
	\begin{align*}
		& \frac{1}{k} \sum_{i = 0}^{k - 1} \frac{1}{\mu(C_k(x))} \int_{C_k(x)} T^i \chi_{[\mathbf{a}]} \mathrm{d} \mu	\\
		& = \frac{1}{k} \sum_{i = 0}^{k - 1} \xi_{i, k}^\mathbf{a}(x)	\\
		& = \frac{\ell \lfloor k / \ell \rfloor}{k} \left[ \frac{1}{\ell \lfloor k / \ell \rfloor} \sum_{i = 0}^{k - 1} \xi_{i, k}^\mathbf{a}(x) \right] \\
		& = \frac{\ell \lfloor k / \ell \rfloor}{k} \left[ \frac{1}{\ell} \sum_{j = 0}^{\ell - 1} \frac{1}{\lfloor k / \ell \rfloor} \sum_{m = 0}^{\lfloor k / \ell \rfloor - 1} \xi_{m \ell + j, k}^\mathbf{a}(x) \right] + \frac{\sum_{i = \ell \lfloor k / \ell \rfloor}^{k - 1} \xi_{i, k}^{\mathbf{a}} (x)}{\ell} \\
		& \stackrel{\textrm{almost surely}}{\to} (1) \left[ \frac{1}{\ell} \sum_{j = 0}^{\ell - 1} \mu([\mathbf{a}]) \right] + 0 \\
		& = \mu([\mathbf{a}]) \\
		& = \int \chi_{[\mathbf{a}]} \mathrm{d} \mu .
	\end{align*}
	Taking a countable intersection over $\mathbf{a} \in \bigcup_{\ell = 1}^\infty \mathcal{D}^\ell$, we can conclude that the set $B$ of all $x \in X$ such that $\frac{1}{k} \sum_{i = 0}^{k - 1} \frac{1}{\mu(C_k(x))} \int_{C_k(x)} T^i \chi_{[\mathbf{a}]} \mathrm{d} \mu \to \int \chi_{[\mathbf{a}]} \mathrm{d} \mu$ for all words $\mathbf{a}$ is of full measure. We can further conclude that if $x \in B$, we have $\frac{1}{\mu(C_k(x))} \int_{C_k(x)} T^i T^n \chi_{[\mathbf{a}]} \mathrm{d} \mu \to \int T^n \chi_{[\mathbf{a}]}$ for all words $\mathbf{a}$ and $n \in \mathbb{Z}$. Since $\operatorname{span} \left\{ T^n \chi_{[\mathbf{a}]} : \mathbf{a} \in \bigcup_{\ell = 1}^\infty \mathcal{D}^\ell, n \in \mathbb{Z} \right\}$ is dense in $C(X)$, we can conclude the following special case of Theorem \ref{Metric result for subshifts}.
	
	\begin{Prop}\label{Bernoulli Prop}
	Let $X = \mathcal{D}^\mathbb{Z}$ be a Bernoulli shift, and let $\mu$ be the associated measure. Endow $X$ with the generator $\mathcal{E} = \{ E_d \}_{d \in \mathcal{D}}$, where $E_d = \{ x \in X : x_0 = d \}$. Then the set of all $x \in X$ such that
	$$\alpha_{C_k(x)} \left( \frac{1}{k} \sum_{i = 0}^{k - 1} T^i f \right) \to \int f \mathrm{d} \mu$$
	for all $f \in C(X)$ is of full measure.
	\end{Prop}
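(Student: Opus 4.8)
The plan is to assemble the probabilistic estimates carried out in the running discussion preceding this statement into a single almost-sure convergence result for cylinder indicators, and then bootstrap to all of $C(X)$ by density. The substantive work is already done above the statement; what remains is bookkeeping plus one standard functional-analytic passage.

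First, I would record that for each fixed word $\mathbf{a} \in \bigcup_{\ell = 1}^\infty \mathcal{D}^\ell$, the preceding analysis — combining the Even Stronger Law of Large Numbers with the computed mean $\int \xi_{i,k}^{\mathbf{a}}\, \mathrm{d}\mu = \mu([\mathbf{a}])$ and the decomposition of $\{\xi_{i,k}^{\mathbf{a}}\}_{i=0}^{k-1}$ into $\ell$ mutually independent arithmetic subfamilies — shows that the set $B_{\mathbf{a}}$ of $x \in X$ for which $\frac{1}{k}\sum_{i=0}^{k-1}\alpha_{C_k(x)}(T^i \chi_{[\mathbf{a}]}) \to \mu([\mathbf{a}])$ has full measure. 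Since $\bigcup_{\ell=1}^\infty \mathcal{D}^\ell$ is countable, $B = \bigcap_{\mathbf{a}} B_{\mathbf{a}}$ is again of full measure, and on $B$ the convergence holds simultaneously for every cylinder indicator.

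Second, I would extend this to the shifted indicators $T^n \chi_{[\mathbf{a}]}$, $n \in \mathbb{Z}$. For $x \in B$, the telescoping estimate
$$\left| \frac{1}{k}\sum_{i=0}^{k-1}\alpha_{C_k(x)}\bigl(T^i(Tf)\bigr) - \frac{1}{k}\sum_{i=0}^{k-1}\alpha_{C_k(x)}\bigl(T^i f\bigr)\right| = \frac{1}{k}\left|\alpha_{C_k(x)}\bigl(T^k f - f\bigr)\right| \le \frac{2}{k}\|f\|_\infty \to 0$$
(exactly as in the proof of Theorem \ref{Metric result for subshifts}) shows that convergence for $f$ transfers to $Tf$, and symmetrically to $T^{-1}f$; an induction then covers all $T^n \chi_{[\mathbf{a}]}$, whose limit is $\int T^n \chi_{[\mathbf{a}]}\,\mathrm{d}\mu = \int \chi_{[\mathbf{a}]}\,\mathrm{d}\mu$ by $T$-invariance of $\mu$. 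By linearity of each functional $f \mapsto \frac{1}{k}\sum_{i=0}^{k-1}\alpha_{C_k(x)}(T^i f)$, convergence then holds for every element of $\operatorname{span}\{T^n \chi_{[\mathbf{a}]}\}$.

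Finally, I would pass to all of $C(X)$ by an $\epsilon/3$ argument. Lemma \ref{Cylinders dense} guarantees that this span is uniformly dense in $C(X)$. The crucial point — and the only place in the assembly requiring genuine care — is that each averaging functional $f \mapsto \frac{1}{k}\sum_{i=0}^{k-1}\alpha_{C_k(x)}(T^i f)$ is a convex combination of the states $\alpha_{C_k(x)}\circ T^i$, hence is itself a state on $C(X)$ of norm $1$, uniformly in $k$. Thus, given $f \in C(X)$ and $\epsilon > 0$, choosing $g$ in the span with $\|f - g\|_\infty < \epsilon/3$ simultaneously controls the approximation error through each functional and the gap between $\int f\,\mathrm{d}\mu$ and $\int g\,\mathrm{d}\mu$, leaving only the convergence for $g$, which was just established. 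I expect this uniform-boundedness-plus-density step to be the only subtlety within the proof of the proposition itself, precisely because the harder probabilistic estimates (the fourth-moment bound and the independence decomposition) are already in hand from the discussion above.
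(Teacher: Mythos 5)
Your proposal is correct and follows essentially the same route as the paper: the paper's ``proof'' of Proposition \ref{Bernoulli Prop} is precisely the preceding discussion (Even Stronger Law of Large Numbers applied to the arithmetic subfamilies $\{\xi_{m\ell+j,k}^{\mathbf{a}}\}$, the mean computation $\int \xi_{i,k}^{\mathbf{a}}\,\mathrm{d}\mu = \mu([\mathbf{a}])$, a countable intersection over words, the telescoping extension to $T^n\chi_{[\mathbf{a}]}$, and density of the span in $C(X)$), which you assemble in the same order. Your only additions --- spelling out the telescoping bound and the norm-one-state $\epsilon/3$ argument --- are exactly the steps the paper delegates to the proofs of Theorems \ref{Metric result for subshifts} and \ref{Non-expansinve random cylinders}, so there is no substantive divergence.
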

	
	However, this technique lends itself to another result that is not encompassed by Theorem \ref{Metric result for subshifts}. We have looked at spatial-temporal differentiation problems where we are differentiating with respect to the cylinders $C_k(x)$ of a randomly chosen $x \in X$. The next result considers instead the situation where we randomly choose a \emph{sequence} $(x_k)_{k = 1}^\infty$ in $X$ and differentiating with respect to the sequence $(C_k(x_k))_{k = 1}^\infty$. 
	
	\begin{Thm}
		Let $X = \mathcal{D}^\mathbb{Z}$ be a Bernoulli shift, and let $\mu$ be the associated measure. Endow $X$ with the generator $\mathcal{E} = \{ E_d \}_{d \in \mathcal{D}}$, where $E_d = \{ x \in X : x_0 = d \}$. Consider the countably infinite product probability space $\left(X^\infty, \mathcal{B}^\infty, \mu^\infty \right) = \prod_{k \in \mathbb{N}} (X, \mathcal{B}, \mu)$. Then the set of all $(x_k)_{k = 1}^\infty \in X^\infty$ such that
		$$\alpha_{C_k(x_k)} \left( \frac{1}{k} \sum_{i = 0}^{k - 1} T^i f \right) \to \int f \mathrm{d} \mu$$
		for all $f \in C(X)$ is of full $\mu^\infty$-measure.
	\end{Thm}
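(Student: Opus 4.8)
The plan is to exploit that $\alpha_{C_k(x_k)}$ depends on the sample $x_k = (x_{k,j})_{j \in \mathbb{Z}}$ only through the coordinates $x_{k,0}, \ldots, x_{k,k-1}$, so that for distinct values of $k$ the quantities being averaged live on distinct, independent factors of the product space $\left( X^\infty, \mathcal{B}^\infty, \mu^\infty \right)$. Fix a word $\mathbf{a} = (a_0, \ldots, a_{\ell - 1}) \in \mathcal{D}^\ell$ and define on $X^\infty$ the random variable
$$Z_k^{\mathbf{a}}\big( (x_j)_{j \in \mathbb{N}} \big) = \frac{1}{k} \sum_{i = 0}^{k - 1} \alpha_{C_k(x_k)} \left( T^i \chi_{[\mathbf{a}]} \right) = \frac{1}{k} \sum_{i = 0}^{k - 1} \xi_{i, k}^{\mathbf{a}}(x_k),$$
where $\xi_{i, k}^{\mathbf{a}}$ is exactly the random variable on $X$ introduced before Proposition \ref{Bernoulli Prop}. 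Since $Z_k^{\mathbf{a}}$ is a function of the single coordinate $x_k$, whose $\mu^\infty$-marginal is $\mu$, its law under $\mu^\infty$ coincides with the law of $\frac{1}{k} \sum_i \xi_{i, k}^{\mathbf{a}}$ under $\mu$; in particular $\int Z_k^{\mathbf{a}} \mathrm{d} \mu^\infty = \mu([\mathbf{a}])$ and $0 \le Z_k^{\mathbf{a}} \le 1$, using the mean computation already carried out.

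The heart of the argument is a summable tail bound for each fixed $\mathbf{a}$. First I would establish
$$\int \left( Z_k^{\mathbf{a}} - \mu([\mathbf{a}]) \right)^4 \mathrm{d} \mu^\infty = O\!\left( k^{-2} \right),$$
with the implied constant depending only on $\ell$. To get this, split $\{ 0, 1, \ldots, k - 1 \}$ into the $\ell$ residue classes modulo $\ell$; within a fixed $k$, each subfamily $\{ \xi_{m \ell + j, k}^{\mathbf{a}} \}_m$ is mutually independent with mean $\mu([\mathbf{a}])$ and is bounded by $1$, exactly as in the proof of Proposition \ref{Bernoulli Prop}. Applying the fourth-moment estimate from the proof of the Even Stronger Law of Large Numbers to the centered sum over each residue class bounds its fourth moment by $3 \lceil k / \ell \rceil^2$, and Minkowski's inequality in $L^4$ combines the $\ell$ classes, yielding $\left\| Z_k^{\mathbf{a}} - \mu([\mathbf{a}]) \right\|_4 = O(k^{-1/2})$ after dividing by $k$. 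Chebyshev's inequality then gives $\mu^\infty\!\left( \left| Z_k^{\mathbf{a}} - \mu([\mathbf{a}]) \right| \ge \epsilon \right) = O(\epsilon^{-4} k^{-2})$, which is summable in $k$, so the Borel--Cantelli Lemma yields $Z_k^{\mathbf{a}} \to \mu([\mathbf{a}])$ almost surely.

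With this in hand, I would intersect over the countable collection of words $\mathbf{a} \in \bigcup_{\ell = 1}^\infty \mathcal{D}^\ell$ to obtain a set $B \subseteq X^\infty$ of full $\mu^\infty$-measure on which $Z_k^{\mathbf{a}} \to \mu([\mathbf{a}])$ for every $\mathbf{a}$ simultaneously, discarding also the null set on which some $\mu(C_k(x_k)) = 0$. To upgrade from cylinders $\chi_{[\mathbf{a}]}$ to their shifts $T^n \chi_{[\mathbf{a}]}$, I would reuse the telescoping identity from Theorem \ref{Metric result for subshifts}: for each fixed $k$ the set $C_k(x_k)$ does not change as $i$ ranges over $\{ 0, \ldots, k - 1 \}$, so
$$\left| \frac{1}{k} \sum_{i = 0}^{k - 1} \alpha_{C_k(x_k)}\!\left( T^i (T^{\pm 1} f) \right) - \frac{1}{k} \sum_{i = 0}^{k - 1} \alpha_{C_k(x_k)}\!\left( T^i f \right) \right| \le \frac{2}{k} \| f \|_\infty \to 0,$$
and induction extends this to all $n \in \mathbb{Z}$. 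Finally, since $\operatorname{span} \{ T^n \chi_{[\mathbf{a}]} \}$ is dense in $C(X)$, the standard $\epsilon/3$-approximation argument used throughout the paper extends almost-sure convergence to every $f \in C(X)$ on the same full-measure set $B$.

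The step I expect to require the most care is the fourth-moment bound, specifically the bookkeeping of the $\ell$ residue classes whose sizes differ by one and their combination via Minkowski to produce a bound summable in $k$. Conceptually, the point worth flagging is that, although this result concerns a sequence of independent samples $(x_k)$ and so looks stronger than the single-sample Proposition \ref{Bernoulli Prop}, the only probabilistic input needed is the summability of the marginal tail probabilities $\mu\!\left( \left| Z_k^{\mathbf{a}} - \mu([\mathbf{a}]) \right| \ge \epsilon \right)$: since $Z_k^{\mathbf{a}}$ depends only on $x_k$, the first Borel--Cantelli lemma converts summability directly into an almost-sure statement on the product space, without appealing to independence across $k$. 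This is exactly why the quantitative fourth-moment estimate, rather than mere almost-sure convergence for a single sample, is the right tool.
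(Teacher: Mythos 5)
Your proposal is correct and takes essentially the same route as the paper's proof: the same random variables (functions of the single coordinate $x_k$, so that the single-sample mean computation carries over), the same decomposition into residue classes modulo $\ell$ to obtain independence, the same fourth-moment/Chebyshev/Borel--Cantelli machinery, and the same extension first to the shifts $T^n \chi_{[\mathbf{a}]}$ via the telescoping bound and then to all of $C(X)$ by density. The only cosmetic difference is that you inline the fourth-moment estimate and combine the $\ell$ residue classes with Minkowski's inequality before applying Chebyshev and Borel--Cantelli once, whereas the paper invokes its Even Stronger Law of Large Numbers as a black box on each residue class separately and then combines the resulting almost-sure limits.
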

	
	\begin{proof}
		Our method is very similar to the method used for Proposition \ref{Bernoulli Prop}. Let $\mathbf{x} = (x_k)_{k = 1}^\infty \in X^\infty$ denote a sequence in $X$.
		
		Fix a word $\mathbf{a} = (a_0, a_1, \ldots, a_{\ell - 1}) \in \mathcal{D}^\ell$. We are going to consider a sequence of families of discrete random variables in $X$ given by
		\begin{align*}
			\zeta_{i, k}^{\mathbf{a}} (\mathbf{x})	& = \frac{1}{\mu(C_k(x_k))} \int_{C_k(x_k)} T^i \chi_{[\mathbf{a}]} \mathrm{d} \mu \\
			& = \frac{\mu \left( C_k(x_k) \cap [\mathbf{a}] \right)}{\mu(C_k(x_k))} .	& (0 \leq i \leq k - 1)
		\end{align*}
		Each random variable $\zeta_{i, k}^{\mathbf{a}}$ is bounded in $L^\infty(X, \mu)$ by $1$. By a calculation identical to the one used to prove Proposition \ref{Bernoulli Prop}, we can conclude that $\int \zeta_{i, k}^{\mathbf{a}} \mathrm{d} \mu = \mu([\mathbf{a}])$.
		
		As before, for fixed $k$, the family $\left\{\zeta_{i, k}^{\mathbf{a}} \right\}_{i = 0}^{k - 1}$ is not necessarily independent, but we can break it up into arithmetic subsequences which are. Consider the families $\left\{ \zeta_{m \ell + j, k}^\mathbf{a} \right\}_{m = 0}^{\lfloor k / \ell \rfloor - 1}$ for $j \in \{0, 1, \ldots, \ell - 1\}$. Then these families are independent, and so the Even Stronger Law Of Large Numbers tells us that $\frac{1}{\lfloor k / \ell \rfloor} \sum_{m = 0}^{k - 1} \zeta_{m \ell + j, k}^\mathbf{a} \to \mu([\mathbf{a}])$ almost surely. Now we calculate
		\begin{align*}
			& \frac{1}{k} \sum_{i = 0}^{k - 1} \frac{1}{\mu(C_k(x_k))} \int_{C_k(x_k)} T^i \chi_{[\mathbf{a}]} \mathrm{d} \mu	\\
			& = \frac{1}{k} \sum_{i = 0}^{k - 1} \zeta_{i, k}^\mathbf{a}(\mathbf{x})	\\
			& = \frac{\ell \lfloor k / \ell \rfloor}{k} \left[ \frac{1}{\ell \lfloor k / \ell \rfloor} \sum_{i = 0}^{k - 1} \zeta_{i, k}^\mathbf{a}(\mathbf{x}) \right] \\
			& = \frac{\ell \lfloor k / \ell \rfloor}{k} \left[ \frac{1}{\ell} \sum_{j = 0}^{\ell - 1} \frac{1}{\lfloor k / \ell \rfloor} \sum_{m = 0}^{\lfloor k / \ell \rfloor - 1} \zeta_{m \ell + j, k}^\mathbf{a}(\mathbf{x}) \right] + \frac{\sum_{i = \ell \lfloor k / \ell \rfloor}^{k - 1} \zeta_{i, k}^{\mathbf{a}} (\mathbf{x})}{\ell} \\
			& \stackrel{\textrm{almost surely}}{\to} (1) \left[ \frac{1}{\ell} \sum_{j = 0}^{\ell - 1} \mu([\mathbf{a}]) \right] + 0 \\
			& = \mu([\mathbf{a}]) \\
			& = \int \chi_{[\mathbf{a}]} \mathrm{d} \mu .
		\end{align*}
		Again, taking a countable intersection over $\mathbf{a} \in \bigcup_{\ell = 1}^\infty \mathcal{D}^\ell$, we can conclude that the set $B$ of all $\mathbf{x} \in X^\infty$ such that $\frac{1}{k} \sum_{i = 0}^{k - 1} \frac{1}{\mu(C_k(x_k))} \int_{C_k(x_k)} T^i \chi_{[\mathbf{a}]} \mathrm{d} \mu \to \int \chi_{[\mathbf{a}]} \mathrm{d} \mu$ for all words $\mathbf{a}$ is of full measure. We can further conclude that if $\mathbf{x} \in B$, we have $\frac{1}{\mu(C_k(x_k))} \int_{C_k(x_k)} T^i T^n \chi_{[\mathbf{a}]} \mathrm{d} \mu \to \int T^n \chi_{[\mathbf{a}]}$ for all words $\mathbf{a}$ and $n \in \mathbb{Z}$. Since $\operatorname{span} \left\{ T^n \chi_{[\mathbf{a}]} : \mathbf{a} \in \bigcup_{\ell = 1}^\infty \mathcal{D}^\ell, n \in \mathbb{Z} \right\}$ is dense in $C(X)$, we can conclude that if $\mathbf{x} \in B$, then
		$$\alpha_{C_k(x_k)} \left( \frac{1}{k} \sum_{i = 0}^{k - 1} T^i f \right) \to \int f \mathrm{d} \mu$$
		for all $f \in C(X)$.
	\end{proof}
	\bibliography{Bibliography}

@article{Goodman71,
	title={Relating topological entropy and measure entropy},
	author={Goodman, Tim NT},
	journal={Bulletin of the London Mathematical Society},
	volume={3},
	number={2},
	pages={176--180},
	year={1971},
	publisher={Citeseer}
}

@book{FollandReal,
	title={Real analysis: modern techniques and their applications},
	author={Folland, Gerald B},
	volume={40},
	year={1999},
	publisher={John Wiley \& Sons}
}

@article{Jewett,
	title={The prevalence of uniquely ergodic systems},
	author={Jewett, Robert I},
	journal={Journal of Mathematics and Mechanics},
	volume={19},
	number={8},
	pages={717--729},
	year={1970},
	publisher={JSTOR}
}

@article{KriegerGenerator,
	title={On entropy and generators of measure-preserving transformations},
	author={Krieger, Wolfgang},
	journal={Transactions of the American Mathematical Society},
	volume={149},
	number={2},
	pages={453--464},
	year={1970}
}

@book{Walters,
	title={Ergodic theory—introductory lectures},
	author={Walters, Peter},
	volume={458},
	year={2007},
	publisher={Springer}
}

@book{EisnerOperators,
	title={Operator theoretic aspects of ergodic theory},
	author={Eisner, Tanja and Farkas, B{\'a}lint and Haase, Markus and Nagel, Rainer},
	volume={272},
	year={2015},
	publisher={Springer}
}

@article{BellowFurstenberg,
	title={An application of number theory to ergodic theory and the construction of uniquely ergodic models},
	author={Bellow, Alexandra and Furstenberg, Hillel},
	journal={Israel Journal of Mathematics},
	volume={33},
	number={3-4},
	pages={231--240},
	year={1979},
	publisher={Springer}
}

@article{Jenkinson,
	title={Ergodic optimization},
	author={Jenkinson, Oliver},
	journal={Discrete \& Continuous Dynamical Systems-A},
	volume={15},
	number={1},
	pages={197},
	year={2006},
	publisher={American Institute of Mathematical Sciences}
}

@book{D&K,
	title={Ergodic theory of numbers},
	author={Dajani, Karma and Kraaikamp, Cor},
	number={29},
	year={2002},
	publisher={Cambridge University Press}
}

@article{HanselRaoult,
	title={Ergodicity, uniformity and unique ergodicity},
	author={Hansel, G and Raoult, JP and Rosenblatt, M},
	journal={Indiana University Mathematics Journal},
	volume={23},
	number={3},
	pages={221--237},
	year={1973},
	publisher={JSTOR}
}

@article{Ormes,
	title={Strong orbit realization for minimal homeomorphisms},
	author={Ormes, Nicholas S},
	journal={Journal d’Analyse Math{\'e}matique},
	volume={71},
	number={1},
	pages={103--133},
	year={1997},
	publisher={Springer}
}
\end{document}